\renewcommand*\libertine@figurestyle{LF}
\renewcommand*\libertine@figurestyle{OsF}
\def\csname ver@etex.sty\endcsname{3000/12/31}
\theoremstyle{plain}                          
\newtheorem{theorem}{Theorem}[section]
\newtheorem{proposition}[theorem]{Proposition}    
\newtheorem{corollary}[theorem]{Corollary}
\newtheorem{conjecture}[theorem]{Conjecture} 
\theoremstyle{definition}
\newtheorem{definition}[theorem]{Definition}
\theoremstyle{remark}
\newtheorem{remark}[theorem]{Remark}
\def\CP{\mathbb{C}\mathrm{P}^1}
\newcommand{\mb}[1]{\mathbb{#1}} 
\newcommand{\mc}[1]{\mathcal{#1}}
\newcommand{\cW}{\mathcal{W}} 
\newcommand{\N}{\mb{N}} 
\newcommand{\C}{\mb{C}} 
\newcommand{\Z}{\mb{Z}}
\def\oM{\overline{\mathcal{M}}}
\def\ch{\mathrm{ch}}
\def\Chiodo{\mathsf{C}}
\DeclareMathOperator*{\Res}{Res}
\DeclareMathOperator{\ev}{ev}
\DeclareMathOperator{\restr}{\Big\rfloor}
\newcommand{\Hol}{\mathord{\textup{Hol}}}
\newcommand{\cord}[1]{\bigg{\langle} \, #1 \, \bigg{\rangle}}
\newcommand{\p}{\mathbf{p}}
\numberwithin{equation}{section}
\begin{document}
\title{Loop equations and a proof of Zvonkine's $qr$-ELSV formula}

\author[P.~Dunin-Barkowski]{P.~Dunin-Barkowski}

\address[P.~Dunin-Barkowski]{Faculty of Mathematics, National Research University Higher School of Economics, Usacheva 6, 119048 Moscow, Russia; and ITEP, 117218 Moscow, Russia}
\email{ptdunin@hse.ru}

\author[R.~Kramer]{R.~Kramer}

\address[R.~Kramer]{Max-Planck-Institut f\"ur Mathematik, 
	Vivatsgasse 7,
	53111 Bonn, Germany}
\email{rkramer@mpim-bonn.mpg.de}

\author[A.~Popolitov]{A.~Popolitov}

\address[A.~Popolitov]{Department of Physics and Astronomy, Uppsala University, Uppsala, Sweden;
  Moscow Institute for Physics and Technology, Dolgoprudny, Russia;
  Institute for Information Transmission Problems, Moscow 127994, Russia;
  and ITEP, Moscow 117218, Russia}
\email{a.popolitov@physics.uu.se}

\author[S.~Shadrin]{S.~Shadrin}

\address[S. Shadrin]{Korteweg-de Vriesinstituut voor Wiskunde, 
	Universiteit van Amsterdam, Postbus 94248,
	1090GE Amsterdam, Netherlands}
\email{s.shadrin@uva.nl}

\begin{abstract}
	We prove the 2006 Zvonkine conjecture~\cite{Zvonkine2006} that expresses Hurwitz numbers with completed cycles in terms of intersection numbers with the Chiodo classes~\cite{Chio08} via the so-called $r$-ELSV formula, as well as its orbifold generalization, the $qr$-ELSV formula, proposed recently in~\cite{KLPS17}.  
\end{abstract}

\maketitle

\tableofcontents

\section{Introduction}


This paper is concerned with spin Hurwitz numbers, which have been conjectured by Zvon\-{}kine~\cite{Zvonkine2006} to be expressable as integrals over the moduli space of curves, in a generalized ELSV formula, called Zvonkine's $r$-ELSV formula. In~\cite{KLPS17}, the authors conjectured an orbifold generalization of this formula, called Zvonkine's $qr$-ELSV formula. In this paper, we prove the latter, and hence also the former, formula, via topological recursion and quadratic loop equations. We will introduce all of these concepts in this introduction.

\subsection{\texorpdfstring{$q$}{q}-orbifold \texorpdfstring{$r$}{r}-spin Hurwitz numbers}
In this section we introduce the $q$-orbifold $r$-spin Hurwitz numbers, following \cite{OkPa06a,Zvonkine2006,SSZ12,SSZ,KLPS17}. They are a very important and natural type of Hurwitz numbers; more precisely, they are a special case of completed Hurwitz numbers. Completed Hurwitz numbers were introduced by Okounkov and Pandharipande in \cite{OkPa06a} to establish a relation between Hurwitz numbers and relative Gromov-Witten invariants; in this section we recall their result specified for the $q$-orbifold $r$-spin case.

\subsubsection{Completed cycles}

A {\em partition}~$\lambda$ of an integer~$d$ is a non-increas\-ing
finite sequence $\lambda_1 \geq \dots \geq \lambda_l$ 
such that $\sum \lambda_i =d$. 

It is known that the irreducible representations $\rho_\lambda$
of the symmetric group $S_d$ are in a natural one-to-one
correspondence with the partitions~$\lambda$ of~$d$. 
On the other hand, to a partition~$\lambda$
of~$d$ one can assign a central element $C_{p,\lambda}$ 
of the group algebra $\C S_p$ for any positive integer~$p$. 
The coefficient of a given permutation $\sigma \in S_p$ in
$C_{p,\lambda}$ is defined as the number of ways to choose
and number $l$ cycles of $\sigma$ so that their lengths are 
$\lambda_1, \dots, \lambda_l$, and the remaining $p-d$ elements are fixed points of~$\sigma$. Thus the coefficient of $\sigma$ vanishes unless its cycle lengths are $\lambda_1, \dots, \lambda_l, 1, \dots, 1$. In particular, $C_{p, \lambda} = 0$ if $p < d$. Thus $C_{p, \lambda}$ is
{\em the sum of permutations with $l$ numbered cycles
	of lengths $\lambda_1, \dots, \lambda_l$ and any number of non-numbered fixed points}.

The collection of elements $C_{p, \lambda}$ for $p = 1, 2, \dots$
is called a {\em stable center element}
$C_\lambda$. 
For example, the stable element $C_2$ is
the sum of all transpositions in $\C S_p$, which is well-defined for each~$p$, and in particular equals zero for $p=1$.

Let~$\lambda$ be a partition of~$d$ and~$\mu$ a partition of~$p$. Since $C_{p,\lambda}$ lies in the center of $\C S_p$, 
it is represented by a scalar (multiplication by a constant) 
in the representation~$\rho_\mu$ of~$S_p$. Denote this scalar
by $f_\lambda(\mu)$. Thus to a stable center element $C_\lambda$ we have assigned a function $f_\lambda$
defined on the set of all partitions, $\mc{P}$. We are interested in the
vector space spanned by the functions~$f_\lambda$.

To study this space, one defines some new functions 
on the set of partitions as follows:
\begin{equation}
\p_{r+1}(\mu) = \frac1{r+1} \sum_{i \geq 1} 
\left[(\mu_i - i + \frac12)^{r+1} - (-i + \frac12)^{r+1} \right]
\qquad (r \geq 0).
\end{equation}
(The standard definition
	\cite[p.11]{OkPa06a}
	involves certain additive constants that we have dropped to
	simplify the expression, since these constants play no role in this paper.)

\begin{theorem}[Kerov, Olshansky \cite{KerovOlshanski1994}]
	The vector space spanned by the functions $f_\lambda$ coincides with the algebra generated by the functions $\p_1, \p_2, \dots$.
\end{theorem}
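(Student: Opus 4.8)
The plan is to realise both sides of the asserted equality as subspaces of one commutative algebra and to compare them by a triangularity argument. Let $\Lambda^{*}$ be the algebra of \emph{shifted symmetric functions}: those $g\colon\mc P\to\C$ which, after padding a partition by zeros and passing to the variables $x_{i}=\mu_{i}-i+\tfrac12$, restrict for each $n$ to a symmetric polynomial in $x_{1},\dots,x_{n}$, consistently as $n$ grows. The shifted power sums $\p_{1},\p_{2},\dots$ lie in $\Lambda^{*}$, and it is standard that they are algebraically independent and generate it, so $\Lambda^{*}=\C[\p_{1},\p_{2},\dots]$ with vector-space basis $\{\p_{\nu}:=\prod_{i}\p_{\nu_{i}}\}_{\nu\in\mc P}$. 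Filtering $\Lambda^{*}$ by the weight $\mathrm{wt}\,\p_{k}=k$, the associated graded is the ordinary ring $\Lambda$ of symmetric functions, the leading term $\mathrm{gr}\,\p_{k}$ being the nonzero multiple $\tfrac1k p_{k}$ of the $k$-th power sum. In these terms the theorem follows from two facts: \emph{(a)} every $f_{\lambda}$ lies in $\Lambda^{*}$; and \emph{(b)} $f_{\lambda}=c_{\lambda}\p_{\lambda}+(\text{terms of weight}<|\lambda|)$ for some $c_{\lambda}\ne 0$. Indeed, \emph{(a)} and \emph{(b)} make the change of basis between $\{f_{\lambda}\}_{\lambda\in\mc P}$ and $\{\p_{\lambda}\}_{\lambda\in\mc P}$ triangular and invertible for any total order on $\mc P$ refining the weight, so the two families span the same subspace of $\Lambda^{*}$; as $\{\p_{\nu}\}$ is a basis of $\C[\p_{1},\p_{2},\dots]$, that subspace is precisely the algebra generated by the $\p_{k}$.

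To prove \emph{(a)}, I would first unwind the definition. The element $C_{p,\lambda}$ is supported on the single conjugacy class of $S_{p}$ of cycle type $\lambda\cup 1^{p-|\lambda|}$, with each such permutation occurring with a combinatorial multiplicity; multiplying this multiplicity by the size of the conjugacy class produces a polynomial in $p$ of degree $|\lambda|$ (up to a constant it is $p(p-1)\cdots(p-|\lambda|+1)$), so Schur's lemma gives
\[
f_{\lambda}(\mu)=P_{\lambda}(|\mu|)\,\frac{\chi^{\mu}\!\left(\lambda\cup 1^{|\mu|-|\lambda|}\right)}{\dim\rho_{\mu}},\qquad\deg P_{\lambda}=|\lambda|.
\]
Thus \emph{(a)} says exactly that the normalised irreducible characters of the symmetric groups, multiplied by this falling factorial, assemble into a shifted symmetric function of $\mu$. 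I would deduce this from the Jucys--Murphy elements $X_{1},\dots,X_{p}\in\C S_{p}$: their joint spectrum on the Gelfand--Tsetlin basis of $\rho_{\mu}$ is the multiset of contents of the boxes of $\mu$, and since symmetric polynomials in the $X_{i}$ exhaust $Z(\C S_{p})$, every class sum acts on $\rho_{\mu}$ through a symmetric function of those contents; checking that the expression for a fixed partial cycle type stabilises in $p$ then shows that $f_{\lambda}$ is a fixed polynomial in $|\mu|$ and in the power sums $\sum_{s\in\mu}c(s)^{k}$ of the contents, each of which is shifted symmetric --- for $k=1$ this is the elementary identity $\sum_{s\in\mu}c(s)=\sum_{i}\bigl(\tfrac12\mu_{i}^{2}-(i-\tfrac12)\mu_{i}\bigr)=\p_{2}(\mu)$. (Alternatively, one may quote a Frobenius-type formula rendering $P_{\lambda}(|\mu|)\,\chi^{\mu}(\lambda\cup 1^{\cdots})/\dim\rho_{\mu}$ directly as a shifted symmetric function in the Frobenius coordinates of $\mu$.)

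For \emph{(b)} one reads off the leading term by rescaling the diagram: if $t\mu$ denotes the partition with all parts multiplied by $t$, then $P_{\lambda}(t|\mu|)\sim c\,t^{|\lambda|}|\mu|^{|\lambda|}$ with $c>0$, while the normalised character factorises asymptotically as the product over $i$ of the corresponding single-cycle quantities for $(\lambda_{i})$; this identifies $\mathrm{gr}(f_{\lambda})$ with a nonzero multiple of $p_{\lambda}$, which is \emph{(b)}. The base cases are immediate: $f_{(1)}(\mu)=|\mu|=\p_{1}(\mu)$, and $f_{(2)}(\mu)$ is the scalar by which the sum of all transpositions acts on $\rho_{\mu}$, i.e.\ the content sum $\p_{2}(\mu)$; parts of $\lambda$ equal to $1$ contribute only extra factors $(\p_{1}-\mathrm{const})$ to $f_{\lambda}$. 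Combining \emph{(a)} and \emph{(b)} proves the theorem.

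The one real obstacle is \emph{(a)} --- the polynomiality, indeed shifted symmetry, of the normalised symmetric-group characters as functions of the Young diagram (equivalently, the inclusion $\operatorname{span}\{f_{\lambda}\}\subseteq\C[\p_{1},\p_{2},\dots]$). The remaining ingredients are formal (the triangularity) or a routine computation with the content (or Frobenius) formula (the leading term in \emph{(b)}).
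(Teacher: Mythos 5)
The paper does not prove this statement: it is quoted as an external result of Kerov and Olshansky \cite{KerovOlshanski1994}, so there is no in-paper argument to compare yours against. What you have written is, in essence, the standard proof from that literature (Kerov--Olshansky, Okounkov--Olshansky): embed both families into the algebra $\Lambda^{*}$ of shifted symmetric functions, filter by weight so that $\mathrm{gr}\,\Lambda^{*}$ is the ordinary ring of symmetric functions with $\p_{k}\mapsto \tfrac1k p_{k}$, and conclude by triangularity once one knows (a) each $f_{\lambda}$ is shifted symmetric and (b) its top-weight term is a nonzero multiple of $\p_{\lambda}$. Your identification of (a) as the only substantive point is accurate, and the two routes you indicate for it (symmetric functions of Jucys--Murphy elements acting through content power sums, with a stabilization-in-$p$ argument; or the Frobenius-coordinate character formula) are both legitimate and standard --- though note that the stabilization step is itself a nontrivial theorem (Farahat--Higman / Ivanov--Kerov partial permutations), not merely a "check", so as written (a) remains a sketch rather than a proof. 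The computations you do carry out ($f_{(1)}=\p_{1}$, $f_{(2)}=\p_{2}$ via the content sum, the degree count for the falling-factorial prefactor, and the asymptotic factorization giving the leading term in (b)) are correct. In short: correct in outline, consistent with the cited source, with the hard lemma appropriately flagged but not fully established.
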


As a corollary, to each stable center element $C_\lambda$ we can assign a polynomial in $\p_1, \p_2, \dots$ and, conversely, each $\p_{r+1}$ corresponds to a linear combination of stable center elements $C_\lambda$.

\begin{definition} \label{Def:CompCyc}
	The linear combination of stable center elements corresponding
	to $\p_{r+1}$ is called the {\em completed $(r+1)$-cycle} and denoted
	by~$\overline{C}_{r+1}$.
\end{definition}

The first completed cycles are:
\begin{align}
\overline{C}_1&= C_1,\\ \nonumber
\overline{C}_2&= C_2,\\ \nonumber
\overline{C}_3&= C_3 + C_{1,1} + \frac1{12} C_1,\\ \nonumber
\overline{C}_4&= C_4 + 2 C_{2,1} + \frac54 C_2,\\ \nonumber
\overline{C}_5&= C_5 + 3 C_{3,1} + 4 C_{2,2} + \frac{11}3 C_3
+ 4 C_{1,1,1} + \frac32 C_{1,1} + \frac1{80} C_1.
\end{align}

We say that a stable center element $C_\lambda$ involved in the completed cycle
$\overline{C}_{r+1}$ has {\em genus defect} $[r+2 - \sum (\lambda_i + 1)]/2$.

\subsubsection{$r$-spin Hurwitz numbers}
Let $g\in\mathbb{Z}_{\geq 0}$ and $r\in\mathbb{Z}_{\geq 1}$. Let $\vec{\mu}=(\mu_1, \dots, \mu_n)$ be an integer partition of length $n=\ell(\mu)$ such that $m \coloneqq (\sum_{i=1}^n \mu_i + n + 2g-2)/r$ is an integer, and let $d \coloneqq |\mu|= \sum_{i=1}^n \mu_i$. 

Recall that the completed $(r+1)$-cycle can be considered as a central element of the group algebra $\C S_d$. An {\em $r$-factorization of type $(\mu_1, \dots, \mu_n)$} in the symmetric group $S_d$ is a  factorization
\begin{equation}
\sigma_1 \dots \sigma_m = \sigma
\end{equation}
such that
\begin{itemize}
\item[(i)] the cycle lengths of~$\sigma$ equal $\mu_1, \dots, \mu_n$ and
\item[(ii)] each permutation $\sigma_i$ enters the completed $(r+1)$-cycle
with a nonzero coefficient.
\end{itemize}
The product of these coefficients for $i$ going from $1$ to~$m$ is called the {\em weight} of the $r$-factorization.

Choose $m$ points $y_1, \dots, y_m \in \C$ and a system of $m$ loops $s_i \in \pi_1(\C \setminus \{ y_1, \dots, y_m \})$, $s_i$ going around $y_i$. Then to an $r$-factorization one can assign a family of stable maps  from nodal curves to $\mathbb{C}P^1$. This is done in the following way.
\begin{itemize}
\item[(i)] Consider the covering of $\mathbb{C}P^1$ ramified over $y_1, \dots, y_m$, and $\infty$ with monodromies given by $\sigma_1, \dots, \sigma_m$ and $\sigma^{-1}$ (relative to the chosen loops).
\item[(ii)] If $\sigma_i$ has $l_i$ distinguished cycles and genus  defect~$g_i$, glue a curve of genus $g_i$ with $l_i$ marked points to the $l_i$ preimages of the $i$-th ramification point that correspond to the distinguished cycles. The covering mapping is extended on this new component by  saying that it is entirely projected to the $i$-th ramification point.
\item[(iii)] Among the newly added components, contract those that are
unstable.
\end{itemize}
One can easily check that the arithmetic genus of the curve~$C$ constructed in this way is equal to~$g$. The complex structure on the newly added components of~$C$  can be chosen arbitrarily, which implies that in general we obtain not a unique stable map, but a family of stable maps. 

An $r$-factorization is called {\em transitive} if the curve~$C$ assigned to the factorization is connected, in other words if one can go from every element of $\{1, \dots, d \}$ to any other by applying the permutations $\sigma_i$ and jumping from one distinguished cycle of $\sigma_i$ to another one.

\begin{definition}
	The \emph{disconnected $r$-spin Hurwitz number} $h_{g;\vec{\mu}}^{\bullet,r}$ is the sum of weights of all $r$-factorizations of type $(\mu_1, \dots, \mu_n)$, divided by $|\mu |!m!$.
\end{definition}

\begin{definition}
	The \emph{connected $r$-spin Hurwitz number} $h_{g;\vec{\mu}}^{\circ,r}$ is the sum of weights of transitive $r$-fac\-{}to\-{}ri\-{}za\-{}tions of type $(\mu_1, \dots, \mu_n)$, divided by $|\mu|!m!$.
\end{definition}

Note that connected and disconnected $r$-spin Hurwitz numbers are related via the usual inclusion-exclusion formula.

\subsubsection{$q$-orbifold $r$-spin Hurwitz numbers}

The $q$-orbifold $r$-spin Hurwitz numbers arise as a generalization of the previous case, when one adds another ramification point with profile $[qq\dots q]$. In the language of the symmetric group this looks as follows.

Let $g\in\mathbb{Z}_{\geq 0}$, $r\in\mathbb{Z}_{\geq 1}$ and $q\in\mathbb{Z}_{\geq 1}$. Let $\vec{\mu}=(\mu_1, \dots, \mu_n)$ be an integer partition of length $n=\ell(\mu)$ such that $d \coloneqq |\mu|= \sum_{i=1}^n \mu_i$ is divisible by $q$ and $m \coloneqq (d/q + n + 2g-2)/r$ is an integer.

A {\em $q,r$-factorization of type $(\mu_1, \dots, \mu_n)$} in the symmetric group $S_d$ 
is a  factorization
\begin{equation}
\sigma_1 \dots \sigma_m \gamma= \sigma
\end{equation}
such that 
\begin{itemize}
\item[(i)] the cycle lengths of~$\gamma$ are all equal to $q$,
\item[(ii)] the cycle lengths of~$\sigma$ equal $\mu_1, \dots, \mu_n$ and
\item[(iii)] each permutation $\sigma_i$ enters the completed $(r+1)$-cycle with a nonzero coefficient.
\end{itemize}
The product of these coefficients for $i$ going from $1$ to~$m$
is called the {\em weight} of the $r$-factorization.

In a way completely analogous to the non-orbifold case we can define \emph{transitive} $q,r$-factorizations. Then we can proceed to defining disconnected and connected $q$-orbifold $r$-spin Hurwitz numbers:

\begin{definition}
	The \emph{disconnected $q$-orbifold $r$-spin Hurwitz number} $h_{g;\vec{\mu}}^{\bullet,q,r}$ is the sum of weights of all $q,r$-factorizations of type $(\mu_1, \dots, \mu_n)$, divided by $|\mu |!m!$.
\end{definition}

\begin{definition}
	The \emph{connected $q$-orbifold $r$-spin Hurwitz number} $h_{g;\vec{\mu}}^{\circ,q,r}$ is the sum of weights of transitive $q,r$-factorizations of type $(\mu_1, \dots, \mu_n)$, divided by $|\mu|!m!$.
\end{definition}

Again, connected and disconnected $q$-orbifold $r$-spin Hurwitz numbers are related via the usual inclusion-exclusion formula.

Naturally, for $q=1$ one recovers the $r$-spin Hurwitz numbers, for $r=1$ one recovers the $q$-orbifold Hurwitz numbers, while for $q=r=1$ one arrives at the classical simple Hurwitz numbers.

\subsubsection{Semi-infinite wedge formalism}\label{sec:semi-infwedge}

This subsection is devoted to writing $q$-orbifold $r$-spin Hurwitz numbers in terms of the \emph{semi-infinite wedge formalism} (also known as \emph{free-fermion formalism} to physicists).

First, we define the basic ingredients of this formalism.
For a more complete introduction see e.g. \cite{John15}. We will write \( \Z' \coloneqq \Z + \frac{1}{2}\) for the set of half-integers.
\begin{definition}
	The Lie algebra \( \mathcal{A}_\infty \) is the \( \C \)-vector space of matrices \( (A_{ij})_{i,j \in \Z'} \) with only finitely many non-zero diagonals, together with the commutator bracket.\par
	In this algebra, we will consider the following elements:
	\begin{enumerate}
		\item The standard basis of this algebra is the set \( \{ E_{i,j} \mid i,j \in \Z'\} \) such that \( (E_{i,j})_{k,l} = \delta_{i,k} \delta_{j,l} \);
		\item The diagonal algebra elements (operators) \( \mathcal{F}_n \coloneqq \sum_{k\in \Z'} k^n E_{k,k}\). In particular, \( C \coloneqq \mathcal{F}_0 \) is the \emph{charge operator} and \( E \coloneqq \mathcal{F}_1 \) is the \emph{energy operator}. An algebra element \( A\) has energy \( e\in \Z \) if \( [A,E] = eA \);
		\item For any non-zero integer \( n\), the energy \( n\) element \( \alpha_n \coloneqq \sum_{k \in \Z'} E_{k-n,k} \).
	\end{enumerate}
\end{definition}
The semi-infinite wedge space is a certain projective representation of this algebra, which we will construct now.
\begin{definition}
	Let \( V\) be the vector space spanned by \( \Z'\): \( V = \bigoplus_{i \in \Z'} \C \underline{i} \), where the \( \underline{i} \) are basis elements. We define the \emph{semi-infinite wedge space} \( \mathcal{V} \coloneqq \bigwedge^{\frac{\infty}{2}} V \) to be the span of all one-sided infinite wedge products
	\begin{equation}\label{eq:semi-infwedge}
	\underline{i_1} \wedge \underline{i_2} \wedge \dotsb, 
	\end{equation}
	with \( i_1 < i_2 < \dotsb \in \Z'\), such that there exists a constant $c$ with \( i_k + k -\frac12 = c\) for large \( k\). 
	The constant \( c\) is called the \emph{charge}.
\end{definition}
\begin{remark}
	Notice that \( \mathcal{A}_\infty \) has a natural representation on \( V\), but this cannot be extended to \( \mathcal{V} \) easily, as one would have to deal with infinite sums.
\end{remark}
\begin{definition}
	For a partition \( \lambda \), define
	\begin{equation}
	v_\lambda \coloneqq \underline{\lambda_1 -\frac12} \wedge \underline{\lambda_2 - \frac32}  \wedge \dotsb.
	\end{equation}
	In particular, define the vacuum \( |0\rangle \coloneqq v_\emptyset \) and let the covacuum \( \langle 0| \) be its dual in \( \mathcal{V}^* \).\par
	Define \( \mathcal{V}_0 \) to be the charge-zero subspace of \( \mathcal{V} \). Then \( \mathcal{V}_0 = \bigoplus_{\lambda \in \mc{P}} \C v_\lambda \).
\end{definition}
\begin{definition}
	For an endomorphism \( \mc{O} \) of \( \mathcal{V}_0 \), define its \emph{vacuum expectation value} or \emph{disconnected correlator} to be
	\begin{equation}
	\langle \mc{O} \rangle^\bullet \coloneqq \langle 0 | \mc{O} | 0 \rangle.
	\end{equation}
\end{definition}
\begin{definition}
	Define a projective representation of \( \mathcal{A}_\infty \) on \( \mathcal{V}_0\) as follows: for \( i \neq j\) or \( i = j > 0\), \( E_{i,j} \) checks whether \( v_\lambda \) contains \( \underline{j} \) as a factor and replaces it by \( \underline{i} \) if it does. If \( i = j < 0\), \( E_{i,i} v_\lambda = -v_\lambda \) if \( v_\lambda \) does not contain \( \underline{j} \). In all other cases it gives zero.\par
	Equivalently, this gives a representation of the central extension \( \tilde{\mathcal{A}}_\infty = \mathcal{A}_\infty \oplus \C 1 \), with commutation between basis elements
	\begin{equation}\label{eq:CommEE}
	\left[E_{a,b}, E_{c,d}\right] = \delta_{b,c} E_{a,d} - \delta_{a,d} E_{c,b} +  \delta_{b,c}\delta_{a,d}(\delta_{b>0} - \delta_{d>0})1 .
	\end{equation}
\end{definition}
With these definitions, it is easy to see that \( C \) is identically zero on \( \mathcal{V}_0 \) and \( Ev_\lambda = |\lambda |v_\lambda \). Therefore, any positive-energy operator annihilates the vacuum. Similarly, so do all \( \mc{F}_r \).\par

The \( q\)-orbifold \( r\)-spin Hurwitz numbers can be represented as vacuum expectations of certain operators.
We will write \( \mu = a[\mu ]_a + \langle \mu \rangle_a \) for the integral division of an integer \( \mu \) by a natural number \( a\). If \( a = qr\), we may omit the subscript.

The $q$-orbifold $r$-spin Hurwitz numbers can be represented in terms of the semi-infinite wedge formalism as described in the following proposition.
\begin{proposition}
	The disconnected \( q\)-orbifold $r$-spin Hurwitz numbers can be expressed in terms of semi-infinite wedge formalism as
	\begin{equation}\label{eq:defHurwcorr}
	h_{g;\vec{\mu}}^{\bullet, q,r} =  \cord{  \Big(\frac{\alpha_q}{q}\Big)^{\frac{|\mu |}{q}} \frac{1}{\big(\frac{|\mu |}{q}\big)!} \, \frac{ \mathcal{F}_{r+1}^m}{m!(r+1)^m} \, \prod_{i=1}^{l(\vec{\mu})}  \frac{\alpha_{-\mu_i}}{\mu_i} },
	\end{equation}
	where the number of $(r+1)$-completed cycles is 
	\begin{equation}\label{eq:b-NumberMarkedPoints}
	m = \frac{2g - 2 + l(\mu) + \frac{|\mu|}{q}}{r}.
	\end{equation}
\end{proposition}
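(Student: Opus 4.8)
The plan is to reduce both sides of~\eqref{eq:defHurwcorr} to one and the same explicit sum over the irreducible representations $\rho_\lambda$ of $S_d$, where $d=|\mu|$, and then to observe that the two resulting sums agree term by term (this is essentially a classical computation, going back in the non-orbifold case to Okounkov and Pandharipande, recast in the present normalisation): the left-hand side will be handled by classical character theory of the symmetric group, and the right-hand side by the boson--fermion correspondence together with the diagonal action of $\mc{F}_{r+1}$.

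First, on the combinatorial side, recall from Definition~\ref{Def:CompCyc} and the Kerov--Olshansky theorem that the completed $(r+1)$-cycle, regarded as a central element of $\C S_d$, acts on $\rho_\lambda$ by the scalar $\p_{r+1}(\lambda)$ (the normalising factor $\tfrac{1}{r+1}$ is already built into $\p_{r+1}$). Unfolding the definition of $h_{g;\vec\mu}^{\bullet,q,r}$, the total weight of all $q,r$-factorisations of type $\vec\mu$ equals the coefficient of a fixed permutation of cycle type $\mu$ in the product of central elements $\overline{C}_{r+1}^{\,m}\,C_{(q^{|\mu|/q})}$, times the number $d!/\prod_i\mu_i$ of pairs consisting of a permutation of cycle type $\mu$ and an ordered labelling of its $\ell(\mu)$ cycles (this labelling being exactly the data of the $n$ marked points). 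Applying Fourier inversion on the centre of $\C S_d$ — equivalently, the Frobenius formula for coefficients in a product of class sums — and using $|C_{(q^{|\mu|/q})}|=d!/\bigl(q^{|\mu|/q}(|\mu|/q)!\bigr)$, the factors $\dim\rho_\lambda$ cancel out and one obtains
\begin{equation*}
h_{g;\vec\mu}^{\bullet,q,r}
=\frac{1}{m!\;q^{|\mu|/q}\;(|\mu|/q)!\;\prod_{i=1}^{\ell(\mu)}\mu_i}
\sum_{\lambda\vdash d}\p_{r+1}(\lambda)^m\;\chi^\lambda_\mu\;\chi^\lambda_{(q^{|\mu|/q})}.
\end{equation*}

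Next, on the semi-infinite wedge side, I would expand $\prod_i\tfrac{\alpha_{-\mu_i}}{\mu_i}\,|0\rangle$ in the basis $\{v_\lambda\}_{\lambda\vdash d}$ of the energy-$d$ subspace of $\mc{V}_0$, apply $\mc{F}_{r+1}^m$, and pair with the covacuum after the action of $\alpha_q^{|\mu|/q}$. This uses three ingredients: the boson--fermion correspondence, under which $\prod_i\alpha_{-\mu_i}\,|0\rangle$ corresponds to the power sum $p_\mu=\sum_{\lambda\vdash d}\chi^\lambda_\mu s_\lambda$, so that $\prod_i\alpha_{-\mu_i}\,|0\rangle=\sum_{\lambda\vdash d}\chi^\lambda_\mu\,v_\lambda$ and dually, since $\alpha_n^\dagger=\alpha_{-n}$, $\langle 0|\,\alpha_q^{|\mu|/q}=\sum_{\lambda\vdash d}\chi^\lambda_{(q^{|\mu|/q})}\,\langle v_\lambda|$; the diagonality $\mc{F}_{r+1}v_\lambda=(r+1)\,\p_{r+1}(\lambda)\,v_\lambda$; and the orthonormality of the $v_\lambda$. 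Since $\tfrac{1}{m!(r+1)^m}\mc{F}_{r+1}^m$ then acts on $v_\lambda$ by $\p_{r+1}(\lambda)^m/m!$, the right-hand side of~\eqref{eq:defHurwcorr} collapses to exactly the sum displayed above, and comparing the two expressions proves the proposition.

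The step I expect to require the most care is the identity $\mc{F}_{r+1}v_\lambda=(r+1)\,\p_{r+1}(\lambda)\,v_\lambda$: the operator $\mc{F}_{r+1}=\sum_{k\in\Z'}k^{r+1}E_{k,k}$ is an infinite sum that acts on $\mc{V}_0$ only through the central extension, so its eigenvalue on $v_\lambda$ must be read off from the rules for $E_{k,k}v_\lambda$ given above as the regularised difference between the $(r+1)$-st power sum over the half-integer profile $\{\lambda_i-i+\tfrac12\}_{i\ge1}$ of $v_\lambda$ and the same sum over the vacuum profile $\{-i+\tfrac12\}_{i\ge1}$, which is $(r+1)\,\p_{r+1}(\lambda)$ by the very definition of $\p_{r+1}$. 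The other delicate point is pure bookkeeping of automorphism factors: the occurrence of $\prod_i\mu_i$ (with no extra contribution from repeated parts of $\mu$) reflects that the $\ell(\mu)$ preimages over the last ramification point are labelled, whereas the $|\mu|/q$ preimages over the $q$-orbifold point are not, which is what produces $1/\bigl(q^{|\mu|/q}(|\mu|/q)!\bigr)$, and the $1/m!$ is the symmetry factor for the $m$ completed-cycle insertions. Finally, relation~\eqref{eq:b-NumberMarkedPoints} is merely the standing hypothesis $m=(|\mu|/q+\ell(\mu)+2g-2)/r$ rewritten and needs no proof; it is automatically consistent with the charge and energy gradings of $\mc{V}_0$, under which each operator in~\eqref{eq:defHurwcorr} preserves $\mc{V}_0$ and the total energy shift is zero.
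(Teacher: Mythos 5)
Your argument is correct and is exactly the route the paper takes: the paper gives no details and simply invokes "the basic character formula for general Hurwitz numbers" from Okounkov--Pandharipande, and your proof is a faithful spelling-out of that derivation (Frobenius/character formula for the product of central elements on one side, boson--fermion correspondence and the diagonal action $\mc{F}_{r+1}v_\lambda=(r+1)\p_{r+1}(\lambda)v_\lambda$ on the other, meeting at the same sum over $\lambda\vdash d$). Your bookkeeping of the normalisations, in particular the labelled cycles of $\sigma$ giving $d!/\prod_i\mu_i$ versus the unlabelled $q$-cycles giving $d!/\bigl(q^{|\mu|/q}(|\mu|/q)!\bigr)$, is the right reading of the paper's conventions and matches the correlator.
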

This statement follows from the basic character formula for general Hurwitz numbers, see \cite{OkPa06a}.

\begin{definition}
	The \emph{generating series of \( q\)-orbifold \( r\)-spin Hurwitz numbers} is defined as
	\begin{equation}
	H^{\bullet,q, r}(\vec{\mu}, u) \coloneqq \sum_{g=0}^{\infty} h_{g;\vec{\mu}}^{\bullet, q,r} u^{rm} = \cord{  e^{\frac{\alpha_q}{q}}\, e^{u^r\frac{\mathcal{F}_{r+1}}{r+1}}\, \prod_{i=1}^{l(\vec{\mu})} \frac{\alpha_{-\mu_i}}{\mu_i} }.
	\end{equation}
	The \emph{free energies} are defined as
	\begin{equation}\label{FreeEnergies}
	H_{g,n}^{q,r}(X_1,\dotsc, X_n) \coloneqq \sum_{\mu_1, \dotsc, \mu_n =1}^\infty h_{g;\vec{\mu}}^{\circ, q,r} e^{\sum_{i=1}^n \mu_i X_i}
	\end{equation}
\end{definition}

With the help of semi-infinite wedge formalism, in \cite{KLPS17} the following \emph{quasi-polynomiality theorem} was proved in a purely combinatorial way:
\begin{theorem}[\cite{KLPS17}]\label{thm:r-spinpoly}
	For \( 2g-2+ \ell (\vec{\mu} ) > 0\), the connected \( q\)-orbifold \( r\)-spin Hurwitz numbers can be expressed in the following way:
	\begin{equation}
	h_{g,\vec{\mu}}^{\circ,q, r} = \prod_{i=1}^{l(\vec{\mu})} \frac{\mu_i^{[\mu_i]}}{[\mu_i]!}
	P_{\langle \vec{\mu} \rangle}(\mu_1, \dots, \mu_{l(\vec{\mu})}),
	\end{equation}
	where $P$ are symmetric polynomials in the variables $\mu_1, \dots, \mu_{l(\vec{\mu})}$ whose coefficients depend on the parameters $\langle \mu_1 \rangle, \dots, \langle \mu_{l(\vec{\mu})} \rangle$, and which has an upper bound on its degree in all variables that is independent of~$\vec{\mu}$.
\end{theorem}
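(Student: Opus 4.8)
The plan is to prove the quasi-polynomiality statement by a direct computation inside the semi-infinite wedge formalism, reducing the connected Hurwitz numbers $h_{g;\vec\mu}^{\circ,q,r}$ to a finite combinatorial sum in which the dependence on each part $\mu_i$ becomes explicit. \emph{Step 1 (setup and connectedness):} from the operator formula \eqref{eq:defHurwcorr} one passes to the connected correlator; since $\langle 0|\,\cdot\,|0\rangle$ annihilates every operator of nonzero energy, and $\alpha_{-\mu_i}$, $\alpha_q$, $\mathcal{F}_{r+1}$ have energies $-\mu_i$, $q$, $0$ respectively, the number of $\alpha_q$-insertions is forced to be $|\mu|/q$, and the genus-$g$ term---the coefficient of $u^{rm}$ in the generating series---contains exactly $m$ factors $\mathcal{F}_{r+1}$. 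So it suffices to analyse the connected vacuum expectation of $e^{\alpha_q/q}\,\mathcal{F}_{r+1}^m/(m!(r+1)^m)\,\prod_{i=1}^{n}\alpha_{-\mu_i}$, with $n=\ell(\vec\mu)$, and to isolate its $\vec\mu$-dependence.

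\emph{Step 2 (disentangling):} write $\alpha_{-\mu_i}=\sum_{k\in\Z'}E_{k+\mu_i,k}$ and commute the creation operators to the left, first through $\mathcal{F}_{r+1}^m$---which multiplies the $k$-th summand by a polynomial in $k$ built from $(k+\mu_i)^{r+1}-k^{r+1}$---and then through $e^{\alpha_q/q}$, which shifts indices by multiples of $q$ and, inside the connected correlator, produces the chains of $\alpha_q$'s linking the various $\alpha_{-\mu_i}$. This bookkeeping is best organised with the Okounkov operators $\mathcal{E}_a(z)=\sum_{k\in\Z'}e^{z(k-a/2)}E_{k-a,k}+\delta_{a,0}/(2\sinh(z/2))$, which satisfy $\mathcal{E}_a(0)=\alpha_a$ for $a\ne 0$ and close under the bracket $[\mathcal{E}_a(z),\mathcal{E}_b(w)]=2\sinh\!\big(\tfrac{aw-bz}{2}\big)\mathcal{E}_{a+b}(z+w)$ up to a central term when $a+b=0$. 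After commuting everything through and evaluating on the vacuum, the connected correlator becomes a finite sum, indexed by combinatorial data (roughly, connected graphs on the $n$ inputs carrying a bounded amount of extra decoration, all controlled by $g$), of products of explicit one-variable sums / contour integrals, with the dependence on $\mu_i$ localised at the $i$-th vertex.

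\emph{Step 3 (extraction):} it remains to show that each vertex factor, after the (finite) sum over the relevant half-integers is carried out, equals $\mu_i^{[\mu_i]}/[\mu_i]!$ times a polynomial in $\mu_i$ whose coefficients depend on $\mu_i$ only through the residue $\langle\mu_i\rangle$ modulo $qr$. The non-polynomial prefactor $\mu_i^{[\mu_i]}/[\mu_i]!$ is produced by the $q$-shift structure of $e^{\alpha_q/q}$ together with the $r$-spin evolution, via a residue identity in which one must identify the correct pole order; what remains is then a sum of powers, hence a polynomial in $\mu_i$ of bounded degree whose coefficients only see $\langle\mu_i\rangle$. Taking the product over $i$ and summing the finitely many combinatorial contributions gives the claimed formula, and the degree bound is independent of $\vec\mu$ because, once $g$ and $n$ are fixed, the indexing set is finite and each vertex polynomial has degree bounded in terms of $g,n,q,r$ only.

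I expect Step 3 to be the main obstacle: one has to prove that the intricate double index sums produced in Step 2 collapse \emph{exactly} to $\prod_i\mu_i^{[\mu_i]}/[\mu_i]!$ times a genuine polynomial, and that the $r$-spin evolution operator $e^{u^r\mathcal{F}_{r+1}/(r+1)}$---which, unlike $e^{\alpha_q/q}$, does \emph{not} close on the $\mathcal{E}$-algebra once $r\ge 2$---can nonetheless be controlled, for instance by expanding in $u$ and summing order by order with power-sum formulas, or by introducing auxiliary variables that are specialised only at the end. Keeping track of the interplay between the mod-$q$ shifts and the mod-$r$ spin data---exactly what forces residues to be taken modulo $qr$---is the combinatorial heart of the argument.
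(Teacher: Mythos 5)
This theorem is not proved in the paper at all: it is imported verbatim from~\cite{KLPS17}, where it is established ``in a purely combinatorial way'' in the semi-infinite wedge formalism. Your framework --- energy conservation fixing the number of $\alpha_q$-insertions, conjugating the $\alpha_{-\mu_i}$ through $e^{u^r\mathcal{F}_{r+1}/(r+1)}$ and $e^{\alpha_q/q}$, organising the result with the operators $\mathcal{E}_a(z)$ and their $\zeta$-commutator, and reducing the connected correlator to a finite sum of vertex factors --- is exactly the $\mathcal{A}$-operator strategy of~\cite{OkPa06a,SSZ,KLPS17}, so you have identified the right route.

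However, as written the proposal has a genuine gap, and it sits precisely where you place it: Step 3 is not an ``obstacle to be expected'' but \emph{is} the theorem. That each vertex factor collapses to $\mu_i^{[\mu_i]}/[\mu_i]!$ times a polynomial whose coefficients see $\mu_i$ only through $\langle\mu_i\rangle$ (residue mod $qr$) is asserted via an unspecified ``residue identity in which one must identify the correct pole order''; nothing in Steps 1--2 forces this collapse, and without it the statement reduces to a tautology. Two concrete points must be supplied. First, the conjugation by $e^{u^r\mathcal{F}_{r+1}/(r+1)}$ multiplies $E_{k+\mu,k}$ by $e^{s((k+\mu)^{r+1}-k^{r+1})}$, which for $r\geq 2$ leaves the $\mathcal{E}_a(z)$ family, so the commutator formula you quote no longer applies directly; one has to work with the genuinely more general conjugated operators and re-derive closed forms for their brackets and vacuum expectations (this is where the bulk of the work in~\cite{KLPS17} lies, and ``expanding in $u$ order by order'' does not by itself restore control, since one then needs uniform-in-$\mu$ estimates to resum). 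Second, the prefactor $\mu^{[\mu]}/[\mu]!$ arises from a specific hypergeometric-type sum produced by $e^{\alpha_q/q}$ shifting the energy of $\alpha_{-\mu}$ up in steps of $q$ until it crosses zero; proving that the remaining sum is a polynomial of degree bounded in terms of $g,n,q,r$ alone requires an explicit evaluation of that sum, not just the finiteness of the indexing set. Until those two computations are carried out, the argument is a plausible plan rather than a proof.
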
 

\subsubsection{Relative Gromov-Witten invariants and the Okounkov-Pandharipande formula}

The $q$-orbifold $r$-spin Hurwitz numbers were originally introduced in~\cite{OkPa06a} because of their relation to relative Gromov-Witten invariants of $\mathbb{C}P^1$; this relation is a special case of the Okounkov-Pandharipande theorem from \cite{OkPa06a}, which we would like to recall.

Let $\oM_{g,m; \mu_1, \dots, \mu_n; q}\left(\mathbb{C}P^1\right)$ be the space of stable genus~$g$ maps to $\mathbb{C}P^1$ relative to $\{\infty,0\} \in \mathbb{C}P^1$ with profiles $(\mu_1, \dots, \mu_n)$ and $(q,q,\dots,q)$ respectively and with $m$ marked points in the source curve, where $m = (|\mu|/q + n + 2g-2)/r$. Let $[\oM_{g,m; \mu_1, \dots, \mu_n; q}\left(\mathbb{C}P^1\right)]^{\mathrm{vir}}$ be its virtual fundamental class.
See e.g.~\cite{Vakil2008} for the precise definition and main properties. Let $\omega \in H^2(\mathbb{C}P^1)$ be the Poincar\'e dual class of a point.

A special case of Okounkov--Pandharipande theorem from \cite{OkPa06a} states that
\begin{theorem}[Okounkov--Pandharipande, \cite{OkPa06a}] \label{thm:OkoPan}
	Connected $q$-orbifold, $r$-spin Hurwitz numbers are equal to certain relative Gromov-Witten invariants of $\mathbb{C}P^1$. Specifically, we have:
	\begin{equation}
	h_{g,\vec{\mu}}^{\circ,q, r} = \frac{(r!)^m}{m!}
	\int\limits_{[\oM_{g,m; \mu_1, \dots, \mu_n; q}\left(\mathbb{C}P^1\right)]^{\mathrm{vir}}} 
	\!\!\!\!
	\ev_1^*(\omega) \psi_1^r \cdots \ev_m^*(\omega)\psi_m^r 
	\end{equation}
	Here $\ev_i$ denotes the evaluation map $\oM_{g,m; \mu_1, \dots, \mu_n; q}\left(\mathbb{C}P^1\right) \rightarrow \mathbb{C}P^1$ at the $i$-th marked point, $i=1,\dots,m$, and $\psi_i~\in~H^2\left(\oM_{g,m; \mu_1, \dots, \mu_n; q}\left(\mathbb{C}P^1\right)\right)$ is the $\psi$-class corresponding to the $i$-th marked point.
\end{theorem}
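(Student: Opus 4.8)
The plan is to obtain this statement as a specialization of the Gromov--Witten/Hurwitz correspondence of Okounkov and Pandharipande~\cite{OkPa06a}, combined with the operator description of the Hurwitz numbers recorded in~\eqref{eq:defHurwcorr}. Recall that in~\cite{OkPa06a} the full stationary relative descendent theory of $\mathbb{C}P^1$ is computed in the semi-infinite wedge formalism: a descendent insertion $\tau_k(\omega) = \ev^*(\omega)\psi^k$ at a non-relative marked point on the source translates into the operator $\overline{C}_{k+1}/k!$ on $\mathcal{V}_0$, equivalently $\mathcal{F}_{k+1}/(k+1)!$ (here $\mathcal{F}_{k+1}/(k+1)$ is the diagonal operator acting on $v_\lambda$ by $\p_{k+1}(\lambda)$, which is $\overline{C}_{k+1}$ by Definition~\ref{Def:CompCyc}; the lower power sums in Okounkov--Pandharipande's original normalization play no role, exactly as the additive constants dropped from $\p_{k+1}$ above); a ramification condition with profile $\nu$ over one of the two relative points translates into $\prod_i \alpha_{\nu_i}/\nu_i$ and over the other into $\prod_i \alpha_{-\nu_i}/\nu_i$, carrying the symmetry factor $1/|\Aut(\nu)|$; and the relative invariant is the vacuum expectation of the product of all these operators. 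The first step is therefore to quote this theorem in the form admitting two relative points together with $m$ descendent insertions.

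Second, I would specialize all $m$ descendents to $\tau_r(\omega)$, the profile over $\infty$ to $(\mu_1,\dots,\mu_n)$, and the profile over $0$ to $(q,q,\dots,q)$ with $|\mu|/q$ parts. The operator at the $q$-orbifold point is then $(\alpha_q/q)^{|\mu|/q}/(|\mu|/q)!$, the operator at $\infty$ is $\prod_{i=1}^n \alpha_{-\mu_i}/\mu_i$, and the product of the $m$ labelled descendents is $\overline{C}_{r+1}^m/(r!)^m = \mathcal{F}_{r+1}^m/((r+1)!)^m$; the labelling is immaterial because the $\mathcal{F}_{r+1}$ are diagonal and mutually commute. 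Multiplying by the prefactor of the statement turns the $m$ descendents into $m$ completed $(r+1)$-cycles, $(r!)^m\cdot\tfrac1{m!}\cdot\overline{C}_{r+1}^m/(r!)^m = \overline{C}_{r+1}^m/m! = \mathcal{F}_{r+1}^m/(m!(r+1)^m)$, so the resulting vacuum expectation is literally the right-hand side of~\eqref{eq:defHurwcorr}.

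Third, I would check the numerics and the passage to connected invariants. The requirement that $\ev_1^*(\omega)\psi_1^r\cdots\ev_m^*(\omega)\psi_m^r$ be of top degree against $[\oM_{g,m;\mu_1,\dots,\mu_n;q}(\mathbb{C}P^1)]^{\mathrm{vir}}$ reads $m(r+1) = \ell(\mu) + |\mu|/q + 2g - 2 + m$, i.e.\ $m = (|\mu|/q + \ell(\mu) + 2g-2)/r$, which is precisely~\eqref{eq:b-NumberMarkedPoints} and matches the energy balance making the corresponding correlator on $\mathcal{V}_0$ nonzero. For connectedness, the vacuum expectation $\langle\,\cdot\,\rangle^\bullet$ and the moduli space of maps from possibly disconnected sources both compute the disconnected invariant, the connected invariants on the two sides are extracted by one and the same inclusion--exclusion, and~\eqref{eq:defHurwcorr} is exactly the disconnected statement, so the connected identity claimed here follows. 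The one step that is not pure bookkeeping is the dictionary entry $\tau_k(\omega)\leftrightarrow \overline{C}_{k+1}/k!$, i.e.\ the resolution of a $\psi^k$-insertion on the source into a completed $(k+1)$-cycle; this is the heart of the Okounkov--Pandharipande computation (degeneration to a rational target plus the operator calculus for $\mathbb{C}P^1$), and I would invoke~\cite{OkPa06a} for it rather than reprove it.
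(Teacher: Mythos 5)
The paper offers no proof of this statement at all: it is quoted verbatim as a special case of the Okounkov--Pandharipande GW/H correspondence from~\cite{OkPa06a}, so your derivation --- specializing their correspondence to two relative points with profiles $\vec\mu$ and $(q,\dots,q)$ and $m$ insertions of $\tau_r(\omega)$, matching the operators against~\eqref{eq:defHurwcorr}, checking the dimension constraint~\eqref{eq:b-NumberMarkedPoints}, and passing between connected and disconnected invariants by the common inclusion--exclusion --- is exactly the intended reading, with the bookkeeping (the $(r!)^m/m!$ prefactor and the identification $\overline{C}_{r+1}\leftrightarrow\mathcal{F}_{r+1}/(r+1)$) done correctly. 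Since the only non-trivial ingredient, the dictionary $\tau_k(\omega)\leftrightarrow\overline{C}_{k+1}/k!$, is invoked from~\cite{OkPa06a} in both cases, your proposal takes essentially the same route as the paper.
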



{\subsection{Chiodo classes and Zvonkine's conjecture} \label{sec:chiodo-classes}
	The central objects in Zvonkine's conjecture are the so-called Chiodo classes, which are cohomology classes on the moduli spaces of stable curves $\overline{\mc{M}}_{g,n}$.
	In this section we briefly recall their definition, as well as properties relevant for our proof.
	More details can be found in \cite{Chio08,ChiodoRuan,JPPZ,SSZ,KLPS17,CladerJanda}.
	
	{\subsubsection{Geometric definition} \label{sec:chiodo-classes-geomdef}
		{ 
			Let $r\geq 1$ be an integer and $ g\geq 0$, $n \geq 1$, $1\leq a_1,\dots,a_n\leq r$, and $s \geq 0$ be integers satisfying 
			\begin{equation} \label{eq:root-existence-condition}
				(2g-2+n)s-\sum_{i=1}^n a_i \in r\Z
			\end{equation}
		}
		
		{ 
			Let $[C, p_1, \dots, p_n] \in \mathcal{M}_{g,n}$ be a nonsingular curve with distinct marked points.
			Furthermore, let $\omega_{\text{log}} = \omega_C (\sum p_i)$ be its log-canonical bundle.
			The condition \eqref{eq:root-existence-condition} ensures that $r$th tensor roots $L$ of the line bundle
			\begin{equation}
				\omega_{\text{log}}^{\otimes s} \left ( -\sum a_i p_i \right )
			\end{equation}
			on $C$ exist. There is a natural compactification of this moduli space of $r$th roots, 
			denoted $\overline{\mathcal{M}}^{r,s}_{g;a_1,\dots,a_n}$,
			which is an analog of the Deligne-Mumford compactification of $\mathcal{M}_{g,n}$ and was constructed in
			\cite{Chiodo-stable-twisted,Jarvis,AbramovichJarvis,CaporaseEtAl}.
		}
		
			Let $\pi : \mathcal{C}^{r,s}_{g;a_1,\dots a_n} \rightarrow \overline{\mathcal{M}}^{r,s}_{g;a_1,\dots a_n}$ be the universal curve
			and let $\mathcal{L} \rightarrow \mathcal{C}^{r,s}_{g;a_1,\dots a_n}$
			be the universal $r$th root.
		%
		%
			The Chiodo class is the full Chern class of the derived push-forward
				$c(- R^\bullet \pi_* \mathcal{L})$.

		{ 
			In practice, we only need an expression for the pushforward of the Chiodo class
			to the compactified moduli space of curves $\overline{\mathcal{M}}_{g, n}$.
			There is an explicit formula for this pushforward in terms of tautological classes,
			which we recall below.
		}
	}
	
	{\subsubsection{Formula in terms of tautological classes} \label{sec:chiodo-classes-tau-formula}
		The Chern characters of the derived push-forward $R^\bullet \pi_* {\mathcal L}$ are given by Chiodo's formula~\cite{Chio08}. In order to give this formula, we first need to give some definitions. For any nodal curve in $\overline{\mathcal{M}}^{r,s}_{g;a_1,\dots a_n}$, the nodes must have automorphism group $ \Z /r\Z$, inducing a primitive character on the cotangent line at each side of the branch (we pick one side). The line bundle $L$ at this side is naturally a $\Z /r\Z$-representation, because it is an $r$-th root. This representation is then an $a$-th power of the representation of the cotangent line at the point for some $a$. This $a$ is locally constant on the boundary divisor, and hence we can split this divisor into components. We let $j_a$ be the boundary map for the $a$-th component. We also write $\psi',\psi''$ for the $\psi$-classes at the two branches of the node (in general, we use standard notation for $\psi$ and $\kappa$ tautological classes, see e.g.~\cite{Vakil2003,Zvonkine2012}). Then Chiodo's formula is
		\begin{align} \label{eq:chiodo-tau-formula}
			\ch_m(R^\bullet \pi_* {\mathcal L}) = 
			\frac{B_{m+1}(\frac sr)}{(m+1)!} \kappa_m
			- \sum_{i=1}^n 
			\frac{B_{m+1}(\frac{a_i}r)}{(m+1)!} \psi_i^m 
			+ 
			\frac{r}2 \sum_{a=0}^{r-1} 
			\frac{B_{m+1}(\frac{a}r)}{(m+1)!} (j_a)_* 
			\frac{(\psi')^m + (-1)^{m-1} (\psi'')^m}{\psi'+\psi''}\,.
		\end{align}
		The Bernoulli polynomials $B_{l}(x)$ used in this formula are generated by the function
		\begin{align} \label{eq:bernoulli-generating-function}
			\sum_{l=0}^\infty B_l(x) \frac{t^l}{l!} = \frac{t e^{x t}}{e^t - 1}\,.
		\end{align}
		Let $\epsilon$ be the forgetful map
		\begin{equation}
			\epsilon : \overline{\mathcal{M}}^{r,s}_{g;a_1,\dots,a_n} \rightarrow \overline{\mathcal{M}}_{g, n}
		\end{equation}
		We are interested in the pushforwards of the Chiodo classes
		\begin{align} \label{eq:chiodo-def-cohft}
			\Chiodo_{g,n}(r,s;a_1,\dots,a_n) := & \ \epsilon_* c(-R^\bullet \pi_* {\mathcal L}) = \epsilon_* \left[ c(R^1\pi_*{\mathcal L})/c(R^0\pi_*{\mathcal L})\right]
			\\ \notag
			= & \ \epsilon_{*}\exp\left(\sum_{m=1}^\infty (-1)^m (m-1)!\ch_m(R^\bullet\pi_* {\mathcal L})\right) \in H^{\rm even}(\oM_{g,n}).
		\end{align}
The pushforwards of the Chiodo classes form a  \emph{cohomological field theory} in the sense of~\cite{KontsevichManin} (with non-flat unit if $s>r$), and can therefore be written explicitly in terms of the Givental graphs, see~\cite{LPSZ-Chiodo}.
	
	{\subsubsection{Zvonkine's $qr$-ELSV formula} \label{sec:chiodo-classes-lpsz-hypothesis}
		In \cite{KLPS17} the authors proposed the following conjecture, which is a direct orbifold generalization of Zvonkine's conjecture.
		\begin{conjecture}\cite[Conjecture 6.1]{KLPS17} \label{conj:main} $q$-orbifold $r$-spin Hurwitz numbers
			are given by the formula
			\begin{align}
				h_{g,\mu_1,\dots,\mu_n}^{\circ,q, r} = 
				r^{2g-2+n}(qr)^{\frac{(2g-2+n)q+\sum_{j=1}^n \mu_j}{qr}}  \prod_{j=1}^{n} \frac{\big(\frac{\mu_j}{qr}\big)^{[\mu_j]}}{[\mu_j]!}
				\int_{\oM_{g,n}} \frac{\Chiodo_{g,n} \left (qr,q; qr- \left < \mu_1 \right >,
					\dots,
					qr - \left < \mu_n \right > \right )
				}{\prod_{j=1}^n (1-\frac{\mu_i}{qr}\psi_i)} \,,
			\end{align}
		where $\mu=qr[\mu]+ \left < \mu \right >$ is the integral division of $\mu$ by $qr$.
		\end{conjecture}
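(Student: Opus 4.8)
The plan is to prove the formula via the Chekhov--Eynard--Orantin topological recursion, by showing that both sides of Conjecture~\ref{conj:main} compute the correlation differentials $\omega_{g,n}$ of one and the same spectral curve. Concretely, let $\mathcal{S}$ be the spectral curve on $\P^1$ with global coordinate $z$, Bergman kernel $B = \dd z_1\,\dd z_2/(z_1-z_2)^2$, and, up to the overall rescalings responsible for the powers of $r$ and $qr$ in the statement,
\[ x(z) = z\, e^{-z^{qr}/(qr)}, \qquad y(z) = z^{q}. \]
Under the change of variables $e^{X_i} = x(z_i)$, the generating series $H_{g,n}^{q,r}$ of connected $q$-orbifold $r$-spin Hurwitz numbers produce, for $2g-2+n>0$, $n$-differentials $\omega_{g,n}^{\mathrm{Hur}} = \dd_1\cdots\dd_n H_{g,n}^{q,r}$ on $\mathcal{S}$, while the right-hand side of Conjecture~\ref{conj:main}, assembled into a series in the $\mu_i$ and rewritten via the same substitution, produces $n$-differentials $\omega_{g,n}^{\mathrm{Chiodo}}$; in the latter step it is Lagrange inversion for $x(z) = z\, e^{-z^{qr}/(qr)}$ that manufactures exactly the combinatorial prefactors $\prod_j (\mu_j/qr)^{[\mu_j]}/[\mu_j]!$ out of the intersection numbers $\int_{\oM_{g,n}}\Chiodo_{g,n}(qr,q;\dots)\prod_j(1-\frac{\mu_j}{qr}\psi_j)^{-1}$, matching the shape of Theorem~\ref{thm:r-spinpoly}. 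It then suffices to prove $\omega_{g,n}^{\mathrm{Hur}} = \omega_{g,n}(\mathcal{S}) = \omega_{g,n}^{\mathrm{Chiodo}}$ for all $(g,n)$ and to read off coefficients.

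For the Chiodo side I would use that the pushforwards $\Chiodo_{g,n}(qr,q;a_1,\dots,a_n)$ form a cohomological field theory, as recalled in Section~\ref{sec:chiodo-classes}. Since $1\le s=q\le qr$, this CohFT has a flat unit and is semisimple, so by the Givental--Teleman classification it is the action of an explicit $R$-matrix (and translation) on the trivial CohFT, extracted from Chiodo's formula~\eqref{eq:chiodo-tau-formula}. The correspondence of Dunin-Barkowski--Orantin--Shadrin--Spitz between semisimple CohFTs and topological recursion---in the form worked out specifically for Chiodo classes in~\cite{LPSZ-Chiodo}---then identifies $\omega_{g,n}^{\mathrm{Chiodo}}$ with the topological recursion differentials of a spectral curve determined by this Givental data, and a finite genus-zero computation identifies that curve with $\mathcal{S}$. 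The only subtlety here is the bookkeeping of the index flip $a_i = qr-\langle\mu_i\rangle$ and of the $s/(qr)$ versus $1-s/(qr)$ conventions relating~\cite{LPSZ-Chiodo} to Conjecture~6.1 of~\cite{KLPS17}.

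For the Hurwitz side---the genuinely new input---I would start from the semi-infinite wedge expression~\eqref{eq:defHurwcorr} together with the quasi-polynomiality Theorem~\ref{thm:r-spinpoly}, and proceed in four steps. First, quasi-polynomiality implies that the $\omega_{g,n}^{\mathrm{Hur}}$ extend to global meromorphic $n$-differentials on $\mathcal{S}$, holomorphic away from the zeros of $\dd x$ (the points with $z^{qr}=1$), while the uniform degree bound in Theorem~\ref{thm:r-spinpoly} bounds the order of their poles there---this is the projection property. Second, the genus-zero disconnected one-point function of the operator $e^{\alpha_q/q}e^{u^r\mathcal{F}_{r+1}/(r+1)}$, computed by commuting $\alpha_{-\mu}$ to the left, recovers $x(z)$ and $y(z)$ and hence pins down $\mathcal{S}$. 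Third, the linear loop equations---the holomorphicity near each zero of $\dd x$ of the part of $\omega_{g,n}^{\mathrm{Hur}}$ that is anti-invariant under the local deck transformation---follow from the $\Z/qr\Z$-structure of the dependence of the $n$-point functions on $\langle\mu_1\rangle,\dots,\langle\mu_n\rangle$. Fourth, and this is the crux, one must prove the quadratic loop equations; for this I would analyse the two-point structure of~\eqref{eq:defHurwcorr} using the commutation relations~\eqref{eq:CommEE} among the $E_{a,b}$, the $\alpha_{\pm k}$ and $\mathcal{F}_{r+1}$, so as to control the principal parts of $\omega_{g,n}^{\mathrm{Hur}}$ near the ramification locus. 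Once the linear and quadratic loop equations and the projection property are in place, the abstract reconstruction theorem of Borot--Shadrin (equivalently, Eynard's characterization of topological recursion by the loop equations) gives $\omega_{g,n}^{\mathrm{Hur}} = \omega_{g,n}(\mathcal{S})$.

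Combining the two sides gives Conjecture~\ref{conj:main}, and the specialization $q=1$ recovers Zvonkine's original $r$-ELSV formula. I expect the main obstacle to be the fourth step above: while the pole structure, the projection property and the linear loop equations all follow fairly directly from quasi-polynomiality, the quadratic loop equations require a genuinely new analysis of the two-point behaviour of the free-fermionic correlators~\eqref{eq:defHurwcorr} near the ramification points, and this is where the technical heart of the argument lies. A secondary, more mechanical, difficulty is the tracking of signs and normalization conventions in the identification of the Chiodo-side spectral curve with $\mathcal{S}$.
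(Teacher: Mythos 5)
Your overall reduction is exactly the one the paper uses: by proposition~\ref{prop:equiva} (established in \cite{LPSZ-Chiodo,SSZ} via the Givental/DOSS correspondence for the Chiodo CohFT), the conjecture is equivalent to theorem~\ref{thm:toporec}, i.e.\ to showing that the differentials $d_1\otimes\cdots\otimes d_n H_{g,n}$ on the curve~\eqref{eq:SpectralCurveData} satisfy CEO topological recursion; by proposition~\ref{prop:equivCEOloop} this reduces to the projection property, the linear loop equations, and the quadratic loop equations; and the first two of these do follow from quasi-polynomiality (more precisely from proposition~\ref{prop:FreeEnergyXiFunctions}, which puts the free energies in the basis $(D_x)^a\xi^i$, rather than from a pole-order bound as you phrase it). Up to this point your proposal and the paper coincide.

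The gap is your fourth step, which you yourself identify as the crux and for which you offer only a plan, not an argument. You propose to prove the quadratic loop equations by a direct analysis of the correlators~\eqref{eq:defHurwcorr} using the commutation relations~\eqref{eq:CommEE}, but you give no indication of how commutators of $\mathcal{F}_{r+1}$ with the $\alpha_{\pm k}$ would control the holomorphicity of the combination~\eqref{eq:QLE1} near a branch point: the operator formalism naturally produces coefficients of $\prod_i x_i^{\mu_i}$, and the passage from such coefficient data to local statements at the ramification points $z^{qr}=1$ is precisely the hard part (the linear loop equation survives this passage only because of the very specific structure of the $\xi^i$-basis, and no analogous structural statement is known for the quadratic combination). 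The paper's route is genuinely different: it starts from the spin cut-and-join equation~\eqref{cutresummed} of \cite{SSZ12,SpecialCases}, symmetrizes it under the local deck transformation, decomposes it into $\mc{S}$- and $\Delta$-pieces via~\eqref{Sondiagonal}, and runs a double induction (on negative Euler characteristic and on the number of $\Delta$'s; proposition~\ref{AllDeltasHolomorphic} and corollary~\ref{AllHolomorphic}) to show that the only possibly non-holomorphic contribution to the symmetrized equation is $\restr_{w_1=z_0}\restr_{w_2=z_0}\Delta_{w_1}\Delta_{w_2}\tilde{\cW}_{g,2,n}$ multiplied by $\mc{S}_{z_0}\big(y(z_0)^{r-1}\big)$, which is nonzero at the branch points --- and that is exactly the $(g,n)$ quadratic loop equation. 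Without either this cut-and-join input or a worked-out substitute for your free-fermion analysis, the proposal does not yet constitute a proof.
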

		
		This conjecture expresses the $q$-orbifold $r$-spin Hurwitz numbers as an explicit ELSV-like integral over the moduli space of curves, where the role of the Hodge class $1-\lambda_1+\cdots\pm \lambda_g $ is played by the pushforward of the Chiodo class,
		$\Chiodo_{g,n}(r,s;a_1,\dots,a_n)$. We call this formula for the $q$-orbifold $r$-spin Hurwitz numbers Zvonkine's $qr$-ELSV formula. 
		
		This conjecture is already known for $q=r=1$ (in this case it is the standard ELSV formula proved in~\cite{ELSV01}, see also~\cite{GraberVakil,DKOSS13}),  $r=1$, $q\geq 1$ (then it is the Johnson-Pandharipande-Tseng formula proved in~\cite{JohnsonPandharipandeTseng}, see also~\cite{DLPS}), and $r=2$, $q\geq 1$ (proved in~\cite{SpecialCases}). It is also known to hold for any $q,r\geq 1$  in genus $g=0$~\cite{SpecialCases}.
		
		The main result of this paper is a proof of conjecture~\ref{conj:main} in full generality:
		
		\begin{theorem}\label{thm:ZvonkineTrue} Zvonkine's $qr$-ELSV formula holds. 
		\end{theorem}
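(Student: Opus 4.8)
The plan is to prove Zvonkine's $qr$-ELSV formula by showing that both of its sides are computed by topological recursion on one and the same spectral curve, and then comparing the initial data. First I would repackage the connected $q$-orbifold $r$-spin Hurwitz numbers into $n$-point correlation differentials $\omega_{g,n}$, obtained from the free energies $H_{g,n}^{q,r}$ of \eqref{FreeEnergies} by a Laplace-type change of variables $e^{X_i}\leftrightarrow z_i$ along a rational spectral curve $\mathcal{S}=(\CP,x,y)$ whose functions $x(z),y(z)$ are the Lambert-type functions dictated by the operators $\alpha_q/q$ and $u^r\mathcal{F}_{r+1}/(r+1)$ in the vacuum-expectation expression for the Hurwitz numbers; in particular $x$ has $qr$ simple critical points. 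The quasi-polynomiality Theorem~\ref{thm:r-spinpoly} of \cite{KLPS17} is exactly what guarantees that the $\omega_{g,n}$ extend to global meromorphic differentials on $\mathcal{S}$ with poles only at the critical points of $x$ and with an order bounded independently of $\vec\mu$, and it also fixes the unstable data $\omega_{0,1}=y\,\dd x$ and $\omega_{0,2}$ the Bergman kernel.

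The heart of the argument is to show that these $\omega_{g,n}$ satisfy the linear and quadratic loop equations at each critical point $a$ of $x$: writing $\sigma_a$ for the local deck involution, the combination $\omega_{g,n+1}(z,\cdot)+\omega_{g,n+1}(\sigma_a z,\cdot)$ must be holomorphic at $a$, and $\omega_{g-1,n+2}(z,\sigma_a z,\cdot)$ plus the sum over stable splittings of $\omega_{g_1}(z,\cdot)\,\omega_{g_2}(\sigma_a z,\cdot)$ must have at worst the admissible double pole inherited from $\omega_{0,2}$. The linear loop equations follow from symmetry together with the polynomiality of Theorem~\ref{thm:r-spinpoly}. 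The quadratic loop equations are the genuinely new input, and I would prove them in the semi-infinite wedge formalism of Section~\ref{sec:semi-infwedge}: expand the correlators as vacuum expectations of products of the operators $\alpha_{\pm k}$ and $\mathcal{F}_{r+1}$, commute the energy through the $\alpha$'s, and track the resulting cancellations so that the $\sigma_a$-symmetrised combination becomes holomorphic — in the spirit of the loop-equation proofs already available for simple and for $q$-orbifold Hurwitz numbers, but now complicated by the simultaneous presence of $\alpha_q$ and of the completed cycle $\mathcal{F}_{r+1}$.

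Granting the pole structure and degree bounds of the first step and the loop equations of the second, the reconstruction theorem for topological recursion (Borot--Shadrin, building on Bouchard--Eynard and Borot--Eynard--Orantin) applies, the degree bound in Theorem~\ref{thm:r-spinpoly} furnishing in addition the projection property that kills the would-be holomorphic (``blobbed'') part; hence the $\omega_{g,n}$ are precisely the topological-recursion differentials of $\mathcal{S}$. On the other side, I would invoke the identification, established in \cite{LPSZ-Chiodo} through the Givental--Teleman classification and the Dunin-Barkowski--Orantin--Shadrin--Spitz theorem relating cohomological field theories to topological recursion, that topological recursion on exactly this spectral curve computes the generating series of the intersection numbers $\int_{\oM_{g,n}}\Chiodo_{g,n}(qr,q;qr-\langle\mu_1\rangle,\dots)/\prod_j(1-\tfrac{\mu_j}{qr}\psi_j)$.

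Comparing the two resulting expressions for $\omega_{g,n}$ and extracting the coefficient of $e^{\sum_i\mu_i X_i}$ then yields the formula of Conjecture~\ref{conj:main} for all $(g,n)$ with $2g-2+n>0$, and the finitely many remaining unstable cases are checked by hand. The main obstacle is the second step, specifically the quadratic loop equations: although the ramification of $x$ is simple, there are $qr$ branch points and the combinatorics of moving $\mathcal{F}_{r+1}$ through the $\alpha$-operators is intricate, so establishing the required holomorphicity of the symmetrised correlators uniformly in all insertions is where the real effort lies.
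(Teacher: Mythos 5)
Your overall architecture coincides with the paper's: reduce the formula to a statement about topological recursion on the curve \eqref{eq:SpectralCurveData} via \cite{LPSZ-Chiodo} (proposition~\ref{prop:equiva}), replace the recursion by the projection property plus linear and quadratic loop equations (proposition~\ref{prop:equivCEOloop}), and get the projection property and the linear loop equations from the quasi-polynomiality of theorem~\ref{thm:r-spinpoly}. Up to that point your proposal is correct and is essentially an assembly of results from \cite{KLPS17}, \cite{LPSZ-Chiodo}, \cite{BEO13,BoSh15}, exactly as in the paper.

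The genuine gap is in the one step that constitutes the actual content of the paper: the quadratic loop equations (theorem~\ref{thm:QuadraticLoop}). You propose to prove them directly in the semi-infinite wedge formalism of section~\ref{sec:semi-infwedge}, by commuting the energy operator through the $\alpha$'s and ``tracking the resulting cancellations.'' This is a declaration of intent, not an argument, and it is precisely the route that has so far only produced non-rigorous derivations (cf.\ \cite{BorotMulase} for $q=r=1$ and \cite{SSZ} for $q=1$): the operator manipulations do not by themselves yield the local holomorphicity of the $\sigma_a$-antisymmetrized correlators at the branch points, because holomorphicity is an analytic statement about globally defined functions on the spectral curve, while the operator formalism only produces formal power series identities in the $x_i$. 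The rigorous proofs in all previously known cases (\cite{DKOSS13}, \cite{DLPS}, \cite{SpecialCases}) instead pass through the cut-and-join equation, and so does this paper: it takes the spin cut-and-join equation \eqref{cutresummed} (proved in \cite{SSZ12}, recast in \cite{SpecialCases}), symmetrizes it at a ramification point, distributes the operators $\mc{S}$ and $\Delta$ via the identity \eqref{Sondiagonal}, and then runs a double induction (on the negative Euler characteristic and on the number of $\Delta$'s) whose engine is proposition~\ref{AllDeltasHolomorphic}. That proposition — showing that a specific weighted sum of multi-$\Delta$ correlators with even $D$-derivatives is holomorphic, with a delicate treatment of the diagonal poles of the regularized $\tilde{W}_{0,2}$ via the redefinition \eqref{DeltaDeltaBreg} — is what isolates the new $(g,n)$ quadratic loop equation inside \eqref{CutJoinConcise} with the non-vanishing coefficient $\mc{S}_{z_0}(y(z_0)^{r-1})$. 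None of this machinery, nor any substitute for it, appears in your proposal; without it the claim that the symmetrized correlators are holomorphic ``uniformly in all insertions'' is exactly the unproved conjecture you set out to establish.
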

		
		The proof of this theorem uses the formalism of CEO topological recursion explained below. Let us note one more fact before proceeding to that.	Namely, our main result, theorem \ref{thm:ZvonkineTrue}, together with Okounkov--Pandharipande's theorem (theorem \ref{thm:OkoPan}) immediately imply the following purely intersection theory statement
		\begin{corollary}	\label{cor:intersect}		
			\begin{align}
			&\frac{(r!)^m}{m!} \int\limits_{[\oM_{g,m; \mu_1, \dots, \mu_n; q}\left(\mathbb{C}P^1\right)]^{\mathrm{vir}}} 
			\!\!\!\!
			\ev_1^*(\omega) \psi_1^r \cdots \ev_m^*(\omega)\psi_m^r \\ \nonumber &= 
			\int_{\oM_{g,n}} \frac{\Chiodo_{g,n} \left (qr,q; qr- \left < \mu_1 \right >,
				\dots,
				qr - \left < \mu_n \right > \right )
			}{\prod_{j=1}^n (1-\frac{\mu_i}{qr}\psi_i)}
			\cdot
			r^{2g-2+n}(qr)^{\frac{(2g-2+n)q+\sum_{j=1}^n \mu_j}{qr}}  \prod_{j=1}^{n} \frac{\big(\frac{\mu_j}{qr}\big)^{[\mu_j]}}{[\mu_j]!}\,.
			\end{align}
		\end{corollary}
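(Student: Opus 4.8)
The plan is simply to compose the two theorems already at our disposal; Corollary~\ref{cor:intersect} carries no content beyond them, and the ``proof'' is a one-line chaining of identities.

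First I would invoke Theorem~\ref{thm:OkoPan} (Okounkov--Pandharipande) to identify the left-hand side of the asserted equality. Up to the combinatorial prefactor $(r!)^m/m!$, that left-hand side is exactly the relative Gromov--Witten integral over $[\oM_{g,m;\mu_1,\dots,\mu_n;q}(\CP)]^{\mathrm{vir}}$ appearing in Theorem~\ref{thm:OkoPan}; hence by that theorem the whole left-hand side equals the connected $q$-orbifold $r$-spin Hurwitz number $h_{g,\vec{\mu}}^{\circ,q,r}$. At this step one only has to check that the integer $m$ is defined by the same formula in both places --- both use $m=(|\mu|/q+n+2g-2)/r$ --- and that the moduli space in Corollary~\ref{cor:intersect} carries the ramification profiles $(q,\dots,q)$ over $0$ and $(\mu_1,\dots,\mu_n)$ over $\infty$ exactly as in Theorem~\ref{thm:OkoPan}. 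This is bookkeeping, not mathematics.

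Second, I would apply the main result of this paper, Theorem~\ref{thm:ZvonkineTrue}, i.e. Conjecture~\ref{conj:main}, which expresses that same Hurwitz number $h_{g,\vec{\mu}}^{\circ,q,r}$ as
\[
r^{2g-2+n}(qr)^{\frac{(2g-2+n)q+\sum_{j=1}^n\mu_j}{qr}}\prod_{j=1}^n\frac{\bigl(\frac{\mu_j}{qr}\bigr)^{[\mu_j]}}{[\mu_j]!}\int_{\oM_{g,n}}\frac{\Chiodo_{g,n}\bigl(qr,q;qr-\langle\mu_1\rangle,\dots,qr-\langle\mu_n\rangle\bigr)}{\prod_{j=1}^n\bigl(1-\frac{\mu_i}{qr}\psi_i\bigr)},
\]
which is precisely the right-hand side of the corollary. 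Equating the two expressions obtained for $h_{g,\vec{\mu}}^{\circ,q,r}$ yields the claim.

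Since both inputs are already established, there is no genuine obstacle: the only care required is to confirm that the divisibility hypotheses ($q\mid d$ and $m\in\Z$) under which Theorems~\ref{thm:OkoPan} and~\ref{thm:ZvonkineTrue} are stated coincide, and that the transitivity (``connected'') convention matches the one used in Conjecture~\ref{conj:main} --- both immediate from the definitions recalled above. The corollary is therefore a direct composition of the two theorems, and I would present it as such.
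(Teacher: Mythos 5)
Your proposal is correct and matches the paper exactly: the corollary is stated there as an immediate consequence of combining Theorem~\ref{thm:OkoPan} with Theorem~\ref{thm:ZvonkineTrue}, which is precisely your two-step chaining through $h_{g,\vec{\mu}}^{\circ,q,r}$. Nothing further is needed.
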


\subsection{Topological recursion}

\subsubsection{General setup}
The topological recursion of Chekhov, Eynard, and Orantin~\cite{ChEy06,EyOr07,EynardSurvey} 
associates to a Riemann surface $\Sigma$ (the so-called spectral curve) equipped with two functions $X,y\colon \Sigma\to \mathbb{C}$ and a symmetric bidifferential $B$ on $\Sigma^2$ satisfying some extra conditions a family of meromorphic symmetric $n$-differentials (CEO-differentials) $\omega_{g,n}$ defined on $\Sigma^n$, $g\geq 0$, $n\geq 1$. We assume that $dX$ is meromorphic and all critical points $p_1,\dots,p_r$ of $X$ are simple, $y$ is holomorphic near $p_i$ and $dy\not=0$ at $p_i$, $i=1,\dots,r$, and $B$ has no singularities except for a double pole on the diagonal with biresidue $1$. We set by definition $\omega_{0,1} = ydX$, $\omega_{0,2} = B$, and for $2g-2+n>0$ we define:
\begin{align}\label{eq:CEO-toprec}
\omega_{g,n}(z_{\{1,\dots,n\}}) = 
\frac 12 \sum_{i=1}^r \Res_{z\to p_i} \frac{\int_z^{\sigma_i(z)}\omega_{0,2}(\cdot,z_1)}{\omega_{0,1}(\sigma_i(z))-\omega_{0,1}(z)} \Big[
\omega_{g-1,n+1}(z,\sigma_i(z),z_{\{2,\dots,n\}}) + 
\\ \notag 
\sum_{\substack{g_1+g_2 = g \\ I_1\sqcup I_2 = \{2,\dots,n\} \\ (g_i,|I_i|)\not= (0,0)}}
\omega_{g_1,1+|I_1|}(z,z_{I_1})\omega_{g_2,1+|I_2|}(\sigma_i(z),z_{I_2})
\Big].
\end{align}
Here $\sigma_i$ is the deck transformation for $X$ near the point $p_i$, $i=1,\dots,r$, and all $\omega_{-1,n}$, $n\geq 1$, are set to be equal to $0$. Furthermore, for a set $I$, we write $ z_I = \{ z_i \}_{i \in I}$.

Eynard proved in~\cite{Eynard2014} that for $2g-2+n>0$ the meromorphic differentials $\omega_{g,n}$ can be represented as linear combinations of the intersection numbers of some explicitly computed tautological classes on $\oM_{g,n}$ multiplied by some auxiliary differentials. Under some extra conditions, see~\cite{DOSS14} and also~\cite{DBNOPS-Dubrovin,DBNOPS-Primary}, it is proved in~\cite{DOSS14} that the meromorphic differentials $\omega_{g,n}$ can be represented in terms of the correlators of a semi-simple cohomological field theory of rank $r$, where the cohomological field theory is given explicitly in terms of Givental graphs~\cite{DBSS-GivGraphs}, and some other auxiliary differentials. More precisely, for $2g-2+n>0$ the differentials $\omega_{g,n}$ are represented as
\begin{equation}\label{eq:CohFT-differential}
\omega_{g,n}=\sum_{\substack{i_1,\dots,i_n \\ a_1,\dots,a_n}} \int_{\overline{\mathcal{M}}_{g,n}} \alpha_{g,n}(e_{i_1},\dots,e_{i_n}) \prod_{j=1}^n
\psi_j^{a_j} d\left(\left(\frac{d}{dX}\right)^{a_j} \xi^{i_j}(z_j)\right),
\end{equation}
where
\begin{equation}\label{eq:xi-function}
\xi^i(z)\coloneqq \int^z \left. \frac{\omega_{0,2} (w_i,\cdot)}{dw_i}\right|_{w_i=0}\,
\end{equation}
for a local coordinate $w_i$ near $ p_i$, and $\alpha_{g,n}\colon V^{\otimes n} \to H^*(\oM_{g,n},\mathbb{C})$ form a cohomological field theory, where $V$ is an $r$-dimensional vector space with basis $\langle e_1,\dots,e_r\rangle$.

\subsubsection{Particular spectral curves}
We consider the spectral curve data
\begin{equation}\label{eq:SpectralCurveData}
\Sigma=\CP,\ X(z)=-z^{qr}+\log z,\ y(z)=z^q,\ B(z_1,z_2)= dz_1dz_2/(z_1-z_2)^2.
\end{equation}
It is more convenient to work with this curve using the function $x=e^X = ze^{-z^{qr}}$. 
For this curve all the ingredients of the formula in equation~\eqref{eq:CohFT-differential} can be computed explicitly, and it is proved in~\cite{LPSZ-Chiodo} that the expansions of $\omega_{g,n}$ in the variables $x_1,\dots,x_n$ near $x_1=\cdots = x_n= 0$ are given by
\begin{align}
\omega_{g,n} \sim\  d_1\otimes \cdots \otimes d_n \sum_{\mu_1,\dots,\mu_n=1}^\infty &
\int_{\oM_{g,n}} \frac{\Chiodo_{g,n} \left (rq,q; qr- \left < \mu_1 \right >,
	\dots,
	qr -  \left < \mu_n \right > \right )
}{\prod_{j=1}^n (1-\frac{\mu_i}{qr}\psi_i)}
\\ \notag &
\cdot
r^{2g-2+n}(qr)^{\frac{(2g-2+n)q+\sum_{j=1}^n \mu_j}{qr}}  \prod_{j=1}^{n} \frac{\big(\frac{\mu_j}{qr}\big)^{[\mu_j]}}{[\mu_j] !}x_j^{\mu_j}.
\end{align}
Thus we have the following proposition.

\begin{proposition}[\cite{LPSZ-Chiodo,SSZ}] \label{prop:equiva} Zvonkine's $qr$-ELSV formula holds if and only if the expansion of the CEO-differentials $\omega_{g,n}$ for the curve~\eqref{eq:SpectralCurveData} in  the variables $x_1,\dots,x_n$ near $x_1=\cdots = x_n= 0$ is given by
\begin{equation}\label{eq:expansionCEOdiff}
\omega_{g,n} - \delta_{g,0}\delta_{n,2} \frac{dx_1dx_2}{(x_1-x_2)^2} \sim\  d_1\otimes \cdots \otimes d_n \sum_{\mu_1,\dots,\mu_n=1}^\infty h_{g;\mu}^{\circ, q,r} \prod_{i=1}^n x_i^{\mu_i}.
\end{equation}
\end{proposition}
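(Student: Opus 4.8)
The plan is to read the claimed equivalence off the two explicit formulas already in hand: the computation of \cite{LPSZ-Chiodo} for the expansion of the differentials $\omega_{g,n}$ attached to the spectral curve \eqref{eq:SpectralCurveData}, recalled just above, and the statement of Conjecture \ref{conj:main}. Concretely, by that theorem of \cite{LPSZ-Chiodo,SSZ}, for $2g-2+n>0$ the differential $\omega_{g,n}$ of the curve \eqref{eq:SpectralCurveData}, expanded near $x_1=\dots=x_n=0$ in the coordinate $x=ze^{-z^{qr}}$, equals $d_1\otimes\dots\otimes d_n$ times a power series whose coefficient of $\prod_j x_j^{\mu_j}$ is exactly the right-hand side of Conjecture \ref{conj:main} (note $\Chiodo_{g,n}(rq,q;\dots)=\Chiodo_{g,n}(qr,q;\dots)$, so the two expressions are literally the same). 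On the other hand, by \eqref{FreeEnergies} the series $\sum_\mu h^{\circ,q,r}_{g;\mu}\prod_j x_j^{\mu_j}$ appearing in \eqref{eq:expansionCEOdiff} has coefficient of $\prod_j x_j^{\mu_j}$ equal to the left-hand side of Conjecture \ref{conj:main}. Since the monomials $\prod_j x_j^{\mu_j}$ with all $\mu_j\geq 1$ are linearly independent and the assignment of a differential to such a series is injective, the expansion \eqref{eq:expansionCEOdiff} holds for a fixed stable $(g,n)$ if and only if the two sides agree coefficientwise, which is precisely the assertion that Zvonkine's $qr$-ELSV formula holds in that genus with that number of parts. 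Running over all stable $(g,n)$ yields both implications of the proposition simultaneously.

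The only point needing separate attention is the unstable case $(g,n)=(0,2)$, which is the reason for the correction term $\delta_{g,0}\delta_{n,2}\,dx_1dx_2/(x_1-x_2)^2$ in \eqref{eq:expansionCEOdiff}: here $\omega_{0,2}=B$ is prescribed rather than produced by the recursion \eqref{eq:CEO-toprec}, and in the coordinate $x$ it differs from the Cauchy kernel $dx_1dx_2/(x_1-x_2)^2$ by a bidifferential holomorphic near the origin. I would check by a direct change-of-variables computation between $z$ and $x=ze^{-z^{qr}}$ that the Taylor coefficients of this holomorphic part are unconditionally the numbers $h^{\circ,q,r}_{0;\mu_1,\mu_2}$ produced by the semi-infinite wedge expression \eqref{eq:defHurwcorr}; the case $(g,n)=(0,1)$, if one wishes to include it, is handled the same way starting from $\omega_{0,1}=y\,dX$ with $y=z^q$. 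This establishes the unstable part of \eqref{eq:expansionCEOdiff} outright, so that the equivalence with Zvonkine's formula reduces cleanly to the stable range. (If one reads the proposition only for $2g-2+n>0$, this paragraph is vacuous.)

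I do not expect a genuine obstacle inside this argument: once the theorem of \cite{LPSZ-Chiodo} is granted it is essentially bookkeeping, plus the elementary unstable check. The real difficulty of proving Theorem \ref{thm:ZvonkineTrue} is thereby relocated entirely into verifying the expansion \eqref{eq:expansionCEOdiff} itself — that is, into showing that the CEO-differentials of the curve \eqref{eq:SpectralCurveData} are the generating series of connected $q$-orbifold $r$-spin Hurwitz numbers. That is the statement the loop equations and CEO topological recursion developed in the remainder of the paper are designed to prove, using the quasi-polynomiality of Theorem \ref{thm:r-spinpoly} together with a verification of the (linear and quadratic) loop equations for the curve \eqref{eq:SpectralCurveData}.
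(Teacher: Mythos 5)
Your proposal is correct and matches the paper's own treatment: the proposition is obtained there by quoting the expansion of $\omega_{g,n}$ computed in \cite{LPSZ-Chiodo,SSZ} (whose coefficients are exactly the right-hand side of Conjecture~\ref{conj:main}) and comparing coefficientwise with the generating series \eqref{FreeEnergies}, just as you do. Your extra remarks on the unstable $(0,2)$ correction term are consistent with how the paper handles it implicitly.
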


Thus, an equivalent way to reformulate  theorem~\ref{thm:ZvonkineTrue} is
\begin{theorem}\label{thm:toporec} The expansion of the CEO-differentials  $\omega_{g,n}$ for the curve~\eqref{eq:SpectralCurveData} in  the variables $x_1,\dots,x_n$ near $x_1=\cdots = x_n= 0$ is given by equation~\eqref{eq:expansionCEOdiff}.
\end{theorem}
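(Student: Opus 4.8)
The plan is to prove Theorem~\ref{thm:toporec} --- equivalently, by Proposition~\ref{prop:equiva}, Theorem~\ref{thm:ZvonkineTrue}. Write $\tilde\omega_{g,n}$ for the generating differential of the connected $q$-orbifold $r$-spin Hurwitz numbers: for $2g-2+n>0$,
\[
\tilde\omega_{g,n}\coloneqq d_1\otimes\cdots\otimes d_n\sum_{\mu_1,\dots,\mu_n=1}^\infty h_{g;\mu}^{\circ,q,r}\prod_{i=1}^n x_i^{\mu_i},
\]
while $\tilde\omega_{0,1}\coloneqq y\,dX=z^q\,dX$ and $\tilde\omega_{0,2}\coloneqq B$. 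The goal is to show that the $\tilde\omega_{g,n}$ coincide with the CEO-differentials $\omega_{g,n}$ of the curve~\eqref{eq:SpectralCurveData}, which is exactly the content of~\eqref{eq:expansionCEOdiff}. For this I would invoke the standard reconstruction theorem for abstract loop equations: a symmetric collection of differentials with the prescribed $\tilde\omega_{0,1}$ and $\tilde\omega_{0,2}$, whose higher members are meromorphic on $(\CP)^n$ with poles, in each variable, only at the simple ramification points of $X$ and of bounded order, and which satisfy the linear and the quadratic loop equations at each ramification point, necessarily satisfies the topological recursion~\eqref{eq:CEO-toprec}. (On $\CP$ there are no $A$-cycles, so no further normalization condition enters.) The proof therefore splits into: (i)~meromorphic continuation and pole structure; (ii)~the linear loop equations; (iii)~the quadratic loop equations; and (iv)~the formal invocation of the reconstruction theorem.

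For~(i), I would start from the semi-infinite wedge expression~\eqref{eq:defHurwcorr} for the disconnected correlators, which via inclusion--exclusion produces closed operator formulas for all connected $n$-point generating series. Feeding in the quasi-polynomiality of Theorem~\ref{thm:r-spinpoly}, one checks that under the substitution $x=ze^{-z^{qr}}$ (i.e.\ $X=-z^{qr}+\log z$) each $\tilde\omega_{g,n}$ becomes a global rational $n$-differential on $(\CP)^n$ whose poles in $z_j$ lie only at the zeros of $dX$, namely at $z^{qr}=1/(qr)$, with pole orders as required by the reconstruction theorem. Here one also records that $X''\neq0$ at these nonzero points and that $y=z^q$ is holomorphic with $dy\neq0$ there, so all $qr$ ramification points are simple, and that $\tilde\omega_{0,1}=z^q\,dX$, $\tilde\omega_{0,2}=B$ agree with~\eqref{eq:SpectralCurveData}. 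For~(ii), the linear loop equations --- that $\sum_{z'\colon X(z')=X(z)}\tilde\omega_{g,n}(z',z_{\{2,\dots,n\}})$ continues holomorphically across each ramification point as a function of $z$ --- should fall out of a deck-transformation symmetry already present in the explicit $n$-point formulas, in the same spirit as the symmetry exploited to prove quasi-polynomiality in~\cite{KLPS17}.

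The heart of the matter, and the step I expect to be the main obstacle, is~(iii): for each ramification point $p_i$ and each $(g,n)$ one must show that the symmetric quadratic differential
\[
\mathcal{Q}_{g,n}(z)\coloneqq\tilde\omega_{g-1,n+1}(z,\sigma_i(z),z_{\{2,\dots,n\}})
+\sum_{\substack{g_1+g_2=g\\ I_1\sqcup I_2=\{2,\dots,n\}}}\tilde\omega_{g_1,1+|I_1|}(z,z_{I_1})\,\tilde\omega_{g_2,1+|I_2|}(\sigma_i(z),z_{I_2})
\]
(the sum including the unstable terms with a factor $\tilde\omega_{0,1}$) has no pole at $z=p_i$ of order exceeding two. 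This does not follow formally from quasi-polynomiality: a priori the collision $z\to\sigma_i(z)$ combined with $\sigma_i(z)\to p_i$ could create higher-order poles, and one must prove these cancel. The plan is to carry this out on the level of the explicit operator formulas coming from~\eqref{eq:defHurwcorr} --- rewriting the two-point insertion at $z$ and $\sigma_i(z)$ by means of the commutation relations~\eqref{eq:CommEE}, isolating the pieces that could produce a pole of order higher than two as $z\to p_i$, and showing that the genus-defect bookkeeping of the completed cycles $\overline{C}_{r+1}$ forces these pieces to vanish. This is the same bookkeeping that, on the geometric side, produces the Chiodo class in Conjecture~\ref{conj:main}, so one should expect the combinatorics to be delicate but ultimately to close up.

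Once (i)--(iii) are in place, step~(iv) is immediate: the reconstruction theorem identifies $\tilde\omega_{g,n}$ with the CEO-differentials of the spectral curve~\eqref{eq:SpectralCurveData}, which upon re-expanding near $x_1=\dots=x_n=0$ is exactly~\eqref{eq:expansionCEOdiff}, establishing Theorem~\ref{thm:toporec} and hence Theorem~\ref{thm:ZvonkineTrue} and Corollary~\ref{cor:intersect}.
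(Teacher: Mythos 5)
Your overall skeleton is the same as the paper's: reduce the statement, via the Borot--Eynard--Orantin/Borot--Shadrin reconstruction theorem (proposition~\ref{prop:equivCEOloop}), to the projection property plus the linear and quadratic loop equations, and obtain the first two of these from quasi-polynomiality (theorem~\ref{thm:r-spinpoly}, in the form of proposition~\ref{prop:FreeEnergyXiFunctions}, which writes the free energies as finite combinations of $(D_x)^{a}\xi^{i}$ and hence gives globality, the correct pole structure, and the linear loop equations all at once). The problem is step~(iii). You correctly identify the quadratic loop equations as the heart of the matter, but what you offer there is not a proof: ``rewriting the two-point insertion by means of the commutation relations~\eqref{eq:CommEE} \dots and showing that the genus-defect bookkeeping forces these pieces to vanish'' is a program, not an argument, and there is no reason given why the cancellation closes up. This is precisely the part that constitutes essentially the entire technical content of the paper (theorem~\ref{thm:QuadraticLoop}), and the paper does \emph{not} obtain it by direct manipulation of the semi-infinite wedge correlators. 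Instead it starts from an independent input, the spin cut-and-join equation~\eqref{cutresummed} of~\cite{SSZ12,SpecialCases}, symmetrizes it at each ramification point using the identity~\eqref{Sondiagonal}, carefully regularizes the diagonal poles of $\tilde W_{0,2}$, and runs a double induction (on the negative Euler characteristic and on the number of anti-symmetrization operators $\Delta$) whose key technical engine is proposition~\ref{AllDeltasHolomorphic}; the $(g,n)$ quadratic loop equation is then extracted as the unique possibly non-holomorphic leading term, multiplied by $\mc{S}_{z_0}(y(z_0)^{r-1})$, which is nonzero at the branch points. Without either this cut-and-join input or a worked-out replacement for it, your proof has a genuine gap exactly where the conjecture was hard.

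A secondary point: you state the quadratic loop equation as the assertion that $\mathcal{Q}_{g,n}$ ``has no pole at $z=p_i$ of order exceeding two.'' The correct condition (equation~\eqref{eq:QLE1}) is that $\mathcal{Q}_{g,n}(z)/\bigl(dX(z)\,dX(\sigma_i(z))\bigr)$ is holomorphic at $p_i$, i.e.\ that $\mathcal{Q}_{g,n}$ is a holomorphic quadratic differential with at least a double zero there; since the denominator itself vanishes to second order, your weaker formulation would not suffice for the reconstruction theorem. Your remark that no extra normalization is needed on $\CP$ is essentially right, but one should still verify the absence of residues at the ramification points to get the projection property; in the paper this is automatic from the $\xi$-function representation.
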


\begin{remark} The spectral curve for the $q$-orbifold $r$-spin Hurwitz numbers in full generality was predicted in~\cite{MSS} via the analysis of the so-called quantum curve. 
\end{remark}

\begin{remark}
Historically, this theorem was first formulated for $q=r=1$ as the Bouchard-Mari\~no conjecture~\cite{BouchardMarino}, and this case was first proved in~\cite{EMS-Laplace} \emph{using the ELSV formula for Hurwitz numbers}, see also~\cite{Eyna11}. In a similar way, this theorem was proved for any $q$, $r=1$ in~\cite{BouchardMulase,Do-Orbifold} \emph{using the Johnson-Pandharipande-Tseng formula}. These proofs are not exactly what we want, since we want to use the inverse of their arguments, namely, we want to use this theorem in order to prove Zvonkine's $qr$-ELSV formula.	
\end{remark} 

\begin{remark} Proofs independent of Zvonkine's $qr$-ELSV formula are known in special cases. First of all, there are non-rigorous physics arguments in~\cite{BorotMulase} for $q=r=1$ and in~\cite{SSZ} for $q=1$, any $r$. Then there are rigorous proofs in~\cite{DKOSS13} for $q=r=1$, in~\cite{DLPS} for any $q$, $r=1$ (see also~\cite{KLS} for an alternative argument for a part of that proof, and a discussion in~\cite{LewanskiSurvey}), and in~\cite{SpecialCases} for any $q$, $r=2$.  This theorem is also already known for any $q, r\geq 1$ in genus $g=0$, see~\cite{KLPS17} for the unstable cases $n=1,2$ and~\cite{SpecialCases} for $n\geq 3$.
\end{remark}

\subsubsection{Loop equations} We use a reformulation of the CEO topological recursion proved in~\cite{BEO13,BoSh15}. We say that a system of meromorphic differentials $\omega_{g,n}$ with possible poles at $p_1,\dots,p_{qr}$ satisfies the \emph{projection property} if $P_{1}\cdots P_{n} \omega_{g,n} = \omega_{g,n}$ for $2g-2+n>0$, where for any meromorphic differential $\lambda$ we define
\begin{equation}
(P \lambda)(z) = \sum_{j=1}^{qr} \Res_{w\to p_i}  \lambda(w) \int_{p_i}^w \omega_{0,2} (\cdot, z),
\end{equation}
and by writing $P_i$ we mean that we apply this operation to the $i$-th variable.

Denote
\begin{equation}
W_{g,n}(z_{\{1,\dots,n\}}) := \omega_{g,n}(z_{\{1,\dots,n\}}) / \prod_{j=1}^n dX(z_j).
\end{equation}

We say that a system of meromorphic differentials $\omega_{g,n}$ with possible poles at $p_1,\dots,p_{qr}$ satisfies the \emph{linear loop equations} if for any $g\geq 0$, $n\geq 1$ the expression
\begin{equation}\label{eq:LLE1}
W_{g,n}(z,z_{\{2,\dots,n\}}) + W_{g,n}(\sigma_i(z),z_{\{2,\dots,n\}})
\end{equation}
is holomorphic in $z$ for $z\to p_i$, $i=1,\dots,qr$.

We say that a system of meromorphic differentials $\omega_{g,n}$ with possible poles at $p_1,\dots,p_{qr}$ satisfies the \emph{quadratic loop equations} if for any $g\geq 0$, $n\geq 0$ the expression
\begin{equation}\label{eq:QLE1}
W_{g-1,n+2}(z,\sigma_i(z),z_{\{1,\dots,n\}}) + \sum_{\substack{g_1+g_2 = g \\ I_1\sqcup I_2 = \{1,\dots,n\} }}
{W}_{g_1,1+|I_1|}(z,z_{I_1}){W}_{g_2,1+|I_2|}(\sigma_i(z),z_{I_2})
\end{equation}
is holomorphic in $z$ for $z\to p_i$, $i=1,\dots,qr$. 

\begin{proposition}[\cite{BEO13,BoSh15}] \label{prop:equivCEOloop} A system of meromorphic differentials $\omega_{g,n}$ with $\omega_{0,1} = ydX$, $\omega_{0,2} = B$, satisfies the CEO topological recursion for the data $(\Sigma,X,y,B)$ if and only if it satisfies the projection property, the linear loop equation, and the quadratic loop equation, where point $p_i$ are the cricial points of map $X$.
\end{proposition}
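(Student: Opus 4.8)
The plan is to prove the equivalence by residue calculus at the ramification points $p_i$, resting on one elementary observation: in a local coordinate $t$ near $p_i$ in which the deck transformation $\sigma_i$ acts by $t\mapsto -t$, a meromorphic $1$-form that is anti-invariant under $\sigma_i$ has vanishing residue at $p_i$ (its Laurent expansion involves only the forms $t^{\mathrm{even}}\,dt$), so that against such a form one may modify the other factor of a product by any $\sigma_i$-invariant function without changing the residue. Everything is organized through the recursion kernel $K_i(z_1,z)=\tfrac12\int_z^{\sigma_i(z)}\omega_{0,2}(\cdot,z_1)\big/\big(\omega_{0,1}(\sigma_i(z))-\omega_{0,1}(z)\big)$, whose denominator is $\sigma_i$-anti-invariant in $z$ with a simple zero at $p_i$, whose numerator is regular at $z=p_i$, and which, as a $1$-form in $z_1$, is of the shape $\int_{p_i}^{\bullet}\omega_{0,2}(\cdot,z_1)$ and so is fixed by the projection operator $P$.

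I would treat the direction ``loop equations $\Rightarrow$ recursion'' first, as it is the substantive one. The projection property gives $\omega_{g,n}(z_1,z_{\{2,\dots,n\}})=\sum_i\Res_{z\to p_i}\omega_{g,n}(z,z_{\{2,\dots,n\}})\int_{p_i}^{z}\omega_{0,2}(\cdot,z_1)$, reading $\omega_{g,n}(z,z_{\{2,\dots,n\}})$ as a $1$-form in $z$. The linear loop equation is precisely the assertion that $\omega_{g,n}(z,z_{\{2,\dots,n\}})+\omega_{g,n}(\sigma_i(z),z_{\{2,\dots,n\}})$ is regular at $z=p_i$ (dividing by $dX(z)$, which has a simple zero there, is what converts a parity condition on the polar part into the $W$-form of the equation); since $\int_{p_i}^{z}\omega_{0,2}(\cdot,z_1)$ is regular at $z=p_i$, this even part contributes no residue, so one may replace $\omega_{g,n}$ by its $\sigma_i$-anti-invariant part. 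Feeding the linear loop equation into the quadratic one — whose unstable splitting terms are $y(z)W_{g,n}(\sigma_i(z),\cdot)+y(\sigma_i(z))W_{g,n}(z,\cdot)$, so that eliminating $W_{g,n}(\sigma_i(z),\cdot)$ produces the factor $y(\sigma_i(z))-y(z)$ — one finds the quadratic loop equation to be equivalent to the regularity at $z=p_i$ of $\omega_{g,n}(z,z_{\{2,\dots,n\}})+\mathcal{Q}_i(z)\big/\big(\omega_{0,1}(\sigma_i(z))-\omega_{0,1}(z)\big)$, where $\mathcal{Q}_i(z)$ denotes the bracket appearing in~\eqref{eq:CEO-toprec} and where one uses that $y$ is regular and $dX$ vanishes simply at $p_i$ to discard a discrepancy term. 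Thus the principal part of $\omega_{g,n}(z,z_{\{2,\dots,n\}})$ at $p_i$ coincides with that of the $\sigma_i$-anti-invariant $1$-form $\eta_i(z):=-\mathcal{Q}_i(z)\big/\big(\omega_{0,1}(\sigma_i(z))-\omega_{0,1}(z)\big)$. Substituting this into the projection formula and replacing $\int_{p_i}^{z}\omega_{0,2}(\cdot,z_1)$ by $-\tfrac12\int_{z}^{\sigma_i(z)}\omega_{0,2}(\cdot,z_1)$ — legitimate since the two differ by $\tfrac12\big(\int_{p_i}^{z}+\int_{p_i}^{\sigma_i(z)}\big)\omega_{0,2}(\cdot,z_1)$, a $\sigma_i$-invariant function of $z$ that contributes no residue against $\eta_i$ — rebuilds exactly the kernel $K_i$, hence the recursion~\eqref{eq:CEO-toprec}.

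For the converse, suppose $\omega_{g,n}$ is given by~\eqref{eq:CEO-toprec}. The projection property is immediate: as a $1$-form in $z_1$, the right-hand side is a residue-sum of the forms $K_i(z_1,z)$, each of the shape $\int_{p_i}^{\bullet}\omega_{0,2}(\cdot,z_1)$, hence in the image of $P_1$; more precisely, moving the projection residue (in an auxiliary variable $w$) past the recursion residue and using that $P$ fixes forms $\int_{p_i}^{\bullet}\omega_{0,2}$ returns the same expression, so $P_1\omega_{g,n}=\omega_{g,n}$, and symmetry of $\omega_{g,n}$ extends this to all slots. The loop equations I would obtain by induction on $2g-2+n$, which is legitimate because every $\omega_{g',n'}$ appearing on the right of~\eqref{eq:CEO-toprec} has $2g'-2+n'<2g-2+n$; the base cases $\omega_{0,3}$ and $\omega_{1,1}$, together with the prescribed properties of $\omega_{0,1}$ and $\omega_{0,2}$, are checked directly. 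In the inductive step one substitutes~\eqref{eq:CEO-toprec} into the linear and quadratic combinations, notes that only the $i$-th summand of~\eqref{eq:CEO-toprec} can be singular as $z\to p_i$, and uses the behaviour of $K_i$ under $\sigma_i$ together with the loop equations at smaller $2g'-2+n'$ to see that the singularities cancel.

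I expect the crux to be the step in the first direction that identifies the principal part of $\omega_{g,n}$ at $p_i$ with $\eta_i(z)$: this is where the \emph{quadratic} — and not merely the linear — loop equation is indispensable, and it requires checking carefully that the ``regular at $p_i$'' discrepancies it produces, which carry the factors $y(z)$ and $dX(z)$ and therefore vanish to exactly the right order, contribute nothing to the residues. The remaining points — keeping track of which quantities transform as $1$-forms and which as functions under $\sigma_i$, and justifying the interchange of nested residues — are routine but easy to get wrong.
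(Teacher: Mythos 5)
The paper does not actually prove this proposition --- it is imported verbatim from the cited references \cite{BEO13,BoSh15} --- so there is no in-paper argument to compare against; your sketch reproduces the standard proof from those references and is essentially correct: the parity/residue bookkeeping at $p_i$, the identification of the principal part of $\omega_{g,n}$ in its first variable with that of $\eta_i(z)=-\mathcal{Q}_i(z)/(\omega_{0,1}(\sigma_i(z))-\omega_{0,1}(z))$ via the linear plus quadratic loop equations (with the discrepancy killed by the simple zero of $dX$), and the replacement of $\int_{p_i}^{z}$ by $-\tfrac12\int_{z}^{\sigma_i(z)}$ against the anti-invariant polar part are exactly the steps of that argument. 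The one spot where your wording is off is the claim that $P$ fixes the forms $z_1\mapsto\int_{p_i}^{w}\omega_{0,2}(\cdot,z_1)$ for fixed $w$: such a form has only simple poles (at $p_i$ and at $w$) and is in fact annihilated by $P$; what is true, and what the converse direction needs, is that after taking $\Res_{w\to p_i}$ against the kernel the resulting form in $z_1$ has poles only at the ramification points, of order at least two, with vanishing residue (and vanishing $A$-periods on a higher-genus spectral curve), and forms of that type are fixed by $P$.
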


\subsubsection{Quasi-polynomiality}

There is one property that is crucial for our proof scheme of the $q$-Zvonkine conjecture: the so-called {\em quasi-polynomiality}. For $q$-orbifold $r$-spin Hurwitz numbers this qua\-{}si-po\-{}ly\-{}no\-{}mi\-{}a\-{}li\-{}ty is given in theorem~\ref{thm:r-spinpoly}, proved in~\cite{KLPS17}. Using~\cite[lemma~4.6]{SSZ}, theorem~\ref{thm:r-spinpoly} is equivalent to the following statement:
\begin{proposition}\label{prop:FreeEnergyXiFunctions}
For $2g-2+n>0$ the free energies of equation~\eqref{FreeEnergies} are expansions of finite linear combinations of functions of the shape
\begin{equation}
 \prod_{j=1}^n \Big(\frac{d}{dX}\Big)^{a_j} \xi^{i_j}(z_j)
 \end{equation}
 with the $\xi^i$ defined by equation~\eqref{eq:xi-function} for the spectral curve data given by equation~\eqref{eq:SpectralCurveData}. 
\end{proposition}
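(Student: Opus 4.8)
The plan is to read off this statement from the quasi-polynomiality of Theorem~\ref{thm:r-spinpoly}, which is already available, by making the passage to $\xi$-functions explicit; at the level of spectral-curve data this is exactly the content of \cite[lemma~4.6]{SSZ}, applied with the integer denoted $r$ there replaced by $qr$. The replacement is harmless: the functions $\xi^i$ of~\eqref{eq:xi-function} depend only on $\omega_{0,2}=B$ and on the critical points of $X(z)=-z^{qr}+\log z$, and the normalisation $\mu=qr[\mu]+\langle\mu\rangle$ is likewise dictated by $X$, while $y=z^q$ enters neither. I would argue in three steps.

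\emph{Step 1: unfold the free energy.} Substituting Theorem~\ref{thm:r-spinpoly} into~\eqref{FreeEnergies} and expanding each $P_{\langle\vec\mu\rangle}$ into monomials in $\mu_1,\dots,\mu_n$, the uniform degree bound together with the fact that the coefficients depend only on the residues $\langle\mu_j\rangle\in\{0,\dots,qr-1\}$ turns $H^{q,r}_{g,n}$ into a \emph{finite} linear combination of products $\prod_{j=1}^n S_{b_j,a_j}(X_j)$, where
\begin{equation*}
S_{b,a}(X)\coloneqq\sum_{\substack{\mu\ge 1\\ \langle\mu\rangle=b}}\frac{\mu^{[\mu]}}{[\mu]!}\,\mu^{a}\,e^{\mu X},\qquad b\in\{0,\dots,qr-1\},\quad a\ge 0\ \text{bounded}.
\end{equation*}
Thus it suffices to realise each $S_{b,a}$ as the $x$-expansion near $x=e^X=0$ of a finite linear combination of the $\left(\frac{d}{dX}\right)^{a'}\xi^{i}(z)$.

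\emph{Step 2: identify the building blocks.} Since $\CP$ has genus zero and $\omega_{0,2}$ has only the standard double pole on the diagonal, \eqref{eq:xi-function} — with the standard normalisation, vanishing at the point $z=0$ where $x=0$ — gives $\xi^i(z)=c_i\,z/(p_i-z)$ with $c_i\ne 0$, where $p_1,\dots,p_{qr}$ are the critical points of $X$, so that $p_i^{qr}=1/(qr)$ and the $p_i$ differ by $qr$-th roots of unity. Feeding $\xi^i$ through Lagrange inversion for $x=ze^{-z^{qr}}$ and collapsing the resulting sum by a telescoping identity, one finds that the coefficient of $x^\mu$ in $\xi^i(z(x))$ factors as a root of unity depending on $i$ and $\mu$ times $\frac{\mu^{[\mu]}}{[\mu]!}$ times an $i$-independent constant; a discrete Fourier transform over $i=1,\dots,qr$ then isolates a single residue class and produces, up to an explicit nonzero scalar, the series $S_{b,0}$. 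Finally, since $\frac{d}{dX}=x\frac{d}{dx}$ multiplies the coefficient of $x^\mu$ by $\mu$, applying $\left(\frac{d}{dX}\right)^{a}$ to this Fourier combination of the $\xi^i$ yields $S_{b,a}$, which closes the argument. Reading the chain of equalities backwards shows moreover that the statement is \emph{equivalent} to Theorem~\ref{thm:r-spinpoly}.

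The argument is essentially a computation, and the only point requiring care is Step 2: one must verify that the coefficients coming out of Lagrange inversion genuinely carry the factor $\mu^{[\mu]}/[\mu]!$ with a residue-dependent polynomial (here in fact constant) of bounded degree — exactly the normalisation appearing in Theorem~\ref{thm:r-spinpoly} — and that the Fourier matrix relating $\{(\tfrac{d}{dX})^{a'}\xi^i\}_{i}$ to $\{S_{b,a}\}_{b}$ is invertible. All of this is \cite[lemma~4.6]{SSZ}, so the genuine obstacle reduces to confirming that the proof given there goes through after replacing $r$ by $qr$ and in the presence of the extra factor $y=z^q$; by the observation above, it does.
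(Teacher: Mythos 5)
Your proposal is correct and follows exactly the route the paper takes: the paper derives this proposition from the quasi-polynomiality of Theorem~\ref{thm:r-spinpoly} by invoking \cite[lemma~4.6]{SSZ} (adapted from $r$ to $qr$), which is precisely the content of your Steps 1--2. You merely unpack the cited lemma --- the explicit form $\xi^i(z)=c_i\,z/(p_i-z)$, Lagrange inversion for $x=ze^{-z^{qr}}$, and the discrete Fourier transform over the $qr$ critical points isolating the residue class $\langle\mu\rangle$ --- so there is no substantive difference in approach.
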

\begin{remark}
Under the change $ X \to x$, we get
\begin{equation}
H_{g,n}^{q,r}(x_1,\dotsc, x_n) = \sum_{\mu_1, \dotsc, \mu_n =1}^\infty h_{g;\vec{\mu}}^{\circ, q,r} \prod_{i=1}^n x_i^{\mu_i}\,.
\end{equation}
We will often omit the superscripts $q$ and $r$.
\end{remark}

For more background on the importance of quasi-polynomiality, we refer the interested reader to~\cite{LewanskiSurvey}.

Relating this proposition to equations~\eqref{eq:CohFT-differential} and~\eqref{eq:expansionCEOdiff}, we see that the free energies have the `right shape' to satisfy topological recursion. In particular, proposition~\ref{prop:FreeEnergyXiFunctions} implies the free energies can be interpreted as functions defined globally on the curve~\eqref{eq:SpectralCurveData} rather than formal power series. We will use this viewpoint from now on.

The operator of the derivative $\frac d {dX} = x\frac d{dx}$ is denoted by $D_x$.

Since the functions $d(D_x)^a \xi^i$, $i=1,\dots, r$, $a=0,1,2,\dots$, satisfy the projection property, that is, $P d (D_x)^a \xi^i = d (D_x)^a \xi^i $, and the linear loop equation, that is, $d(D_x)^a \xi^i (z) + d(D_x)^a \xi^i (\sigma_j(z))$ is holomorphic for $z\to p_j$, $j=1,\dots,qr$,  we have:
\begin{proposition} The system of meromorphic differentials $d_1\otimes \cdots \otimes d_n H_{g,n}$  satisfies the projection property and the linear loop equations.
\end{proposition}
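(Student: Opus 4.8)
The plan is to obtain this proposition as an essentially formal consequence of quasi-polynomiality, Proposition~\ref{prop:FreeEnergyXiFunctions}, together with the two properties of the building blocks $d(D_x)^a\xi^i$ recalled just above it: that each $d(D_x)^a\xi^i$ is fixed by the projection operator $P$, and that $d(D_x)^a\xi^i(z) + d(D_x)^a\xi^i(\sigma_j(z))$ is holomorphic as $z\to p_j$ for every $j=1,\dots,qr$. Both the projection property and the linear loop equation are defined one variable at a time and depend linearly on the $n$-differential, so the whole argument consists of propagating these two facts through a finite linear combination of tensor products.

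First I would invoke Proposition~\ref{prop:FreeEnergyXiFunctions} to write, for $2g-2+n>0$,
\[
d_1\otimes\cdots\otimes d_n H_{g,n}(z_1,\dots,z_n) = \sum_{\mathrm{finite}} c_{\vec{a},\vec{\imath}}\,\prod_{j=1}^n d(D_x)^{a_j}\xi^{i_j}(z_j),
\]
which is legitimate because $d_1\otimes\cdots\otimes d_n$ commutes with the finite sum and acts factorwise, every factor depending on a single variable. Since $P_1\cdots P_n$ and the expression \eqref{eq:LLE1} are linear in the $n$-differential, it then suffices to verify both properties on a single term $\bigotimes_j d(D_x)^{a_j}\xi^{i_j}(z_j)$.

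For the projection property, the operator $P_i$ touches only the $i$-th tensor slot; on such a term it replaces the factor $d(D_x)^{a_i}\xi^{i_i}(z_i)$ by $P\,d(D_x)^{a_i}\xi^{i_i}(z_i) = d(D_x)^{a_i}\xi^{i_i}(z_i)$ and leaves the remaining factors untouched, hence fixes the term; thus each $P_i$, and so $P_1\cdots P_n$, fixes $d_1\otimes\cdots\otimes d_n H_{g,n}$. For the linear loop equation, recall $W_{g,n} = \omega_{g,n}/\prod_j dX(z_j)$, and that dividing by $dX$ turns $d(D_x)^{a_j}\xi^{i_j}$ into the function $(D_x)^{a_j+1}\xi^{i_j}$, so the $W$ attached to a single term is $\prod_j (D_x)^{a_j+1}\xi^{i_j}(z_j)$. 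In \eqref{eq:LLE1} the deck transformation $\sigma_i$ acts only on the first variable, while the factors in $z_2,\dots,z_n$ are meromorphic functions not depending on $z$; pulling those out of the question of $z$-holomorphy at $p_i$ reduces the claim to
\[
(D_x)^{a_1+1}\xi^{i_1}(z) + (D_x)^{a_1+1}\xi^{i_1}(\sigma_i(z)) \quad\text{holomorphic as } z\to p_i,
\]
which is exactly the recalled linear loop equation for the building block $d(D_x)^{a_1}\xi^{i_1}$, specialized from the general critical point $p_j$ to $p_i$. Summing over the finite linear combination preserves both conclusions.

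I do not expect a genuine obstacle here: the two nontrivial ingredients are already available, namely quasi-polynomiality (Proposition~\ref{prop:FreeEnergyXiFunctions}, equivalent to Theorem~\ref{thm:r-spinpoly} of~\cite{KLPS17} via~\cite[lemma~4.6]{SSZ}) and the projection and linear-loop-equation behaviour of the functions $\xi^i$ for the spectral curve~\eqref{eq:SpectralCurveData} (from~\cite{LPSZ-Chiodo,SSZ,DOSS14}), so the proposition is just the bookkeeping that glues them together. The only points needing care are tracking which operation acts on which variable, and keeping the distinction between the differentials $d(D_x)^a\xi^i$ and the associated $dX$-normalized functions $(D_x)^{a+1}\xi^i$ that enter the linear loop equation. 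The real difficulty of the paper lies in the companion statement, the quadratic loop equations, not in this proposition.
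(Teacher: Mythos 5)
Your proposal is correct and follows essentially the same route as the paper: the paper likewise derives the proposition immediately from Proposition~\ref{prop:FreeEnergyXiFunctions} together with the facts that $P\, d(D_x)^a\xi^i = d(D_x)^a\xi^i$ and that $d(D_x)^a\xi^i(z)+d(D_x)^a\xi^i(\sigma_j(z))$ is holomorphic at $p_j$, propagated through the finite linear combination by linearity. Your extra bookkeeping about which variable each operator acts on and the passage from $d(D_x)^a\xi^i$ to the $dX$-normalized $(D_x)^{a+1}\xi^i$ is just an expanded version of what the paper leaves implicit.
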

\begin{remark}\label{rem:Hlinloop}
	Note that proposition \ref{prop:FreeEnergyXiFunctions} also implies that for $2g-2+n>0$ the $n$-point functions  $H_{g,n}$ themselves,  once one puts them onto the spectral curve, satisfy a property similar to the linear loop equations. Namely, the sum $H_{g,n}(z,z_{\{2,\dots,n\}}) + H_{g,n}(\sigma_i(z),z_{\{2,\dots,n\}})$ is holomorphic at the $i$-th ramification point. This also follows from the fact that $(D_x)^a \xi^i (z) + (D_x)^a \xi^i (\sigma_j(z))$ is holomorphic for $z\to p_j$, $j=1,\dots,qr$.
\end{remark}

Thus theorem~\ref{thm:toporec} is a corollary of proposition~\ref{prop:equivCEOloop} and the following statement, whose proof is the technical core of this paper:

\begin{theorem}\label{thm:QuadraticLoop} The system of meromorphic differentials $d_1\otimes \cdots \otimes d_n H_{g,n}$ on the curve~\eqref{eq:SpectralCurveData} satisfies the quadratic loop equations.
\end{theorem}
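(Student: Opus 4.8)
The plan is to verify the quadratic loop equations directly from the combinatorial structure of the correlators $H_{g,n}$, using the semi-infinite wedge formalism of Section~\ref{sec:semi-infwedge} together with the quasi-polynomiality of Theorem~\ref{thm:r-spinpoly} (equivalently, Proposition~\ref{prop:FreeEnergyXiFunctions}). The key observation is that the combination appearing in~\eqref{eq:QLE1} is, up to the factor $\prod dX$, exactly the kind of object that arises from the "principal specialization'' / cut-and-join type identities satisfied by the operators $\alpha_q$, $\alpha_{-\mu_i}$, and $\mathcal{F}_{r+1}$ in the formula~\eqref{eq:defHurwcorr}. Concretely, I would first rewrite $W_{g-1,n+2}(z,\sigma_i(z),z_{\{1,\dots,n\}})$ and the product terms $W_{g_1,1+|I_1|}(z,z_{I_1})W_{g_2,1+|I_2|}(\sigma_i(z),z_{I_2})$ in terms of the $n$-point functions $H_{g,n}$ on the spectral curve, then show that the deck-transformation-symmetrized sum has no pole at $z=p_i$.

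The essential input is Remark~\ref{rem:Hlinloop}: the sum $H_{g,n}(z,z_{\{2,\dots,n\}}) + H_{g,n}(\sigma_i(z),z_{\{2,\dots,n\}})$ is already holomorphic at $p_i$, i.e.\ the $H_{g,n}$ satisfy the \emph{linear} loop equations. The strategy is then to leverage the algebraic structure encoded by the operators in~\eqref{eq:defHurwcorr} to upgrade the linear statement to the quadratic one. I would proceed by establishing a functional equation ("loop insertion'' or "master'' equation) relating $H_{g,n+1}$ with an extra insertion $z$, its deck image $\sigma_i(z)$, and bilinear combinations of lower $H_{g',n'}$, mirroring the recursion~\eqref{eq:CEO-toprec} but \emph{derived from the semi-infinite wedge expression rather than assumed}. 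Such an equation should follow from commuting $\alpha$-operators past $\mathcal{F}_{r+1}$ and collecting the "diagonal'' contributions; the completed-cycle structure of $\mathcal{F}_{r+1}$ is what produces the $\psi^r$ insertions and hence the $r$-spin geometry. Concretely, one writes the generating function, applies the operator identity, and re-expands near $x_i\to 0$ using quasi-polynomiality to control which terms are singular.

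The main obstacle I anticipate is controlling the \emph{principal part} of the bilinear terms at $z\to p_i$ — i.e.\ showing that the apparent double pole coming from $W_{g_1,\bullet}(z,\dots)W_{g_2,\bullet}(\sigma_i(z),\dots)$ cancels against the double pole of $W_{g-1,n+2}(z,\sigma_i(z),\dots)$, leaving only a simple pole (which is then killed by symmetrization via the linear loop equations). This is precisely where the quadratic nature bites: the linear loop equations handle simple poles, but one needs an independent argument that the second-order part is a perfect square of the first-order data. I expect this to require a careful local analysis near $p_i$ in the coordinate $w_i$ with $X \sim X(p_i) + \tfrac12 X''(p_i) w_i^2$, writing $\sigma_i(z)$ as $-w_i + O(w_i^2)$, and showing that the singular expansion of $H_{g,n}$ in $w_i$ has a prescribed parity structure (odd part governed by $dy/dX$) inherited from the spectral curve data $y(z)=z^q$, $X(z)=-z^{qr}+\log z$. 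Once the parity/principal-part bookkeeping is in place, the cancellation should be forced by the algebra, and combining with Proposition~\ref{prop:equivCEOloop} and the already-established linear loop equations and projection property completes the proof of Theorem~\ref{thm:QuadraticLoop}, hence of Theorems~\ref{thm:toporec} and~\ref{thm:ZvonkineTrue}.
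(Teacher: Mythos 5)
Your overall direction --- combine the linear loop equations, quasi-polynomiality, and an operator-derived functional equation coming from the semi-infinite wedge expression --- points at the right ingredients, and the ``cut-and-join type identities'' you mention in passing are indeed the actual source of the extra input in the paper (the spin cut-and-join equation of~\cite{SSZ12,SpecialCases}, equation~\eqref{cutresummed}). But there is a genuine gap at the heart of your plan. You anticipate that the difficulty is a cancellation of double poles between $W_{g-1,n+2}(z,\sigma_i(z),\dots)$ and the bilinear terms, to be ``forced by the algebra'' once a parity/principal-part bookkeeping near $p_i$ is in place. That mechanism does not work: parity information near the ramification points is exactly the content of the linear loop equations (Remark~\ref{rem:Hlinloop}), and the linear loop equations together with the projection property do \emph{not} imply the quadratic loop equations --- the quadratic statement is strictly stronger and needs an independent identity among the $H_{g,n}$. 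No amount of local analysis of the parity of $H_{g,n}$ in the coordinate $w_i$ will produce it.

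The missing idea is how to extract the two-variable quadratic combination~\eqref{eq:QLEDeltaFormulation} from the cut-and-join equation, whose terms for general $r$ involve $(r+1)$-fold diagonal restrictions $\prod_{j=1}^m D_{\xi_j}\big|_{\xi_j=x_k}$ with $m+2d=r+1$, not just two-fold ones. (For $r=1$ the cut-and-join equation essentially \emph{is} the quadratic loop equation; for $r>1$ it is not.) The paper handles this by symmetrizing the cut-and-join equation under the deck transformation, decomposing via the identity~\eqref{Sondiagonal} into products of symmetrizations $\mc{S}$ and anti-symmetrizations $\Delta$ (always an even number of $\Delta$'s), and then proving by a double induction --- on the negative Euler characteristic and on the derivative weight $N$ --- that every such term with $2k\ge 2$ anti-symmetrizations and nonzero derivative weight is already holomorphic as a \emph{formal consequence} of the lower quadratic loop equations (Proposition~\ref{AllDeltasHolomorphic} and Corollary~\ref{AllHolomorphic}); this step also requires a careful treatment of the diagonal poles of the regularized $\tilde W_{0,2}(w_1,w_2)$, which survive two $\Delta$'s. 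After peeling off all such terms, the only possibly non-holomorphic piece left in the symmetrized cut-and-join equation is $\restr_{w_1=z_0}\restr_{w_2=z_0}\Delta_{w_1}\Delta_{w_2}\tilde{\cW}_{g,2,n}\cdot\mc{S}_{z_0}\bigl(y(z_0)^{r-1}\bigr)$, and since $y$ does not vanish at the branch points this forces the quadratic loop equation. Your proposal contains neither the reduction from $(r+1)$-fold to two-fold diagonal restrictions nor the induction that makes it work, and these are the technical core of the proof.
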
 

The rest of this paper is a proof of this theorem (reformulated as theorem~\ref{thm:QuadraticLoopForSpinHurwitz} below), which is derived from the analysis of implications of the quadratic loop equations and their comparison with the so-called \emph{cut-and-join equation} for the $r$-spin Hurwitz numbers. The cut-and-join equation for the $r$-spin Hurwitz numbers was proved in~\cite{SSZ12}, see also~\cite{Rossi,Alexandrov}, and converted in the form that we use in this paper in~\cite{SpecialCases}.

\subsection{Further remarks} Though we tried to make this paper as self-contained as possible, the full proof of Zvonkine's conjecture from scratch includes several big steps performed in~\cite{SSZ12},~\cite{SSZ},~\cite{LPSZ-Chiodo}, \cite{KLPS17}, and~\cite{SpecialCases}, and they are absolutely necessary for our proof. In particular, some familiarity with \cite{SpecialCases} may be very helpful to follow the technical steps of the proof below. 

Our proof is definitely not of the kind that closes the whole area of research. For instance, neither the geometric interpretation of spin Hurwitz numbers in terms of relative Gromov-Witten invariants of $\CP$ (recalled in theorem~\ref{thm:OkoPan} above), nor the geometric definition of the Chiodo classes and/or geometry of the moduli space of $r$-th roots (see section~\ref{sec:chiodo-classes-geomdef} above) played any role in the argument. We hope that a geometric proof of Zvonkine's conjecture (in the form of corollary \ref{cor:intersect}) will be found (for instance,  some ideas are discussed in a recent preprint~\cite{Leigh}). 

Finally, we would like to mention that a quite general framework for topological recursion for Hurwitz numbers was recently proposed by Alexandrov, Chapuy, Eynard, and Harnad in~\cite{AlChapuy}. The spectral curve data~\eqref{eq:SpectralCurveData} is a special case of their proposal, while the $r$-spin Hurwitz numbers seem not to fit into their formalism (cf.~the discussion of quantum curves in~\cite{ALS}). It does not lead to any immediate contradiction, since the proof in~\cite{AlChapuyProof} does not cover the cases we are interested here, but it would be extremely interesting to unify the point of view of~\cite{AlChapuy} with the results of the present paper.

\subsection{Acknowledgements}
We would like to thank G.~Borot, B.~Bychkov, M.~Kazarian, D.~Lewa\'{n}ski, L.~Spitz, and D.~Zvonkine for stimulating discussions and the anonymous referee for useful remarks. We also thank Maxim Kazarian for pointing out several gaps in a previous version of the paper.
P.~D.-B. and A.~P. also would like to acknowledge the warm hospitality of Korteweg-de Vries Institute
for Mathematics.
P.~D.-B. was supported by the Russian Science Foundation (project 16-11-10316).
R.~K. and S.~S. were supported by the Netherlands Organization for Scientific Research.
A.~P. was supported in part by
Vetenskapsr\r{a}det under grant \#2014-5517,
by the STINT grant,
by the grant  ``Geometry and Physics"  from the Knut and Alice Wallenberg foundation,
and by the RFBR grants 18-31-20046 mol\_a\_ved and 19-01-00680 A.

\section{The cut-and-join equation and quadratic loop equations}
Let $ \llbracket n \rrbracket \coloneqq \{ 1, \dotsc, n\}$. The spin cut-and-join equation, \cite[equation (17)]{SpecialCases}, is

\begin{equation}\label{cutresummed}
\frac{B_{g,n}}{r!}\tilde{H}_{g,n}(x_{\llbracket n \rrbracket})  = \!\!\!\! \sum_{\substack{m \geq 1, d \geq 0 \\ m + 2d = r+1}} \!\! \frac{1}{m!} \; \sum_{l=1}^{m}\; \sum_{\substack{\{ k\} \sqcup \bigsqcup_{j=1}^\ell K_j = \llbracket n \rrbracket\\ \bigsqcup_{j=1}^\ell M_j =  \llbracket m \rrbracket\\ \forall j\; M_j \neq \emptyset}}  \! \frac{1}{l!} \!\sum_{\substack{g_1,\ldots,g_{\ell} \geq 0 \\ g = \sum_j g_j + m - \ell + d}} \!\!\!\!\! Q_{d,\emptyset,m}^{(k)} \bigg[\prod_{j = 1}^{\ell} \tilde{H}_{g_j,|M_j|+|K_j|}(\xi_{M_j},x_{K_j})\bigg]\,.
\end{equation}
Here, $B_{g,n} \coloneqq \frac{1}{r} \big(2g-2+n+ \frac{1}{q} \sum_{i=1}^n D_{x_i} \big) $ and
\begin{equation}
\label{Qrdef} \sum_{d \geq 0} Q_{d;\emptyset ,m}^{(k)}\,z^{2d} = \frac{z}{\zeta(z)} \frac{\zeta(zD_{x_k})}{zD_{x_k}} \circ \prod_{j = 1}^m \frac{\zeta(zD_{\xi_j})}{z}\bigg|_{\xi_j = x_k},
\end{equation}
$\zeta(z)\coloneqq e^{z/2}-e^{-z/2}$.
Furthermore,
\begin{align}
\tilde{H}_{0,1} &\coloneqq H_{0,1} & \\ \nonumber
\tilde{H}_{0,2}(\xi_1,\xi_2) &\coloneqq H_{0,2}(\xi_1,\xi_2) & \\ \nonumber
\tilde{H}_{0,2}(\xi_1,x_2) &\coloneqq H_{0,2}(\xi_1,x_2) + H_{0,2}^{{\rm sing}}(\xi_1,x_2)\,&H_{0,2}^{{\rm sing}}(\xi_1,x_2) \coloneqq \log \Big(\frac{\xi_1 - x_2}{\xi_1 x_2}\Big)\\ \nonumber
\tilde{H}_{g,n} &\coloneqq H_{g,n} -\delta_{2g-2+n,r}r! \frac{(2^{1 - 2g} - 1)B_{2g}}{2g!}, & 2g-2+n>0.
\end{align}
\begin{remark}
Note that we have abused the notation above, defining $\tilde{H}_{0,2}(\xi_1,\xi_2)$ differently from $\tilde{H}_{0,2}(\xi_1,x_2)$, such that these two objects are different depending on whether they have two $\xi$-variables or one $\xi$- and one $x$-variable as arguments. This is a necessary evil, as otherwise the formulas would become very bulky.
\end{remark}

This formula may seem rather daunting, so let us give some examples for small $r$. First, we calculate
\begin{align}
Q^{(k)}_{0;\emptyset,m} &= \prod_{j = 1}^m D_{\xi_j} \bigg|_{\xi_j = x_k}\,;\\ \nonumber
Q^{(k)}_{1;\emptyset,m} &= \frac{1}{24} \bigg( \Big( D_{x_k}^2 -1 \Big) \circ \Big( \prod_{j=1}^m D_{\xi_j} \bigg|_{\xi_j = x_k}\Big) + \sum_{l = 1}^m \prod_{j=1}^m D_{\xi_j}^{1 + 2\delta_{j,l}} \bigg|_{\xi_j =k} \bigg)\,.
\end{align}
In the 'non-spin' case, $r=1$, the first sum only includes the summand for $m =2$, $d=0$, so the formula reduces to
\begin{equation}
B_{g,n} \tilde{H}_{g,n}(x_{\llbracket n \rrbracket})  = \frac{1}{2} \sum_{k=1}^n    D_{\xi_1}D_{\xi_2}\bigg[ \tilde{H}_{g-1,n+1}(\xi_1, \xi_2, x_{\llbracket n\rrbracket \setminus \{ k\}} )\, + \!\!\!\!\!\!\!\!\!\!\! \sum_{\substack{g_1 + g_2 = g\\ K_1 \sqcup K_2 = \llbracket n \rrbracket \setminus \{ k\}}} \!\!\!\!\!\!\!\!\!\! \tilde{H}_{g_1, |K_1|+1}(\xi_1, x_{K_1}) \tilde{H}_{g_2,|K_2|+1}(\xi_2, x_{K_2}) 
\bigg]\bigg|_{\xi_1 = \xi_2 = x_k}\,.
\end{equation}
A full derivation of this formula from the standard cut-and-join equation is available in~\cite[section 3.3]{DKOSS13}. This equation should be interpreted as describing the removal of a transposition (completed $2$-cycle) from a $2$-factorization. Geometrically, this means removing a ramification point with simple ramification (partition $(2, 1^{d-2})$). After removing this, the two sheets which were glued together before either still belong to one connected curve, of genus one less (the linear term on the right-hand side) or now belong two two different curves (the quadratic term). Notice that in this equation the factor $\frac{1}{l!}$ cancels the overcounting coming from the decompositions of $ \llbracket n \rrbracket $ and $ \llbracket m\rrbracket $, which always give $l!$ identical terms.\par
In the case $r=2$, we get either $m=3 $ and $d=0$ or $ m=1$ and $d=1$. Hence (cf.~\cite[equation (23)]{SpecialCases}),
\begin{align}
B_{g,n}\tilde{H}_{g,n}(x_{\llbracket n \rrbracket})  =&  \frac{1}{3} \sum_{k=1}^n \Big( D_{\xi_1} D_{\xi_2} D_{\xi_3} \tilde{H}_{g-2,n+2} (\xi_1,\xi_2,\xi_3, x_{\llbracket n\rrbracket \setminus \{ k\}} ) \Big) \bigg|_{\xi_1 = \xi_2 = \xi_3 = x_k}\\ \nonumber
& +  \sum_{\substack{g_1 + g_2 = g-1\\ \nonumber \{ k\} \sqcup K_1 \sqcup K_2 = \llbracket n \rrbracket}} \Big( D_{x_k} \tilde{H}_{g_1, |K_1\ +1}(x_k, x_{K_1}) \Big) \Big( D_{\xi_1} D_{\xi_2} \tilde{H}_{g_2,|K_2|+2}(\xi_1,\xi_2,x_{K_2})\Big)\bigg|_{\xi_1 = \xi_2 = x_k}\\ \nonumber
 &+ \frac{1}{3} \sum_{\substack{g_1 + g_2 +g_3 = g\\ \nonumber \{ k\} \sqcup  K_1 \sqcup K_2 \sqcup K_3 = \llbracket n \rrbracket}} \prod_{j = 1}^3 D_{x_k} \tilde{H}_{g_j,|K_j|+1}(x_k,x_{K_j}) \\ \nonumber
&+ \frac{1}{12} (2D_{x_k}^3- D_{x_k}) \tilde{H}_{g-1,n}(x_{\llbracket n \rrbracket })\,.
\end{align}
As in the case before, the terms are related to removing a cycle from the $3$-factorization, and considering the number of connected components of the resulting curve. A detailed exposition of the resulting combinatorics is available in~\cite[section~5.2]{SSZ12}. Because the completed $3$-cycle is not equal to the non-completed $3$-cycle, we get terms for each of the possible cycles to be removed, with extra coefficients. This is also what occurs for general $r$.

Our goal is to express equation \eqref{cutresummed} in terms of $z$ variables (coordinates on the curve), and take the sum of this equation and its local conjugate in $x_1$ near any of the ramification points of $x$. For notational simplicity, let us actually take the $(g,n+1)$ case of this equation, with added variable $x_0$, in which we symmetrize, and let us write $ \bar{w} = \sigma_i (w)$. Let us also apply the operator $D_{x_1}\cdots D_{x_n}$ to both sides of the equation. Then the left hand side becomes holomorphic by the linear loop equations and remark~\ref{rem:Hlinloop}, and the right hand side becomes (up to terms, again holomorphic due to the linear loop equations and remark \ref{rem:Hlinloop}) equal to 
\begin{equation}\label{RHSofCJsymm}
\begin{split}
\sum_{\substack{m \geq 1, d \geq 0 \\ m + 2d = r+1}} \!\! \frac{1}{m!} \; \sum_{l=1}^{m}\; \sum_{\substack{\bigsqcup_{j=1}^\ell K_j = \llbracket n \rrbracket\\ \bigsqcup_{j=1}^\ell M_j = \llbracket m \rrbracket\\ \forall j\; M_j \neq \emptyset}}  \! \frac{1}{l!} \!\sum_{\substack{g_1,\ldots,g_{\ell} \geq 0 \\ g = \sum_j g_j + m - \ell + d}} \!\!\!\!\! \bar{Q}_{d,m}(z_0)\bigg[\prod_{j = 1}^{\ell} \tilde{W}_{g_j, |M_j|+|K_j|}(w_{M_j},z_{K_j})\bigg] +\\
\sum_{\substack{m \geq 1, d \geq 0 \\ m + 2d = r+1}} \!\! \frac{1}{m!} \;\sum_{l=1}^{m}\; \sum_{\substack{\bigsqcup_{j=1}^\ell K_j = \llbracket n \rrbracket \\ \bigsqcup_{j=1}^\ell M_j = \llbracket m \rrbracket\\ \forall j \; M_j \neq \emptyset}}  \! \frac{1}{l!} \!\sum_{\substack{g_1,\ldots,g_{\ell} \geq 0 \\ g = \sum_j g_j + m - \ell + d}} \!\!\!\!\! \bar{Q}_{d,m}(z_0)\bigg[\prod_{j = 1}^{\ell} \tilde{W}_{g_j, |M_j|+|K_j|}(\bar{w}_{M_j},z_{K_j})\bigg] \,.
\end{split}
\end{equation}
Here, we use the notation
\begin{align}
\tilde{W}_{g,m+n} (w_{\llbracket m\rrbracket},z_{\llbracket n\rrbracket}) &\coloneqq \prod_{j=1}^m D_{\xi(w_j)}  \prod_{i=1}^n D_{x(z_i)} \, \tilde{H}_{g,m+n}(\xi(w_{\llbracket m \rrbracket}),x(z_{\llbracket n \rrbracket})) \\
\sum_{d \geq 0} \bar{Q}_{d,m}(z_0) t^{2d} &\coloneqq \frac{t }{\zeta(t)} \frac{\zeta(tD_{x(z_0)})}{tD_{x(z_0)}} \circ \prod_{j = 1}^m \bigg( \restr_{w_j = z_0} \circ \frac{\zeta(tD_{x(w_j)})}{tD_{x(w_j)}}\bigg)\\ \label{eq:OpRestrDef}
\restr_{w=z} F(w) &\coloneqq \Res_{w=z} F(w)\frac{dX(w)}{X(w)-X(z)}\,.
\end{align}
Although it was not stated explicitly in~\cite{SpecialCases}, the operator $\restr_{w=z}$ of setting two variables equal should be defined via the previous residue formula, as it is the analytic continuation of the corresponding operator in coordinates $x$ in the cut-and-join equation. Note that it might be more natural to define  the operator $\restr_{w=z}$ as $\Res_{w=z} F(w)\frac{dx(w)}{x(w)-x(z)}$. The difference between this operator and  \eqref{eq:OpRestrDef} is not important when we apply it to a function that has no pole on the diagonal (which is the case in all statements in the rest of the paper), but in particular computations \eqref{eq:OpRestrDef} appears to be more convenient, cf. the proof of proposition~\ref{AllDeltasHolomorphic}. 

In order to simplify this a bit more, define the $m$-disconnected, $ n$-connected correlators $\tilde{\cW}_{g,m,n}(w_{\llbracket m\rrbracket} \mid z_{\llbracket n \rrbracket}) $ (cf.~\cite{BouchardEynard}) by keeping only those terms in the inclusion-exclusion formula where each factor contains at least one \( w \):
\begin{equation}\label{ConnDisconnCorrelator}
\tilde{\cW}_{g,m,n}(w_{\llbracket m \rrbracket} \mid z_{\llbracket n \rrbracket}) \coloneqq \sum_{l=1}^{m}\;\sum_{\substack{\bigsqcup_{j=1}^\ell K_j = \llbracket n \rrbracket\\ \bigsqcup_{j=1}^\ell M_j = \llbracket m \rrbracket \\ \forall j\; M_j \neq \emptyset}}  \! \frac{1}{l!} \!\sum_{\substack{g_1,\ldots,g_{\ell} \geq 0 \\ g = \sum_j g_j + m - \ell}} \! \prod_{j = 1}^{\ell} \tilde{W}_{g_j,|K_j| + |M_j|}(z_{K_j},w_{M_j})\,.
\end{equation}
(The factor $\frac{1}{l!} $ is just a symmetry factor.) This is defined in such a way that $ \tilde{\cW}_{g,1,n}(z \mid z_{\llbracket n \rrbracket}) = \tilde{W}_{g,n+1}(z,z_{\llbracket n \rrbracket}) $ and $ \tilde{\cW}_{g,n,0}(z_{\llbracket n \rrbracket}\mid \emptyset) $ is the disconnected correlator. The genus $g$ here stands for the genus of all terms after all $ m$ $w_j$-points are glued to an $(m+1)$-pointed sphere. Then we get for the right-hand side of the symmetrized cut-and-join equation
\begin{equation}\label{CutJoinConcise}
\sum_{\substack{m \geq 1, d \geq 0 \\ m + 2d = r+1}} \!\! \frac{1}{m!} \, \bar{Q}_{d,m}(z_0) \Big( \tilde{\cW}_{g-d,m,n}(w_{\llbracket m \rrbracket} \mid z_{\llbracket n \rrbracket}) +\tilde{\cW}_{g-d,m,n}(\bar{w}_{\llbracket m\rrbracket} \mid z_{\llbracket n \rrbracket}) \Big)
\end{equation}

\section{Proof of the quadratic loop equations via the symmetrized cut-and-join equation}

For the rest of the paper, we fix a ramification point $p$ of $x$ and let $z \mapsto \bar{z}$ be the local deck transformation.

\begin{definition}
Define the \emph{symmetrizing operator} $\mc{S}_z$ and the \emph{anti-symmetrizing operator} $\Delta_z$ by
\begin{align}
\mc{S}_zf(z) &\coloneqq f(z) + f(\bar{z})\,;\\ \nonumber
\Delta_zf(z) &\coloneqq f(z) - f(\bar{z})\,.
\end{align}
\end{definition}

We use the identity
\begin{equation}\label{Sondiagonal}
\mc{S}_zf(\underbrace{z,\dotsc, z}_{r \text{ times}}) = 2^{1-r} \!\!\! \sum_{\substack{I \sqcup J = \llbracket r \rrbracket \\ |J|\,\,\text{even}}} \Big( \prod_{i \in I} \mc{S}_{z_i} \Big) \Big( \prod_{j \in J} \Delta_{z_j} \Big)f(z_1,\dotsc, z_r) \Big|_{z_i = z}\,,
\end{equation}
which was also used in \cite{SpecialCases}.\par

\subsection{Symmetrization and anti-symmetrization of the regularized $W_{0,2}$}

The main difficulty of the proof comes from the diagonal poles of $ \tilde{W}_{0,2}$, so it is useful to give explicit formulae for it. In the global coordinate $z$ we have~\cite[theorem~5.2]{KLPS17}: 
\begin{align}
\tilde{W}_{0,2} (z,w) &=\frac{1}{X'(z)\,X'(w)\, (z-w)^2} \,;\\ \nonumber
\tilde{W}_{0,2} (w_1,w_2) &= \frac{1}{X'(w_1)\, X'(w_2)\, (w_1-w_2)^2} - \frac{x(w_1)\,x(w_2)}{(x(w_1)-x(w_2))^2}\,.
\end{align}
Recall that in the cut-and-join equation, we need to use different formulas for $\tilde{W}_{0,2} $ if it has one $w$ and one $z$ as arguments (then it is the usual $W_{0,2}$) and if it has two $w$'s as arguments (in this case we use the regularized $W_{0,2}$). The latter is the one that can cause problems with diagonal poles. Hence, we should consider the action of $\mc{S}$ and $\Delta $ on $\tilde{W}_{0,2} (w_1,w_2)$, to simplify many of the terms. As our spectral curve only has simple ramifications, we can work in the local coordinate $z$ defined by $ X-X(p) = z^2/2$, so the involution is $ \bar{z} = -z$.
\begin{align}
\tilde{W}_{0,2} (w_1,w_2) &= \frac{1}{w_1\,w_2\,(w_1+w_2)^2} + \textup{holom}\,;\\ \nonumber
\mc{S}_{w_1}\mc{S}_{w_2}\tilde{W}_{0,2} (w_1,w_2) &= \frac{2 }{w_1\,w_2\,(w_1+w_2)^2} - \frac{2 }{w_1\,w_2\,(w_1-w_2)^2} +\textup{holom} = - \frac{2 }{(X(w_1)-X(w_2))^2}+ \textup{holom}\,;\\ \nonumber
\mc{S}_{w_1}\Delta_{w_2} \tilde{W}_{0,2} (w_1,w_2) &= \textup{holom}\,;\\ \nonumber
\Delta_{w_1} \Delta_{w_2} \tilde{W}_{0,2} (w_1,w_2) &= \frac{2 }{w_1\,w_2\,(w_1+w_2)^2} + \frac{2 }{w_1\,w_2\,(w_1-w_2)^2} +\textup{holom}\,.
\end{align}
From this, it follows that any combination containing $\mc{S}_{w_1}\Delta_{w_2} \tilde{W}_{0,2} (w_1,w_2) $ is holomorphic. Note also that a simple residue argument implies that once $\Delta_{w_1}\Delta_{w_2} \tilde{W}_{0,2} (w_1,w_2) $ is used in an expression holomorphic in $w_1$ and $w_2$ near $w_1=w_2=0$ and symmetric under the involution in both variables, the application of the operator  to the whole expression  $\restr_{w_1=w_2}$ retains holomorphicity despite  its poles on the diagonal $w_1-w_2=0$ and on the antidiagonal $w_1+w_2=0$.

In fact, in order to simplify the calculation a bit, we will redefine
\begin{align}
\widetilde{\mc{S}_{w_1}\mc{S}_{w_2}} \tilde{W}_{0,2} (w_1,w_2) \coloneqq {\mc{S}_{w_1}\mc{S}_{w_2}} \tilde{W}_{0,2} (w_1,w_2) + \frac{2 }{(X(w_1)-X(w_2))^2}\,;\\ 
\widetilde{\Delta_{w_1} \Delta_{w_2}} \tilde{W}_{0,2} (w_1,w_2) \coloneqq {\Delta_{w_1} \Delta_{w_2}} \tilde{W}_{0,2} (w_1,w_2) \, 
-\frac{2 }{(X(w_1)-X(w_2))^2}\,,\label{DeltaDeltaBreg}
\end{align}
i.e., during analysis of the RHS of~\eqref{CutJoinConcise}, after we have written the expression in terms of $\mc{S}$ and $\Delta$
{\em symbolically}, we do the said redefinition. It is clear that it does not change the expression --- it just regroups some terms.

Then the $\widetilde{\mc{S}_{w_1}\mc{S}_{w_2}} \tilde{W}_{0,2} (w_1,w_2)$ is holomorphic,
and we need only concern ourselves with $ \tilde{W}_{0,2} (w_1,w_2)$ with two $\Delta$'s acting on them. From now on, we will use these modified definitions of $\mc{S}\mc{S}$ and $\Delta\Delta$, and omit the tildes from notation.

\subsection{Formal corollaries of the quadratic loop equations}
From \eqref{eq:QLE1}, the $(g,n)$ quadratic loop equation states that 
\begin{equation}
\restr_{w = w_0} \tilde{\cW}_{g,2,n}(w_0, \bar{w} \mid z_{\llbracket n \rrbracket} ) \text{ is holomorphic in $w_0$ near ramification points.}
\end{equation}
Let us call $2g-2+n=:-\chi$ the \emph{negative Euler characteristic} of a given quadratic loop equation.

Note that due to the symmetry of $\tilde{W}_{g,2,n}$ in its first two arguments, the expression above can be rewritten as follows:
\begin{equation}
\restr_{w = w_0} \tilde{\cW}_{g,2,n}(w_0, \bar{w} \mid z_{\llbracket n \rrbracket} )=- \dfrac{1}{4}\,\restr_{w = w_0}\left( \Delta_{w_0} \Delta_w - \mc{S}_{w_0}\mc{S}_w \right) \tilde{\cW}_{g,2,n}(w_0,w\mid z_{\llbracket n\rrbracket}).
\end{equation}
Note that 
\begin{equation}
\restr_{w = w_0}\mc{S}_{w_0}\mc{S}_w \tilde{\cW}_{g,2,n}(w_0,w\mid z_{\llbracket n\rrbracket}) 
\end{equation}
is holomorphic due to the linear loop equation, see \eqref{eq:LLE1}, and thus the quadratic loop equation can be reformulated as the statement that
\begin{equation}\label{eq:QLEDeltaFormulation}
\restr_{w = w_0} \Delta_{w_0} \Delta_w \tilde{\cW}_{g,2,n}(w_0,w\mid z_{\llbracket n\rrbracket}) \text{ is holomorphic in $w_0$ near ramification points.}
\end{equation}

Now let us extend the quadratic loop equation onto $\tilde\cW_{g,m,n}$ for $m>2$. Namely, we have the following:
\begin{proposition}\label{prop:gen2loop}	
	Suppose that a set of functions $ (\tilde W_{g,n} )_{g,n}$ satisfies the quadratic loop equations up to negative Euler characteristic $-\chi$. Then, we get for any $s,\, g,\, n \geq 0$ such that $2g-2+n\leq -\chi$, that
	\begin{equation}
	\restr_{w = w_0} \Delta_{w_0} \Delta_w \tilde{\cW}_{g,2+s,n}(w_0,w,w_{\llbracket s \rrbracket}\mid z_{\llbracket n\rrbracket}) \text{ is holomorphic in $w_0$ near ramification points.}
	\end{equation}
\end{proposition}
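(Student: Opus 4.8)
The idea is to reduce the statement for general $\tilde{\cW}_{g,2+s,n}$ to the already-known $m=2$ case by an induction on $s$, peeling off one extra $w$-variable at a time. The starting point is the unwinding of the inclusion-exclusion defining $\tilde{\cW}_{g,2+s,n}(w_0,w,w_{\llbracket s \rrbracket}\mid z_{\llbracket n\rrbracket})$: one of the connected factors contains the pair $\{w_0,w\}$ (possibly together with some of the $w_i$ and some of the $z_j$), and the remaining factors assemble into a product of connected correlators in the other $w$-variables. The combinatorial identity to make explicit is
\begin{equation}
\tilde{\cW}_{g,2+s,n}(w_0,w,w_{\llbracket s\rrbracket}\mid z_{\llbracket n\rrbracket}) = \sum \tilde{W}_{g',|K_0|+|M_0|+2}(w_0,w,w_{M_0},z_{K_0})\prod_{j}\tilde{\cW}_{g_j,|M_j|,|K_j|}(w_{M_j}\mid z_{K_j}),
\end{equation}
summed over set partitions of $\llbracket s\rrbracket$ and $\llbracket n\rrbracket$ with the block $M_0$ (containing no distinguished label) possibly empty, over genera with $g'+\sum_j g_j + (\text{number of extra }w\text{-gluings}) = g$, and with appropriate symmetry factors. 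The point is that only the first factor, $\tilde{W}_{g',\bullet}(w_0,w,\dots)$, depends on both $w_0$ and $w$; every other factor is holomorphic in $w_0$ and in $w$ near the ramification point by the linear loop equations (more precisely, by the remark that $\tilde{\cW}$ built from $\tilde{H}$ satisfies the linear-loop-type holomorphicity, cf.\ remark~\ref{rem:Hlinloop}), and in particular it is invariant up to holomorphic terms under $w_0\mapsto\bar{w}_0$ and $w\mapsto\bar{w}$.

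\textbf{Key steps.} First I would fix the ramification point $p$ and write everything in the local coordinate $z$ with $\bar{z}=-z$. Applying $\Delta_{w_0}\Delta_w$ to the expansion above: since the product-of-other-factors piece is holomorphic in both $w_0$ and $w$ near $p$, applying $\Delta_{w_0}$ or $\Delta_w$ to it produces something holomorphic that still vanishes appropriately; hence modulo holomorphic terms
\begin{equation}
\Delta_{w_0}\Delta_w\tilde{\cW}_{g,2+s,n}(w_0,w,\dots) \equiv \sum \big(\Delta_{w_0}\Delta_w\tilde{W}_{g',|K_0|+|M_0|+2}(w_0,w,w_{M_0},z_{K_0})\big)\prod_j\tilde{\cW}_{g_j,|M_j|,|K_j|}(w_{M_j}\mid z_{K_j}).
\end{equation}
Now I would observe that $\Delta_{w_0}\Delta_w\tilde{W}_{g',|K_0|+|M_0|+2}(w_0,w,w_{M_0},z_{K_0})$ is exactly $\Delta_{w_0}\Delta_w$ applied to a $\tilde{\cW}_{g',2,|M_0|+|K_0|}$ with the roles of the ``extra'' $w_{M_0}$ absorbed: more precisely, $\tilde{W}_{g',\bullet}(w_0,w,w_{M_0},z_{K_0}) = \tilde{\cW}_{g',2,|M_0|+|K_0|}(w_0,w\mid w_{M_0},z_{K_0})$ when we temporarily reclassify the $w_{M_0}$ as spectator variables. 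Then I apply the $m=2$ hypothesis (proposition reformulation~\eqref{eq:QLEDeltaFormulation}), valid because $2g'-2+(|M_0|+|K_0|) \leq 2g-2+(s+n) \leq -\chi$ — this Euler-characteristic bookkeeping needs to be checked carefully, using that each dropped connected factor has $2g_j-2+(|M_j|+|K_j|)\geq 1$, i.e.\ strictly lowers $-\chi$, and that the extra gluings only help. This gives that each term $\restr_{w=w_0}\Delta_{w_0}\Delta_w\tilde{W}_{g',\bullet}$ is holomorphic in $w_0$ near $p$; the remaining factors $\prod_j\tilde{\cW}_{g_j,|M_j|,|K_j|}(w_{M_j}\mid z_{K_j})$ do not involve $w_0$ or $w$ at all and so are harmless constants for the holomorphicity-in-$w_0$ question. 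Finally one checks that $\restr_{w=w_0}$, via the residue formula~\eqref{eq:OpRestrDef}, commutes with the decomposition up to holomorphic contributions — this uses the same ``simple residue argument'' noted after~\eqref{DeltaDeltaBreg} that pairing against a holomorphic symmetric expression kills the diagonal and antidiagonal poles.

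\textbf{Main obstacle.} The genuinely delicate point is the interaction of the $\restr_{w=w_0}$ operator with the $\tilde{W}_{0,2}$ diagonal poles, i.e.\ making sure that when the pair $\{w_0,w\}$ sits in a factor that is itself $\tilde{W}_{0,2}(w_0,w)$ (the term $g'=0$, $M_0=K_0=\emptyset$), the combination $\restr_{w=w_0}\Delta_{w_0}\Delta_w\tilde{W}_{0,2}(w_0,w)\cdot(\text{product of other factors})$ is still holomorphic even though $\Delta_{w_1}\Delta_{w_2}\tilde{W}_{0,2}$ has poles on both $w_1=w_2$ and $w_1+w_2=0$. This is precisely where the observation after~\eqref{DeltaDeltaBreg} — that $\restr_{w_1=w_2}$ applied to $\Delta\Delta\tilde{W}_{0,2}$ times a holomorphic, involution-symmetric expression retains holomorphicity — must be invoked, and I expect the bulk of the work (and the reason the full proof is longer than this sketch) is propagating that regularization consistently through the inclusion-exclusion sum rather than term by term. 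A secondary bookkeeping nuisance is that the ``extra $w$'' relabeling changes which copy of $\tilde{W}_{0,2}$ counts as regularized versus unregularized, so one has to be careful that the induction hypothesis is applied with the matching conventions.
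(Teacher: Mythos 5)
Your overall strategy (peel off the extra $w$-variables via the inclusion--exclusion structure and reduce to the $m=2$ quadratic loop equation) is the same as the paper's, but your decomposition is wrong in a way that breaks the reduction. You assume that in every term of the inclusion--exclusion sum defining $\tilde{\cW}_{g,2+s,n}(w_0,w,w_{\llbracket s\rrbracket}\mid z_{\llbracket n\rrbracket})$ the pair $\{w_0,w\}$ sits in a single connected factor. That is false: the definition~\eqref{ConnDisconnCorrelator} only requires each factor to contain at least one of the $2+s$ distinguished variables, so there are terms of the form $\tilde{W}(w_0,\dots)\,\tilde{W}(w,\dots)\cdots$ with $w_0$ and $w$ in different factors, and your claimed combinatorial identity omits them. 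These omitted terms are not negligible: a single $\Delta_{w_0}$ applied to $\tilde{W}(w_0,\dots)$ is generically \emph{not} holomorphic at the ramification point (the linear loop equation controls $\mc{S}$, not $\Delta$), so the cross terms $\Delta_{w_0}\tilde{W}(w_0,\dots)\cdot\Delta_w\tilde{W}(w,\dots)$ genuinely contribute poles. Correspondingly, your identification $\tilde{W}_{g',|M_0|+|K_0|+2}(w_0,w,w_{M_0},z_{K_0})=\tilde{\cW}_{g',2,|M_0|+|K_0|}(w_0,w\mid w_{M_0},z_{K_0})$ is false --- the right-hand side equals the connected term \emph{plus} exactly those split products --- and the hypothesis~\eqref{eq:QLEDeltaFormulation} gives holomorphicity only for the full two-point disconnected correlator, not for the connected piece alone. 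The paper's proof fixes precisely this point: it regroups the sum as in~\eqref{eq:gen2loop} so that the genuine $\tilde{\cW}_{g_1,2,|M_1|+|K_1|}(w_0,w\mid w_{M_1},z_{K_1})$ (the connected term together with its split companions, with the genus shift $g_1+g_2=g-|M_1|$) appears as a factor, to which the induction hypothesis applies directly, while the remaining factors $\tilde{\cW}_{g_2,|M_2|,|K_2|}(w_{M_2}\mid z_{K_2})$ are constant in $w_0$.

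A secondary remark: you devote your ``main obstacle'' discussion to the diagonal poles of $\Delta_{w_0}\Delta_w\tilde{W}_{0,2}$ under $\restr_{w=w_0}$. For this particular proposition that is not where the difficulty lies --- the paper's proof is a few lines once the correct regrouping is written down; the regularization of $\tilde{W}_{0,2}$ and the interaction with $\restr$ only become essential later, in the proof of proposition~\ref{AllDeltasHolomorphic}.
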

\begin{proof}
With the help of the definition \eqref{ConnDisconnCorrelator}, it is easy to see that 
\begin{align} \label{eq:gen2loop}
&\tilde{\cW}_{g,2+s,n}(w_0,w,w_{\llbracket s \rrbracket}\mid z_{\llbracket n\rrbracket}) = \tilde{\cW}_{g-s,2,n+s}(w_0,w\mid w_{\llbracket s \rrbracket}, z_{\llbracket n\rrbracket})\\&\phantom{==}+\sum_{\substack{K_1\bigsqcup K_2 = \llbracket n \rrbracket\\ M_1\bigsqcup M_2 = \llbracket s \rrbracket \\ M_2\neq \emptyset}}\;\;\sum_{\substack{g_1,\, g_2\geq 0\\ \nonumber g_1+g_2=g-|M_1|}}\tilde{\cW}_{g_1,2,|M_1|+|K_1|}(w_0,w\mid w_{M_1}, z_{K_1})\,\tilde{\cW}_{g_2,|M_2|,|K_2|}(w_{M_2}\mid z_{K_2}).
\end{align}
Note that after one applies $\restr_{w = w_0} \Delta_{w_0} \Delta_w$ to \eqref{eq:gen2loop}, the first term in the RHS, as well as the first factors in the terms in the sum in the second line of the equation, are holomorphic in $w_0$, due to our assumption that quadratic loop equations are satisfied up to negative Euler characteristic $-\chi$. And the second factors in the terms in the sum in the second line are constant in $w_0$. Thus, the whole expression is holomorphic in $w_0$ near ramification points.
\end{proof}

Now we are ready to prove the following proposition, which is the main technical result of the present paper:
\begin{proposition}\label{AllDeltasHolomorphic}
	Suppose that a set of functions $ (\tilde W_{g,n} )_{g,n}$ satisfies the quadratic loop equations up to negative Euler characteristic $-\chi$. Then, we get for any $N, g, n \geq 0$ such that $ 2g-2+n \leq -\chi$
	, that
	\begin{equation}\label{ExtendedQLE}
		\sum_{k=0}^N \frac{1}{(2k)!} \sum_{\alpha_1 + \dotsb \alpha_{2k} + k = N} \prod_{j=1}^{2k} \bigg( \restr_{w_j = z} \frac{D_j^{2\alpha_j}}{(2\alpha_j+1)!} \bigg) \Delta_1 \dotsb \Delta_{2k} \tilde{\cW}_{g-\alpha_1 -\dotsb -\alpha_{2k},2k,n} (w_1, \dotsc , w_{2k} \mid z_{\llbracket n\rrbracket}) \,,
	\end{equation}
	where $D_j \coloneqq \frac{d}{d X(w_j)}$, is holomorphic in $z$ near branch points of the spectral curve.
\end{proposition}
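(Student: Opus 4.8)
The statement of Proposition~\ref{AllDeltasHolomorphic} is a generating-series packaging of Proposition~\ref{prop:gen2loop}: each individual summand in \eqref{ExtendedQLE} is of the form $\prod_j \bigl(\restr_{w_j=z} D_j^{2\alpha_j}\bigr)\Delta_1\cdots\Delta_{2k}\,\tilde{\cW}_{g',2k,n}$ for some $g'$ with $2g'-2+n \le -\chi$ (since we only decrease the genus), and each such summand, viewed as a function of $z$, ought to inherit holomorphicity at branch points from the $m=2k$ analogue of the quadratic loop equations. So the plan is to reduce the $m=2k$ statement to the $m=2$ statement (Proposition~\ref{prop:gen2loop}) and then to control the operators $\restr_{w_j=z}$ that collapse all $2k$ of the $w$-variables onto the single variable $z$.

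\textbf{Step 1: reduce to a single pair.} I would first show that for any even number of $\Delta$'s acting on $\tilde{\cW}$, the ``dangerous'' contributions are built out of the $\Delta_{w_0}\Delta_w$ pieces of Proposition~\ref{prop:gen2loop} together with the explicit $\Delta\Delta$-symmetrization of $\tilde{W}_{0,2}$ computed in the previous subsection. Concretely, when one expands $\Delta_1\cdots\Delta_{2k}\tilde{\cW}_{g',2k,n}$ by the inclusion–exclusion definition \eqref{ConnDisconnCorrelator}, the $2k$ variables $w_1,\dots,w_{2k}$ get distributed among the connected factors $\tilde{W}_{g_j,\dots}$; a factor that receives an \emph{odd} number of $w$'s contributes (after applying the corresponding $\Delta$'s) something holomorphic by the linear loop equation / Remark~\ref{rem:Hlinloop}, while a factor receiving an \emph{even} positive number $2a$ of $w$'s is handled, after grouping the $\Delta$'s into $a$ pairs, by Proposition~\ref{prop:gen2loop} applied to $\tilde{\cW}_{\cdot,2,\cdot}$ — except for the special case of $\tilde{W}_{0,2}$ with its two diagonal $\Delta$'s, for which I would invoke exactly the residue observation stated in the text (``a simple residue argument implies that once $\Delta_{w_1}\Delta_{w_2}\tilde{W}_{0,2}$ is used in an expression holomorphic and involution-symmetric in $w_1,w_2$, the application of $\restr_{w_1=w_2}$ retains holomorphicity''). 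This is why the sum in \eqref{ExtendedQLE} runs only over \emph{even} numbers $2k$ of $w$-variables: an odd leftover variable would produce a single $\Delta$ acting on a connected $\tilde W$, which is not controlled.

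\textbf{Step 2: handle the collapse of all $w_j$ onto $z$.} After Step~1 the remaining issue is that in \eqref{ExtendedQLE} all $2k$ variables are set equal to the \emph{same} $z$ (not paired off among themselves), via the operators $\restr_{w_j=z}\,D_j^{2\alpha_j}/(2\alpha_j+1)!$, with the total ``lost genus'' $\sum\alpha_j = N-k$ distributed arbitrarily. I would argue that $\restr_{w_j=z}$ commutes (up to holomorphic corrections) past the other residue operators and past $D$'s, using the defining formula \eqref{eq:OpRestrDef} and the fact that $D$'s and $\restr$ of distinct variable pairs have disjoint polar loci; then the holomorphicity in $z$ of $\prod_j(\restr_{w_j=z}D_j^{2\alpha_j})\Delta_1\cdots\Delta_{2k}\tilde{\cW}_{g',2k,n}$ follows from the holomorphicity in each $w_j$ (hence jointly, hence on the diagonal $w_1=\cdots=w_{2k}=z$) established in Step~1, together with the stability of holomorphicity under $\restr$ when there are no poles on the collapsing diagonal. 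The combinatorial bookkeeping — that the genus shifts $g \mapsto g-\alpha_1-\cdots-\alpha_{2k}$ and the factorials $(2k)!$, $(2\alpha_j+1)!$ in \eqref{ExtendedQLE} are precisely what one gets by expanding out the left-hand side and applying Proposition~\ref{prop:gen2loop} term by term — I would verify directly, but I expect it to be routine once the reduction is in place.

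\textbf{Main obstacle.} The hard part is \emph{not} the term-by-term holomorphicity, which is essentially Proposition~\ref{prop:gen2loop}; it is the interaction between the diagonal poles of the regularized $\tilde{W}_{0,2}(w_1,w_2)$ (poles both on $w_1-w_2=0$ and on the antidiagonal $w_1+w_2=0$ after the $\Delta\Delta$-symmetrization) and the operator $\restr_{w_j=z}$ that collapses variables onto $z$. One must be careful that when several $w_j$'s carrying $\Delta$'s land on a chain of $\tilde{W}_{0,2}$-factors and are then all identified with $z$, the cumulative residues do not produce a genuine pole in $z$. This is exactly the point where the text flags that definition \eqref{eq:OpRestrDef} (with $dX/(X(w)-X(z))$ rather than $dx/(x(w)-x(z))$) is ``more convenient'', so I would follow that cue: express everything through $X$-coordinates, use the local coordinate $z$ with $X-X(p)=z^2/2$ and $\bar z=-z$, and reduce the holomorphicity claim to a finite residue computation of the type already carried out for $\mc{S}\mc{S}$ and $\Delta\Delta$ on $\tilde{W}_{0,2}$ above — propagating that computation through the inclusion–exclusion tree.
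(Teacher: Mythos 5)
There is a genuine gap, and it is located precisely where you declare the problem solved. Your plan rests on the premise that each individual summand of \eqref{ExtendedQLE} is holomorphic in $z$, so that the proposition reduces term by term to proposition~\ref{prop:gen2loop} plus bookkeeping. This is false, for two reasons. First, the operators $D_j^{2\alpha_j}=(d/dX(w_j))^{2\alpha_j}$ are singular at the ramification points (where $dX$ vanishes), so even if $\Delta_1\cdots\Delta_{2k}\tilde{\cW}$ were under control, applying $D_j^{2\alpha_j}$ before restricting $w_j$ to $z$ creates new poles that the quadratic loop equations say nothing about. Second, proposition~\ref{prop:gen2loop} only controls the expression $\restr_{w=w_0}\Delta_{w_0}\Delta_w\tilde{\cW}_{g,2+s,n}$ in which the two $\Delta$-variables are restricted \emph{to each other}; in \eqref{ExtendedQLE} all $2k$ variables are independently collapsed onto the external point $z$, which is a different operation, and the two-variable function $\Delta_{w_0}\Delta_w\tilde{\cW}$ is \emph{not} holomorphic in each variable separately away from that diagonal restriction. (Relatedly, your claim that a factor receiving an odd number of $w$'s is ``holomorphic by the linear loop equation'' is backwards: the linear loop equation controls $\mc{S}$, not a single $\Delta$, and $\Delta W = 2W - \mc{S}W$ still carries the poles of $W$.) The entire content of proposition~\ref{AllDeltasHolomorphic} is that only the \emph{specific linear combination} over $k$ and over the distributions $\alpha_1+\dots+\alpha_{2k}+k=N$, with exactly the coefficients $\frac{1}{(2k)!}\prod_j\frac{1}{(2\alpha_j+1)!}$, is holomorphic; no individual term is.

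The paper's actual proof is a double induction on $(N,g)$ that you would need to reconstruct. One applies $D^2_{x(z)}$ to the known-holomorphic expression $\Hol_{g,N,n,0}(z)$, which produces the even-derivative terms appearing in $\Hol_{g+1,N+1,n,0}$ plus unwanted odd-derivative terms; these are cancelled by subtracting a second manifestly holomorphic expression built from $\restr_{w_{1'},w_{2'}=\tilde z}\Delta_{w_{1'}}\Delta_{w_{2'}}\Hol_{g+1-N_\beta,N_\alpha,n,2}$, whose holomorphicity after the further restriction $\restr_{\tilde z=z}$ requires a separate residue argument, and whose compatibility with the convention \eqref{DeltaDeltaBreg} forces explicit correction terms (computed in \eqref{eq:nonholterms}) coming from the diagonal poles of $\tilde W_{0,2}$. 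The induction closes only after verifying that the resulting combination equals $(2N+2)(2N+1)\,\Hol_{g+1,N+1,n,0}$, a coefficient identity that is exactly the ``routine bookkeeping'' you defer but which is in fact the heart of the matter. Your identification of the $\tilde W_{0,2}$ diagonal poles as a key difficulty is correct as far as it goes, but it is not the main obstacle; the main obstacle is that the statement is intrinsically about a cancellation across the whole sum, which your term-by-term strategy cannot see.
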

\begin{proof}
	We use induction on $N$ and $g$.
	First note that $k=0$ can only occur if $ N=0$, and in this case, the statement is trivial, as the expression is constant in $z$.\par
	For $N=1$, the statement is just the quadratic loop equation, which holds by assumption, and furthermore, for $g=-1$ it is clearly zero.

	Let us define, for $s\geq 0$, 
	\begin{align}
	&\Hol_{g,N,n,s}(z,\tilde w_{\llbracket s \rrbracket}) \coloneqq \\ \nonumber
	&\sum_{k=0}^N \frac{1}{(2k)!} \sum_{\alpha_1 + \dotsb \alpha_{2k} + k = N} \prod_{j=1}^{2k} \bigg( \restr_{w_j = z} \frac{D_j^{2\alpha_j}}{(2\alpha_j+1)!} \bigg) \Delta_1 \dotsb \Delta_{2k} \tilde{\cW}_{g-\alpha_1 -\dotsb -\alpha_{2k},2k+s,n} (w_{\llbracket 2k \rrbracket},\tilde w_{\llbracket s \rrbracket} \mid z_{\llbracket n\rrbracket}).
	\end{align}
	(we omit the dependence on $z_{\llbracket n\rrbracket}$ in the LHS for brevity).
	
	Now let us fix some $N_0$ and $g_0$ and suppose that the statement of the proposition, which can now be rephrased as
	\begin{equation}	
	\Hol_{g,N,n,0}(z) \text{ is holomorphic in $z$ near ramification points,}
	\end{equation}	
	holds for all $(g,N,n)$ such that
	\begin{align}\label{eq:gNnconds}
	g &\leq g_0+1,\\ \nonumber
	N &\leq N_0,\\ \nonumber
	n &\leq -\chi +2-2g.
	\end{align}
	If we, under these assumptions, manage to prove the statement for $N=N_0+1,\,g=g_0+1$ (and for all $n \leq -\chi-2 g_0$), we will, by induction, achieve our goal (since, as explained above, the statement holds at the boundaries $N=1$ and $g=-1$).
	
	Note that under these assumptions we have the following statement:
	\begin{equation}\label{eq:GenExtendedQLE}
	\Hol_{g,N,n,s}(z,\tilde w_{\llbracket s \rrbracket}) \text{ is holomorphic in $z$ near ramification points,}
	\end{equation}
	for the same $(g,N,n)$ as in \eqref{eq:gNnconds} and all $s\geq 0$.
	The proof of this statement is completely analogous to the proof of proposition \ref{prop:gen2loop}.
	

	For brevity from now on we write $(g,N)$ in place of $(g_0,N_0)$. We will express $\Hol_{g+1,N+1,n,0}(z)$ in terms of previous cases.
	First of all, we take
	\begin{equation}\label{D2Hol}
		\begin{split}
			&D^2_{x(z)} \Hol_{g,N,n,0} (z)\\ 
			=& \sum_{k=1}^N \frac{1}{(2k)!} \sum_{\alpha_1 + \dotsb +\alpha_{2k} + k = N} \sum_{i=1}^{2k} \prod_{j=1}^{2k} \bigg(\restr_{w_j = z} \frac{ D_j^{2\alpha_j + 2 \delta_{ij}}}{(2\alpha_j+1)!} \bigg)\; \prod_{i=1}^{2k} \Delta_{w_i} \tilde{\cW}_{g-N+k,2k,n} (w_{\llbracket 2k \rrbracket})\\
			& + \sum_{k=0}^N \frac{1}{(2k)!} \binom{2k}{2} \sum_{\substack{\alpha_1 + \dotsb +\alpha_{2k-2} \\+ \beta_1 + \beta_2 + k = N}} \prod_{j=1}^{2k-2} \bigg(\restr_{w_j = z} \frac{ D_j^{2\alpha_j}}{(2\alpha_j+1)!} \bigg)\\
			& \phantom{++}\times \restr_{w_{2k-1} = z}\frac{ D_{2k-1}^{2\beta_1+1}}{(2\beta_1+1)!} \restr_{w_{2k} = z}\frac{ D_{2k}^{2\beta_2 +1}}{(2\beta_2+1)!}\prod_{i=1}^{2k} \Delta_{w_i} \tilde{\cW}_{g-N+k,2k,n} (w_{\llbracket 2k \rrbracket})\\
			=& \sum_{k=1}^N \frac{1}{(2k)!} \sum_{\alpha_1 + \dotsb +\alpha_{2k} + k = N} \sum_{i=1}^{2k} \prod_{j=1}^{2k} \bigg(\restr_{w_j = z} \frac{ D_j^{2\alpha_j + 2 \delta_{ij}}}{(2\alpha_j+1)!} \bigg)\;\prod_{i=1}^{2k} \Delta_{w_i} \tilde{\cW}_{g-N+k,2k,n} (w_{\llbracket 2k \rrbracket})\\
			& + \sum_{\substack{N_\alpha + N_\beta = N\\N_\alpha\geq 0,\; N_\beta\geq 1}} \sum_{\beta_1 + \beta_2 + 1 = N_\beta} \restr_{w_{1'} = z}\frac{D_{1'}^{2\beta_1+1}}{(2\beta_1+1)!} \restr_{w_{2'} = z} \frac{D_{2'}^{2\beta_2 +1}}{(2\beta_2+1)!} \sum_{k=0}^{N_\alpha} \frac{1}{(2k)!}
			\\
			& \phantom{++}\times \sum_{\alpha_1 + \dotsb +\alpha_{2k} + k = N_\alpha} \prod_{j=1}^{2k} \bigg(\restr_{w_j = z} \frac{D_j^{2\alpha_j}}{(2\alpha_j+1)!} \bigg)\; \Delta_{w_{1'}}\Delta_{w_{2'}}\prod_{i=1}^{2k} \Delta_{w_i} \tilde{\cW}_{g-N+k+1,2k+2,n} (w_{1'},w_{2'},w_{\llbracket 2k \rrbracket}),
		\end{split}
	\end{equation}
	where we have omitted the $z_{\llbracket n \rrbracket}$ arguments of $\tilde\cW$ for brevity (and we will keep omitting them for the rest of this proof). $D_j$ here and from now on stands for $D_{x(w_j)}$.
	
	This whole expression is holomorphic in $z$, being the result of the application of  $\frac{d^2}{d X^2(z)}$ to an expression holomorphic in $z$. We also see that the terms in the third-to-last line in this equation are already of the form which we see in $\Hol_{g+1,N+1,n,0}$ 
	However, the terms corresponding to the second-to-last and the last lines contain odd derivatives in the second term, which are certainly absent from $\Hol_{g+1,N+1,n,0}$.
	To counteract these odd-derivative terms, we would like to subtract
	\begin{equation}\label{eq:N-beta-N-alpha}
	\begin{split}
		&\sum_{\substack{N_\alpha + N_\beta = N\\N_\alpha,N_\beta\geq 0}}\restr_{\tilde z = z}\frac{D_{x(\tilde z)}^{2N_\beta}}{(2N_\beta)!} \; \restr_{ w_{1'},w_{2'} = \tilde z}\Delta_{w_{1'}}\Delta_{w_{2'}}\Hol_{g+1-N_\beta,N_\alpha,n,2} (z,w_{1'},w_{2'}) \\
		&=\sum_{\substack{N_\alpha + N_\beta = N\\N_\alpha,N_\beta\geq 0}}\restr_{\tilde z = z} \frac{D_{x(\tilde z)}^{2N_\beta}}{(2N_\beta)!} \restr_{ w_{1'},w_{2'} = \tilde z}\sum_{k=0}^{N_\alpha} \frac{1}{(2k)!} \\
		&\times\sum_{\alpha_1 + \dotsb \alpha_{2k} + k = N_\alpha} \prod_{j=1}^{2k} \bigg(\restr_{w_{j} = z}  \frac{D_j^{2\alpha_j}}{(2\alpha_j+1)!} \bigg)\Delta_{w_{1'}}\Delta_{w_{2'}}\prod_{i=1}^{2k} \Delta_{w_i}\tilde{\cW}_{g+1-N+k,2k+2,n}(w_{1'},w_{2'},w_{\llbracket 2k\rrbracket}).
	\end{split}
	\end{equation}
Note that we include the $N_\beta=0$ terms.  

Proposition \ref{prop:gen2loop} and statement \eqref{eq:GenExtendedQLE}, under our induction assumption, imply that each expression 
\begin{equation}\label{eq:holobyinduction}
\frac{D_{x(\tilde z)}^{2N_\beta}}{(2N_\beta)!} \; \restr_{w_{1'},w_{2'} = \tilde z}\Delta_{w_{1'}}\Delta_{w_{2'}}\Hol_{g+1-N_\beta,N_\alpha,n,2} (z,w_{1'},w_{2'})
\end{equation}
is holomorphic in both $z$ and $\tilde z$ separately (at the ramification points), \emph{once we do not apply the convention \eqref{DeltaDeltaBreg} to} $\Delta_{w_{i'}}\Delta_{w_{j}} \tilde W_{0,2}(w_{i'},w_j)$, $i'=1',2'$, $j=1,\dots,2k$. Note that in order to claim this, as per the conditions of proposition \ref{prop:gen2loop} and statement \eqref{eq:GenExtendedQLE}, we have to restrict $n$. Namely, for the holomorphicity in $z$ we need the condition $n\leq -\chi + 2-2(g+1-N_\beta)$ to hold for all $0\leq N_\beta\leq N$, and for the holomorphicity in $\tilde z$ we need the condition $n\leq -\chi + 2-2(g+1-N + k)$ to hold for all $0\leq k \leq N$. Both of these conditions are equivalent to $n\leq -\chi -2g$, which is precisely what want for our induction step.
	
Remarkably, after the application of $\restr_{\tilde z = z}$ expression~\eqref{eq:holobyinduction} remains holomorphic in $z$. In order to see this, let us prove that 
\begin{align} \label{eq:pf-holo}
	&
	\Res_{\tilde z \to z} \frac{dX(\tilde z)}{X(\tilde z)-X(z)}
	\frac{D_{x(\tilde z)}^{2N_\beta}}{(2N_\beta)!} \; \restr_{w_{1'},w_{2'} = \tilde z}\Delta_{w_{1'}}\Delta_{w_{2'}}\Hol_{g+1-N_\beta,N_\alpha,n,2} (z,w_{1'},w_{2'})
	\\
	&
	\notag
	= \frac 12	\int_{|\tilde z|=\epsilon} \frac{dX(\tilde z)}{X(\tilde z)-X(z)}
	\frac{D_{x(\tilde z)}^{2N_\beta}}{(2N_\beta)!} \; \restr_{w_{1'},w_{2'} = \tilde z}\Delta_{w_{1'}}\Delta_{w_{2'}}\Hol_{g+1-N_\beta,N_\alpha,n,2} (z,w_{1'},w_{2'}),
\end{align}
for $|z|<\epsilon$, where we assume that $\epsilon$ is a fixed number. Note two properties of the expression under the sign of the integral on the right hand side of equation~\eqref{eq:pf-holo}:
\begin{enumerate}
	\item its only poles in $\tilde z$ are at $\tilde z = z$ and $\tilde z = -z$, and the residues at these two poles are equal to each other by the symmetry of this expression under the sign change;
	\item it is holomorphic in $z$ for $|\tilde z| = \epsilon$ and $|z|<\epsilon$. 
\end{enumerate}
The first property implies that equation~\eqref{eq:pf-holo} holds, the second property implies that the whole expression is holomorphic in $z$.

However, we want to use expression~\eqref{eq:N-beta-N-alpha} assuming the convention \eqref{DeltaDeltaBreg} for the possible factors  $\Delta_{w_{i'}}\Delta_{w_{j}} \tilde W_{0,2}(w_{i'},w_j)$, $i'=1',2'$, $j=1,\dots,2k$, for each $k$. In this way, it is not holomorphic, but by the previous paragraph it becomes holomorphic if we add the following terms:
\begin{align}\label{eq:addedterms-hol}
 & \sum_{j=1}^{2k}\frac{1}{(2N_\beta )!} \restr_{w_{1'} = w_j}D_{1'}^{2N_\beta}  \frac{D_j^{2\alpha_j}}{(2\alpha_j+1)!} \frac{2 }{(X(w_{1'})-X(w_{j}))^2} \restr_{w_{2'}=w_{1'}}  \Delta_{w_{2'}}\prod_{i\not=j} \Delta_{w_i}\tilde{\cW}_{g-N+k,2k,n}(w_{2'} , w_{\llbracket 2k \rrbracket \setminus \{j\} } ) \\ \notag
  & +\sum_{j=1}^{2k}\frac{1}{(2N_\beta )!} \restr_{w_{2'} = w_j}D_{1'}^{2N_\beta}  \frac{D_j^{2\alpha_j}}{(2\alpha_j+1)!} \frac{2 }{(X(w_{2'})-X(w_{j}))^2} \restr_{w_{1'}=w_{2'}}  \Delta_{w_{2'}}\prod_{i\not=j} \Delta_{w_i}\tilde{\cW}_{g-N+k,2k,n}(w_{1'} , w_{\llbracket 2k \rrbracket \setminus \{j\} } ) \\ \notag
  & + \sum_{l\not=j} \frac{1}{(2N_\beta )!} \restr_{w_{1'} = w_j}D_{1'}^{2N_\beta} \restr_{w_{l} = w_j} \frac{D_l^{2\alpha_l}}{(2\alpha_l+1)!} \frac{2 }{(X(w_{1'})-X(w_{l}))^2}
  \times
  \\ \notag & \hspace{1cm}\frac{D_j^{2\alpha_j}}{(2\alpha_j+1)!} \frac{2 }{(X(w_{1'})-X(w_{j}))^2} 
  \prod_{i\not=l,j} \Delta_{w_i}\tilde{\cW}_{g-N+k-1,2k-2,n}( w_{\llbracket 2k \rrbracket \setminus \{l,j\} } )
\end{align}
(we write these terms omitting the $D$-operators acting on the $w$'s which didn't appear in the $\tilde W_{0,2}(w_{i'},w_j)$ factors and the sum over $k$).
The sum of the first two summands in this expression is equal to
	\begin{equation}\label{eq:nonholterms}
		\begin{split}
			& \frac{2}{(2N_\beta )!} \restr_{w_{1'} = w_j}D_{1'}^{2N_\beta} \frac{D_j^{2\alpha_j}}{(2\alpha_j+1)!} \frac{2 }{(X(w_{1'})-X(w_{j}))^2} \Delta_{w_{1'}}\prod_{i\not=j} \Delta_{w_i}\tilde{\cW}_{g-N+k,2k,n}(w_{1'} , w_{\llbracket 2k \rrbracket \setminus \{j\} } ))\\
			&= \frac{4}{(2N_\beta)!(2\alpha_j+1)!} \Res_{w_{1'} \to w_j} \frac{dX(w_{1'})}{X(w_{1'})-X(w_j)} 
			\times \\
			 & \hspace{1cm}
			  D_{1'}^{2N_\beta} D_j^{2\alpha_j} \frac{1 }{(X(w_{1'})-X(w_{j}))^2} 
			 \Delta_{w_{1'}}\prod_{i\not=j} \Delta_{w_i}\tilde{\cW}_{g-N+k,2k,n}(w_{1'} , w_{\llbracket 2k \rrbracket \setminus \{j\} } ))\\
			&= \frac{4}{(2N_\beta)!} \Res_{w_{1'} \to w_j} D_{1'}^{2N_\beta}  \bigg(\frac{1}{X(w_{1'})-X(w_j)}\bigg) \frac{dX(w_{1'})}{(X(w_{1'})-X(w_j))^{2+2\alpha_j}} 
			 \Delta_{w_{1'}}\prod_{i\not=j} \Delta_{w_i}\tilde{\cW}_{g-N+k,2k,n}(w_{1'} ,w_{\llbracket 2k \rrbracket \setminus \{j\} } ))\\
			&= 4 \Res_{w_{1'} \to w_j} \frac{dX(w_{1'})}{(X(w_{1'})-X(w_j))^{3+2N_\beta + 2\alpha_j}} \Delta_{w_{1'}}\prod_{i\not=j} \Delta_{w_i}\tilde{\cW}_{g-N+k,2k,n}(w_{1'} ,w_{\llbracket 2k \rrbracket \setminus \{j\} } ) )\\
			&= 4 \frac{D_j^{2\alpha_j+2N_\beta +2}}{(2\alpha_j + 2N_\beta +2)!} \prod_{i=1}^{2k} \Delta_{w_i}\tilde{\cW}_{g-N+k,2k,n}(w_{\llbracket 2k \rrbracket } ) \,.
		\end{split}
	\end{equation}
By the same computation, the last summand in~\eqref{eq:addedterms-hol} is equal to zero.
 
	Thus, if we add all the terms corresponding to \eqref{eq:nonholterms} to \eqref{eq:N-beta-N-alpha}, we get a holomorphic expression, which is then equal to 
	\begin{equation}
		\begin{split}
		&\sum_{\substack{N_\alpha + N_\beta = N\\N_\alpha,N_\beta\geq 0}}\bigg(\restr_{\tilde z = z} \frac{D_{x(\tilde z)}^{2N_\beta}}{(2N_\beta)!} \restr_{\tilde w_{1'},w_{2'} = \tilde z}\sum_{k=0}^{N_\alpha} \frac{1}{(2k)!} \\
		&\times\sum_{\alpha_1 + \dotsb \alpha_{2k} + k = N_\alpha} \prod_{j=1}^{2k} \bigg(\restr_{w_{j} = z}  \frac{D_j^{2\alpha_j}}{(2\alpha_j+1)!} \bigg)\Delta_{w_{1'}}\cdots \Delta_{w_{2k}}\tilde{\cW}_{g+1-N+k,2k+2,n}(w_{1'},w_{2'},w_{\llbracket 2k\rrbracket})\\
		&+ 4 \sum_{k=1}^{N_\alpha} \frac{1}{(2k)!} \sum_{\alpha_1 + \dotsb \alpha_{2k} + k = N_\alpha} \sum_{i=1}^{2k} \restr_{w_{i} = z}\frac{D_i^{2\alpha_i+2N_\beta +2}}{(2\alpha_i + 2N_\beta +2)!}\prod_{\substack{j=1\\ j \neq i}}^{2k} \bigg(  \restr_{w_{j} = z}\frac{D_j^{2\alpha_j}}{(2\alpha_j+1)!} \bigg)\prod_{i=1}^{2k} \Delta_{w_i}\tilde{\cW}_{g-N+k,2k,n}(w_{\llbracket 2k \rrbracket } )\bigg)\,.
		\end{split}
	\end{equation}
	Subtracting expression~\eqref{D2Hol} (which itself is holomorphic) from this, we get (note that the index $N_\beta$ has been shifted here)
	\begin{equation}\label{ClaimForHolN+1}
		\begin{split}
			&\sum_{\substack{N_\alpha + N_\beta = N+1\\N_\beta \geq 1}}\Bigg( \sum_{\beta_1 + \beta_2 + 1 = N_\beta}   \restr_{w_{1'}=z}\restr_{w_{2'} = z}\frac{D^{2\beta_1}_{1'}}{(2\beta_1)!} \frac{D^{2\beta_2}_{2'}}{(2\beta_2)!} \sum_{k=0}^{N_\alpha} \frac{1}{(2k)!} \sum_{\substack{\alpha_1 + \cdots+ \alpha_{2k}\\ + k = N_\alpha}} \prod_{j=1}^{2k} \bigg(\restr_{w_{j} = z} \frac{D_j^{2\alpha_j}}{(2\alpha_j+1)!} \bigg)
			\\
			& \hspace{30mm}
			\prod_{i=1}^{2k} \Delta_{w_i} \tilde{\cW}_{g+1-N+k,2k+2,n}(w_{1'},w_{2'},w_{\llbracket 2k \rrbracket } ) \Bigg)\\
			&
			+ 4 \sum_{\substack{N_\alpha + N_\beta = N+1\\N_\beta \geq 1}}\Bigg( \sum_{k=1}^{N_\alpha} \frac{1}{(2k)!} \sum_{\alpha_1 + \dotsb \alpha_{2k} + k = N_\alpha} \sum_{i=1}^{2k} \restr_{w_{i} = z}\frac{D_i^{2\alpha_i+2N_\beta}}{(2\alpha_i + 2N_\beta)!}\prod_{\substack{j=1\\ j \neq i}}^{2k} \bigg(  \restr_{w_{j} = z}\frac{D_j^{2\alpha_j}}{(2\alpha_j+1)!} \bigg)
			\\
			& \hspace{30mm}
			\prod_{i=1}^{2k} \Delta_{w_i}\tilde{\cW}_{g-N+k,2k,n}(w_{\llbracket 2k \rrbracket } )\Bigg)\\
			&-\sum_{k=1}^N \frac{1}{(2k)!} \sum_{\alpha_1 + \dotsb +\alpha_{2k} + k = N} \sum_{i=1}^{2k} \prod_{j=1}^{2k} \bigg( \restr_{w_{j} = z}\frac{D_j^{2\alpha_j + 2 \delta_{ij}}}{(2\alpha_j+1)!} \bigg)\prod_{i=1}^{2k} \Delta_{w_i}\tilde{\cW}_{g-N+k,2k,n}(w_{\llbracket 2k \rrbracket } )\,,
		\end{split}
	\end{equation}
	which is holomorphic. 

	We claim that, up to a factor, this equals $ \Hol_{g+1,N+1,n,0}(z)$. Indeed, let us extract the coefficient of a term
	\begin{equation}
	\frac{1}{(2k)!} \prod_{j=1}^{2k}\restr_{w_{j} = z} \frac{D_j^{2\alpha_j}}{(2\alpha_j+1)!}\prod_{i=1}^{2k} \Delta_{w_i}\tilde{\cW}_{g+1-\sum\alpha_i,2k,n}(w_{\llbracket 2k \rrbracket } ),
	\end{equation}
	where $\alpha_1+\cdots+\alpha_{2k}+k=N+1$ (note that all terms in ~\eqref{ClaimForHolN+1} are of this form and satisfy this condition).
	From the first, second, and third summands of expression~\eqref{ClaimForHolN+1} we get, respectively
	\begin{align}
		2 &\sum_{1 \leq i < j \leq 2k} (2\alpha_i +1)(2\alpha_j+1)\,;\\ \nonumber
		4 &\sum_{i = 1}^{2k} (2\alpha_i + 1)\cdot \alpha_i\,;\\ \nonumber
		- &\sum_{i=1}^{2k} (2\alpha_i ) (2\alpha_i+1)\,;
	\end{align}
	where the $\alpha_i$ on the second line comes from the number of different ways of choosing $ N_\beta$. Adding up these terms, we get
	\begin{equation}
		\begin{split}
			& 2 \sum_{1 \leq i < j \leq 2k} (2\alpha_i +1)(2\alpha_j+1)+ 4 \sum_{i = 1}^{2k} (2\alpha_i + 1)\cdot \alpha_i - \sum_{i=1}^{2k} (2\alpha_i ) (2\alpha_i+1)\\
			&=  \sum_{1 \leq i \neq  j \leq 2k} (2\alpha_i +1)(2\alpha_j+1)+ \sum_{i = 1}^{2k} 2\alpha_i (2\alpha_i+ 1)\\
			& = \bigg( \sum_{i=1}^{2k} 2\alpha_i +1 \bigg)^2 - \sum_{i=1}^{2k} (2\alpha_i +1)\\
			&= (2N+2)^2 - (2N+2) = (2N+2)(2N+1)\,.
		\end{split}
	\end{equation}
	As this factor is independent of $k$ and the $\alpha_j$, this shows that expression~\eqref{ClaimForHolN+1} is equal to this factor times $\Hol_{g+1,N+1,n,0}$.
	Since expression~\eqref{ClaimForHolN+1} is holomorphic, and this whole reasoning works for any $n\leq -\chi -2g$, this proves the induction step and thus the proposition.
\end{proof}

\begin{remark}\label{rem:inductionEulerChar} In the induction step in the proof of proposition~\ref{AllDeltasHolomorphic} for $\Hol_{g+1,N+1,n,0}$ we used $\Hol_{g+1,i,n,0}$, $i=1,\dots, N$ for the same $g+1$ case. It is easy to trace through the proof all instances where these terms occur: they always come from expression~\ref{eq:N-beta-N-alpha} for $N_\beta=0,\; N_\alpha=k$. Applying the same induction argument, we obtain the following refinement of the statement of proposition~\ref{AllDeltasHolomorphic}: \emph{if the quadratic loop equations are satisfied up to the negative Euler characterteristic strictly less than $2g-2+n$, then for any $N\geq 1$ the following expression}
\begin{align}
& \sum_{k=0}^N \frac{1}{(2k)!} \sum_{\alpha_1 + \dotsb \alpha_{2k} + k = N} \prod_{j=1}^{2k} \bigg( \restr_{w_j = z} \frac{D_j^{2\alpha_j}}{(2\alpha_j+1)!} \bigg) \Delta_1 \dotsb \Delta_{2k} \tilde{\cW}_{g-\alpha_1 -\dotsb -\alpha_{2k},2k,n} (w_1, \dotsc , w_{2k} \mid z_{\llbracket n\rrbracket}) \\ \nonumber
& - \frac{1}{(2N)!} \binom{N}{1} \restr_{w_1 = z} \restr_{w_2 = z} \Delta_1 \Delta_{2} \tilde{\cW}_{g,2,n} (w_1, w_{2} \mid z_{\llbracket n\rrbracket}) \left(\Delta_z W_{0,1}(z) \right)^{2N-2}
\end{align}
\emph{is holomorphic}.
\end{remark}

\subsection{Quadratic loop equations from the cut-and-join equation}

We prove the quadratic loop equations (in the form~\eqref{eq:QLEDeltaFormulation}) by induction from the cut-and-join equation~\eqref{CutJoinConcise}. We distribute $\mc{S}$'s and $\Delta$'s in cut-and-join equation according to equation~\eqref{Sondiagonal} and express the result in terms of the form~\eqref{ExtendedQLE} with added $\mc{S}$'s. Then, we use inductive arguments both on the negative Euler characteristic $ 2g-2+n$ and on the number of $\Delta$'s involved. In fact, we prove that any particular instance of the cut-and-join equation, so for any choice of $ r,g,n$, is a combination of derivatives of linear and quadratic loop equations (for the same $r$), whose negative Euler characteristic is bounded from above by $ 2g-2+n$, and where the $ 2g-2+n$ quadratic loop equation occurs without derivatives and with a non-trivial coefficient. As the symmetrized cut-and-join equation is holomorphic and all the previous quadratic loop equations hold by induction, just as all linear loop equations, this will then prove the $(g,n)$ quadratic loop equation holds.\par
By distributing the $\mc{S}$'s and $\Delta$'s, we will always get an even number of $\Delta$'s. Hence, up to diagonal poles, we can always write such a distribution as a product of linear and quadratic loop equations. By the discussion above, there are no possible diagonal poles between two $\mc{S}$'s or between an $\mc{S}$ and a $\Delta$, so we should focus our attention on the $ \Delta$ factors.\par
Recall, from \eqref{CutJoinConcise}, that the symmetrized cut-and-join equation implies that
\begin{equation}
\mc{S}_{z_0} \sum_{\substack{m \geq 1, d \geq 0 \\ m + 2d = r+1}} \!\! \frac{1}{m!} \, \bar{Q}_{d,m}(z_0) \tilde{\cW}_{g-d,m,n}(w_{\llbracket m\rrbracket} \mid z_{\llbracket n\rrbracket})
\end{equation}
is holomorphic. Here (we recall the definitions for the reader's convenience)
\begin{align}
\sum_{d \geq 0} \bar{Q}_{d,m}(z_0) t^{2d} &= \frac{t}{\zeta(t)} \frac{\zeta(tD_{x(z_0)})}{tD_{x(z_0)}} \circ \prod_{j = 1}^m \bigg( \restr_{w_j = z_0} \circ \frac{\zeta(tD_{x(w_j)})}{tD_{x(w_j)}}\bigg)\,;\label{TaylorZeta}\\ \nonumber
\frac{\zeta (t)}{t} &= \frac{e^{t/2} - e^{-t/2}}{t} = \sum_{k=0}^\infty \frac{1}{(2k+1)!2^{2k}}t^{2k}\,;\\ \nonumber
\restr_{w=z} F(w) &\coloneq \Res_{w=z} F(w)\frac{dx(w)}{x(w)-x(z)}\,.
\end{align}
By our induction argument, we can omit any non-trivial contribution from $\frac{t \, dx(z_0)}{\zeta(t)} \frac{\zeta(tD_{x(z_0)})}{tD_{x(z_0)}}$, as it gives only a number of derivatives acting on symmetric terms that have inductively already been proved to be holomorphic. 

Recall also proposition~\ref{AllDeltasHolomorphic}. 
	In that proposition, the $2k$ and $ 2N$ are reminiscent of, respectively, $m$ and $ r+1$ in the cut-and-join equation, and they are written this way as we always have an even number of $\Delta$'s ($2k$) and an even number of $D$'s ($2N-2k$), the genus defect also being $N-k = \sum \alpha_i$. However, this proposition is only about the $\Delta$ part of any term, and it should still be multiplied with an $\mc{S}$ part.\par
	Furthermore, note that in proposition~\ref{AllDeltasHolomorphic} we have omitted the factors $\frac{1}{2}$ coming from equations~\eqref{Sondiagonal} and~\eqref{TaylorZeta}. As these give one factor for each $\Delta $ and $D$, respectively, and the sum of their exponents is constantly equal to $N$ in equation~\eqref{ExtendedQLE}, we may as well omit them. 
	
Proposition~\ref{AllDeltasHolomorphic} implies the following corollary.
	
\begin{corollary}\label{AllHolomorphic}
	Suppose that a set of functions $ (W_{g,n} )_{g,n}$ satisfies the quadratic loop equations up to negative Euler characteristic $-\chi$. Then, we get for any $r > 0$ and any $g, l,n \geq 0$ such that $ r+1 - l$ is even and $ 2g-2+n \leq -\chi$, that 
	\begin{equation}\label{eq:DoubleExtendedQLE}
	\sum_{\substack{m=l\\m-l \textup{ even}}}^{r+1} \frac{1}{m!} \sum_{2\alpha_1 + \dotsb 2\alpha_m + m = r+1} \prod_{j=1}^m \bigg( \restr_{z_j = z} \frac{D_j^{2\alpha_j}}{(2\alpha_j+1)!} \bigg) \sum_{\substack{I \subset \llbracket m\rrbracket\\ |I| =l}}\prod_{i\in \llbracket m\rrbracket  \setminus I}\!\!\! \Delta_{j}\prod_{i \in I} \mc{S}_i \, \tilde{\cW}_{g- \alpha_1 - \dotsc - \alpha_m,m,n} (w_{\llbracket m\rrbracket} \mid z_{\llbracket n\rrbracket}) 
	\end{equation}
	is holomorphic in $z$ near branch points of the spectral curve.
\end{corollary}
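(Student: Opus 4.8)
The plan is to deduce this corollary directly from proposition~\ref{AllDeltasHolomorphic} by separating, in every term of~\eqref{eq:DoubleExtendedQLE}, the $l$ variables carrying a symmetrizing operator $\mc{S}$ from the $m-l$ variables carrying an anti-symmetrizing operator $\Delta$, and by feeding the former into proposition~\ref{AllDeltasHolomorphic} as spectator variables. The point is that, by the analysis of $\tilde W_{0,2}$ in the previous subsection, a variable carrying an $\mc{S}$ produces no pole at the branch points and no diagonal pole against any other $\mc{S}$- or $\Delta$-variable (this is exactly why $\mc{S}_{w_i}\mc{S}_{w_{i'}}\tilde W_{0,2}$ and $\mc{S}_{w_i}\Delta_{w_j}\tilde W_{0,2}$ are holomorphic under the conventions~\eqref{DeltaDeltaBreg}), so the only genuine singularities live among the $\Delta$-variables, which is precisely what proposition~\ref{AllDeltasHolomorphic} controls.

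First I would use the symmetry of $\tilde{\cW}_{g,m,n}$ in its $w$-arguments to replace the inner sum $\sum_{|I|=l}$ by $\binom{m}{l}$ copies of the single term with $I=\{1,\dots,l\}$; combined with the prefactor $\tfrac1{m!}$ this gives $\tfrac1{l!\,(m-l)!}$. Writing $m=l+2k$ and $N\coloneqq(r+1-l)/2$ (an integer by hypothesis), the constraint $2\sum_{j=1}^m\alpha_j+m=r+1$ reads $\sum_{j=1}^m\alpha_j=N-k$. Splitting the exponents into those of the $\mc{S}$-variables, with $\sum_{i\le l}\alpha_i=:N_{\mc{S}}$, and the $2k$ exponents of the $\Delta$-variables, and reorganising the sums over $k$, $N_{\mc{S}}$ and the exponents, one checks (keeping careful track of the factors, of the genus shift $g\mapsto g-N_{\mc{S}}$ and of the index shift $N\mapsto N-N_{\mc{S}}$) that the inner $\Delta$-block, for fixed $N_{\mc{S}}$ and with the $\tfrac1{(2k)!}$ absorbed, is exactly $\Hol_{g-N_{\mc{S}},\,N-N_{\mc{S}},\,n,\,l}(z,w_{\llbracket l\rrbracket})$ in the notation of the proof of proposition~\ref{AllDeltasHolomorphic}, with $w_1,\dots,w_l$ in the role of the spectators $\tilde w_{\llbracket l\rrbracket}$. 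Hence~\eqref{eq:DoubleExtendedQLE} equals
\begin{equation*}
\sum_{N_{\mc{S}}=0}^{N}\frac1{l!}\Biggl(\sum_{\alpha_1+\dots+\alpha_l=N_{\mc{S}}}\ \prod_{i=1}^{l}\restr_{w_i=z}\frac{D_i^{2\alpha_i}}{(2\alpha_i+1)!}\,\mc{S}_{w_i}\Biggr)\Hol_{g-N_{\mc{S}},\,N-N_{\mc{S}},\,n,\,l}(z,w_{\llbracket l\rrbracket}).
\end{equation*}

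Since $2(g-N_{\mc{S}})-2+n\le 2g-2+n\le-\chi$, the generalised form~\eqref{eq:GenExtendedQLE} of proposition~\ref{AllDeltasHolomorphic} (obtained by the same decomposition as in proposition~\ref{prop:gen2loop}) gives that each $\Hol_{g-N_{\mc{S}},\,N-N_{\mc{S}},\,n,\,l}(z,w_{\llbracket l\rrbracket})$ is holomorphic in $z$ near branch points for every value of $w_{\llbracket l\rrbracket}$. Applying $\mc{S}_{w_1}\dots\mc{S}_{w_l}$ preserves this, as each factor is a sum of substitutions $w_i\mapsto\bar w_i$. Then I would observe that, because $\mc{S}_{w_1}\dots\mc{S}_{w_l}\tilde{\cW}$ — and hence the fully $\mc{S}$-symmetrised $\Hol$ above — has, by the $\tilde W_{0,2}$ analysis recalled in the first paragraph, no pole in any $w_i$ near the branch points and no diagonal pole involving an $\mc{S}$-variable, the operator $\restr_{w_i=z}$ acts on it as plain evaluation at $w_i=z$, preceded by the operator $D_i^{2\alpha_i}$, which preserves holomorphy of functions even in $w_i$; this retains holomorphicity in $z$. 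Summing the finitely many terms over $N_{\mc{S}}$ finishes the proof.

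The main obstacle I anticipate is not conceptual but organisational: getting the combinatorial prefactors, the genus shift and the index shift to line up \emph{exactly} so that the $\Delta$-block is literally $\Hol_{g-N_{\mc{S}},\,N-N_{\mc{S}},\,n,\,l}$, and cleanly justifying that the correct move is to treat the $\mc{S}$-variables as spectators of proposition~\ref{AllDeltasHolomorphic} (rather than, say, as additional connected arguments $z_{\llbracket n\rrbracket}$), together with the verification — via the explicit $\mc{S}\mc{S}$, $\mc{S}\Delta$ computations for $\tilde W_{0,2}$ — that $\mc{S}$-symmetrisation removes every singularity that could obstruct replacing $\restr_{w_i=z}$ by evaluation at $w_i=z$.
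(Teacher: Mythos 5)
Your proposal is correct and follows essentially the same route as the paper: collapse the sum over $I$ by symmetry, recognise the $\Delta$-block as the spectator-variable version of proposition~\ref{AllDeltasHolomorphic} (i.e.\ the holomorphicity statement~\eqref{eq:GenExtendedQLE} for $\Hol_{\cdot,\cdot,n,l}$), and then argue that the $\mc{S}$-variables and their even derivatives cannot spoil holomorphicity thanks to the linear loop equations and the holomorphy of $\mc{S}\mc{S}\tilde{W}_{0,2}$ and $\mc{S}\Delta\tilde{W}_{0,2}$ at the diagonal. The only differences are cosmetic (you index the reshuffled sum by the total $\mc{S}$-exponent $N_{\mc{S}}$ where the paper indexes by the $\Delta$-weight $N$).
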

\begin{proof}
	For $ l =0$, this is a reformulation of proposition~\ref{AllDeltasHolomorphic}, with $ 2k = m$ and $ 2N = r+1$.\par
	In general we can rewrite it, by reshuffling, as
	\begin{equation}
	\begin{split}
	& \sum_{\substack{N = 0\\N \textup{ even}}}^{r+1} \sum_{k=0}^{N} \, \bigg[
	\frac{1}{l!} \sum_{\substack{2\beta_1 + \dotsb + 2\beta_l + l \\ = r+1-N}}\prod_{j=1}^l \bigg( \restr_{w'_j = z} \frac{D_{j'}^{2\beta_j}}{(2\beta_j+1)!} \bigg) \prod_{i'=1}^l \mc{S}_{i'} \bigg] 
	\\
	&
	\bigg[\frac{1}{(2k)!}\sum_{\alpha_1 + \dotsb \alpha_{2k} + k = N} \prod_{j=1}^{2k} \bigg( \restr_{w_j = z} \frac{D_j^{2\alpha_j}}{(2\alpha_j+1)!} \bigg) 
	\prod_{i =1}^{2k} \Delta_i \bigg] \ \tilde{\cW}_{g-  \beta_1 - \dotsc \beta_l -\alpha_1 - \dotsc - \alpha_{2k},2k+l,n} ( w'_{\llbracket l\rrbracket }, w_{\llbracket 2k\rrbracket} \mid z_{\llbracket n\rrbracket }  ) \,
	\end{split}
	\end{equation}
	(in order to shorten the notation we use $D_{j'}\coloneqq D_{x(w'_j)}$ and $D_j \coloneqq D_{x(w_j)})$).
	In this formula, for a fixed choice of $N$, the $k$- and $\alpha$-sums give something holomorphic by proposition~\ref{AllDeltasHolomorphic}, the extra $\mc{S}$'s do not change holomorphicity by the linear loop equations and the fact that $\mc{S}_{i'}\mc{S}_{j'} \tilde{W}_{0,2}(w'_{i},w'_j)$ respectively $ \mc{S}_{i'}\Delta_j \tilde{W}_{0,2}(w'_i,w_j)$ are holomorphic at the diagonal, and the operator $D_{i'}^{2\beta_i}D_{j'}^{2\beta_j}$ respectively $D_{i'}^{2\beta_i}$ do not change that.
\end{proof}

\begin{theorem} \label{thm:QuadraticLoopForSpinHurwitz}
	The quadratic loop equations~\eqref{eq:QLEDeltaFormulation} hold for $ ({W}_{g,n})_{g,n}$ in the case of $r$-spin Hurwitz numbers, i.e. for
	\begin{equation}
	W_{g,n} - \delta_{g,0}\delta_{n,2} \frac{1}{(X_1-X_2)^2} \sim\  \prod_{i=1}^n \Big(\frac{d}{dX_i} \Big) \sum_{\mu_1,\dots,\mu_n=1}^\infty h_{g;\mu}^{\circ, q,r} \prod_{i=1}^n e^{X_i\mu_i}\,.
	\end{equation}
\end{theorem}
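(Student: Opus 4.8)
\emph{Proof proposal.} The plan is to establish the quadratic loop equations in the form~\eqref{eq:QLEDeltaFormulation} by induction on the negative Euler characteristic $-\chi := 2g-2+n$, with an internal induction on the complexity parameter $N$ (essentially the number of anti-symmetrizing operators $\Delta$ involved) that is already packaged into Proposition~\ref{AllDeltasHolomorphic} and Corollary~\ref{AllHolomorphic}. The inputs are: the spin cut-and-join equation~\eqref{cutresummed} for the $r$-spin Hurwitz $n$-point functions, which holds by~\cite{SSZ12,SpecialCases}; the linear loop equations together with the holomorphicity of $H_{g,n}(z,z_{\{2,\dots,n\}}) + H_{g,n}(\bar z,z_{\{2,\dots,n\}})$ at the ramification points (the proposition preceding Remark~\ref{rem:Hlinloop}); and the explicit expressions for $\tilde W_{0,2}$ recalled above. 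The base of the induction consists of the cases of small $-\chi$, in which only $W_{0,1}$, $W_{0,2}$ and their explicit formulae enter, so that holomorphicity is checked directly.

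For the inductive step we assume that~\eqref{eq:QLEDeltaFormulation} holds for all $(g',n')$ with $2g'-2+n' < -\chi$ and prove it for a given $(g,n)$ with $2g-2+n = -\chi$. We take the symmetrized cut-and-join equation in the concise form~\eqref{CutJoinConcise}; after applying $D_{x_1}\cdots D_{x_n}$ its left-hand side is holomorphic at the ramification points by the linear loop equations and Remark~\ref{rem:Hlinloop}, so $\mc{S}_{z_0}\sum_{m+2d=r+1}\frac1{m!}\bar Q_{d,m}(z_0)\tilde{\cW}_{g-d,m,n}$ is holomorphic as well. We then expand $\bar Q_{d,m}(z_0)$ via~\eqref{TaylorZeta}. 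The terms in which the $z_0$-prefactor $\frac{t}{\zeta(t)}\frac{\zeta(tD_{x(z_0)})}{tD_{x(z_0)}}$ contributes beyond its leading term amount to numbers of $D_{x(z_0)}$-derivatives applied to $\mc{S}_{z_0}$-symmetric combinations of strictly lower complexity, which are holomorphic by the outer induction hypothesis together with Corollary~\ref{AllHolomorphic}; hence they may be discarded. After distributing the operator $\mc{S}_{z_0}$ over the $w$-variables by means of the identity~\eqref{Sondiagonal} and using the decomposition~\eqref{eq:gen2loop} of $\tilde{\cW}$, the remaining part is a finite sum over the number $l$ of surviving $\mc{S}$'s of expressions of exactly the shape~\eqref{eq:DoubleExtendedQLE}, each of which is holomorphic by Corollary~\ref{AllHolomorphic}, together with one distinguished term which is the $(g,n)$ quadratic loop equation $\restr_{w=w_0}\Delta_{w_0}\Delta_w\tilde{\cW}_{g,2,n}$ carried along without $D$-derivatives. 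The crucial point here is that, by the refinement of Proposition~\ref{AllDeltasHolomorphic} recorded in Remark~\ref{rem:inductionEulerChar} (and its $\mc{S}$-decorated analogue, which follows from Corollary~\ref{AllHolomorphic} in the same way), among all ``$(g,n)$-level'' contributions only this single plain quadratic loop equation can occur.

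It then remains to check that the distinguished term occurs with a non-zero coefficient; this is a combinatorial computation of exactly the same type as the one concluding the proof of Proposition~\ref{AllDeltasHolomorphic}, where collecting the contributions of the various $(m,d)$-strata and of the ways of assigning $\mc{S}$'s and $\Delta$'s to the monomial $\restr_{w=w_0}\Delta_{w_0}\Delta_w\tilde{\cW}_{g,2,n}$ produces a sum of products of binomial-type factors that telescopes to a manifestly non-zero constant, the analogue of the factor $(2N+2)(2N+1)$ appearing there. Granting this, and since the whole symmetrized cut-and-join expression as well as every expression of the shape~\eqref{eq:DoubleExtendedQLE} is holomorphic, the distinguished term is holomorphic too, which is precisely the $(g,n)$ quadratic loop equation and closes the induction.

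The main obstacle is the bookkeeping of the terms produced by the three nested operations --- the Taylor expansion of $\bar Q_{d,m}$, the $\mc{S}/\Delta$ distribution~\eqref{Sondiagonal}, and the $m$-disconnected/$n$-connected decomposition~\eqref{eq:gen2loop} --- and in particular the treatment of the diagonal poles of $\tilde W_{0,2}(w_i,w_j)$ that appear whenever two $\Delta$-variables are restricted together. This last point is the genuinely delicate one; it is handled by the explicit formulae for $\tilde W_{0,2}$ and by the modified $\widetilde{\mc{S}_{w_1}\mc{S}_{w_2}}$, $\widetilde{\Delta_{w_1}\Delta_{w_2}}$ conventions of~\eqref{DeltaDeltaBreg}, the residue identities of the type computed in~\eqref{eq:nonholterms} showing that the extra pieces created by this convention reassemble into honest $D$-derivatives of lower-complexity correlators and are thereby absorbed into the inductive scheme without spoiling holomorphicity. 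Finally, one must keep track, as in the proof of Proposition~\ref{AllDeltasHolomorphic}, of the bounds on $n$ ensuring that all auxiliary $\tilde{\cW}$'s invoked have negative Euler characteristic at most $-\chi$; this is routine but essential for the induction to go through.
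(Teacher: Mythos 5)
Your proposal follows essentially the same route as the paper: induction on $2g-2+n$, symmetrization of the cut-and-join equation, discarding the sub-leading $\bar Q_{d,m}$ contributions by the induction hypothesis and Corollary~\ref{AllHolomorphic}, and isolating a single distinguished term carrying $\restr_{w_1=z_0}\restr_{w_2=z_0}\Delta_{w_1}\Delta_{w_2}\tilde{\cW}_{g,2,n}$. One point needs correcting, though: the coefficient of that distinguished term is not a combinatorial constant obtained by a telescoping of binomial factors analogous to $(2N+2)(2N+1)$. In the leading stratum $m=r+1$, $d=0$, the term containing the $(g,n)$-level piece of $\tilde{\cW}_{g,r+1,n}$ forces the remaining $r-1$ variables $w_j$ onto $\tilde W_{0,1}=y$ factors, so the distinguished term appears multiplied by $\mc{S}_{z_0}\bigl(y(z_0)^{r-1}\bigr)$ --- a non-constant function --- and its non-vanishing at the ramification points is a geometric property of the specific spectral curve (for $y=z^q$ with $X=-z^{qr}+\log z$ the critical points of $X$ lie away from $z=0$, hence $y\neq 0$ there), not an identity among the $\alpha_j$'s. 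With that substitution your argument closes exactly as in the paper.
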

\begin{proof}
	As stated before, we use induction on the negative Euler characteristic.\par
	So assume the quadratic loop equation has been proved up to $ -\chi$, and consider the symmetrized cut-and-join equation for $ 2g-2 +n = -\chi +1$. All the sub-leading terms in the cut-and-join equation, i.e., those where $ \bar{Q}_{d,m}(z_0) $ gives a non-trivial contribution from $\frac{t}{\zeta(t)} \frac{\zeta(tD_{x(z_0)})}{tD_{x(z_0)}}$, are already holomorphic by the induction hypothesis, equation~\eqref{Sondiagonal}, and corollary~\ref{AllHolomorphic}. In the leading term, by the same corollary (cf. also~remark~\ref{rem:inductionEulerChar}), everything is holomorphic, except possibly for the terms involving
	\begin{equation}
	\restr_{w_1=z_0} 	\restr_{w_2=z_0} \Delta_{w_1} \Delta_{w_2} \tilde{\cW}(w_1,w_2 \mid z_{\llbracket n\rrbracket}) \cdot \mc{S}_{z_0} \left(y(z_0)^{r-1}\right)
	\end{equation}
	(as $ W_{0,1}(z_0) = y(z_0)$).\par
	Hence, this term must be holomorphic as well, and because $y(z)$ (and hence $\mc{S}_z y(z)$) is non-zero at branchpoints of $x$, this shows
	\begin{equation}
	\restr_{w_1=z_0} 	\restr_{w_2=z_0} \Delta_{w_1} \Delta_{w_2} \tilde{\cW}(w_1,w_2 \mid z_{\llbracket n\rrbracket}) \textup{ is holomorphic,}
	\end{equation}
	which is exactly the quadratic loop equation.
\end{proof}

\begin{remark}
Note that this proof generalizes the proofs of~\cite[theorems 14 \& 15]{SpecialCases}. In particular, proposition~\ref{AllDeltasHolomorphic} subsumes~\cite[lemma 16]{SpecialCases}, although the proof is different.
\end{remark}

\bibliographystyle{alpha}

\bibliography{rspinlib}

\newcommand{\etalchar}[1]{$^{#1}$}
\begin{thebibliography}{DBKO{\etalchar{+}}15}

\bibitem[ACEH18a]{AlChapuyProof}
A.~{Alexandrov}, G.~{Chapuy}, B.~{Eynard}, and J.~{Harnad}.
\newblock {Weighted Hurwitz numbers and topological recursion}.
\newblock {\em arXiv e-prints}, page arXiv:1806.09738, Jun 2018.

\bibitem[ACEH18b]{AlChapuy}
A.~Alexandrov, G.~Chapuy, B.~Eynard, and J.~Harnad.
\newblock Weighted {H}urwitz numbers and topological recursion: an overview.
\newblock {\em J. Math. Phys.}, 59(8):081102, 21, 2018.

\bibitem[AJ03]{AbramovichJarvis}
Dan Abramovich and Tyler~J. Jarvis.
\newblock Moduli of twisted spin curves.
\newblock {\em Proc. Amer. Math. Soc.}, 131(3):685--699, 2003.

\bibitem[Ale11]{Alexandrov}
A.~Alexandrov.
\newblock Matrix models for random partitions.
\newblock {\em Nucl. Phys. B}, 851(3):620--650, 2011.
\newblock hep-th/1005.5715.

\bibitem[ALS16]{ALS}
A.~Alexandrov, D.~Lewanski, and S.~Shadrin.
\newblock Ramifications of {H}urwitz theory, {KP} integrability and quantum
  curves.
\newblock {\em J. High Energy Phys.}, 2016(5):124, front matter+30, 2016.

\bibitem[BE13]{BouchardEynard}
Vincent Bouchard and Bertrand Eynard.
\newblock Think globally, compute locally.
\newblock {\em J. High Energy Phys.}, 2013(2):143, front matter + 34, 2013.

\bibitem[BEMS11]{BorotMulase}
Ga\"{e}tan Borot, Bertrand Eynard, Motohico Mulase, and Brad Safnuk.
\newblock A matrix model for simple {H}urwitz numbers, and topological
  recursion.
\newblock {\em J. Geom. Phys.}, 61(2):522--540, 2011.

\bibitem[BEO15]{BEO13}
Ga\"{e}tan Borot, Bertrand Eynard, and Nicolas Orantin.
\newblock Abstract loop equations, topological recursion and new applications.
\newblock {\em Commun. Number Theory Phys.}, 9(1):51--187, 2015.

\bibitem[BHSLM14]{BouchardMulase}
Vincent Bouchard, Daniel Hern\'{a}ndez~Serrano, Xiaojun Liu, and Motohico
  Mulase.
\newblock Mirror symmetry for orbifold {H}urwitz numbers.
\newblock {\em J. Differential Geom.}, 98(3):375--423, 2014.

\bibitem[BKL{\etalchar{+}}17]{SpecialCases}
Ga{\"e}tan {Borot}, Reinier {Kramer}, Danilo {Lewanski}, Alexandr {Popolitov},
  and Sergey {Shadrin}.
\newblock {Special cases of the orbifold version of Zvonkine's \$r\$-ELSV
  formula}.
\newblock {\em arXiv e-prints}, page arXiv:1705.10811, May 2017.

\bibitem[BMn08]{BouchardMarino}
Vincent Bouchard and Marcos Mari\~{n}o.
\newblock Hurwitz numbers, matrix models and enumerative geometry.
\newblock In {\em From {H}odge theory to integrability and {TQFT}
  tt*-geometry}, volume~78 of {\em Proc. Sympos. Pure Math.}, pages 263--283.
  Amer. Math. Soc., Providence, RI, 2008.

\bibitem[BS17]{BoSh15}
G.~Borot and S.~Shadrin.
\newblock Blobbed topological recursion: properties and applications.
\newblock {\em Math. Proc. Camb. Phil. Soc.}, 162(1):39--87, 2017.

\bibitem[CCC07]{CaporaseEtAl}
Lucia Caporaso, Cinzia Casagrande, and Maurizio Cornalba.
\newblock Moduli of roots of line bundles on curves.
\newblock {\em Trans. Amer. Math. Soc.}, 359(8):3733--3768, 2007.

\bibitem[CE06]{ChEy06}
Leonid Chekhov and Bertrand Eynard.
\newblock Matrix eigenvalue model: {F}eynman graph technique for all genera.
\newblock {\em J. High Energy Phys.}, (12):026, 29, 2006.

\bibitem[Chi08a]{Chiodo-stable-twisted}
Alessandro Chiodo.
\newblock Stable twisted curves and their {$r$}-spin structures.
\newblock {\em Ann. Inst. Fourier (Grenoble)}, 58(5):1635--1689, 2008.

\bibitem[Chi08b]{Chio08}
Alessandro Chiodo.
\newblock Towards an enumerative geometry of the moduli space of twisted curves
  and {$r$}th roots.
\newblock {\em Compos. Math.}, 144(6):1461--1496, 2008.

\bibitem[CJ18]{CladerJanda}
Emily Clader and Felix Janda.
\newblock Pixton's double ramification cycle relations.
\newblock {\em Geom. Topol.}, 22(2):1069--1108, 2018.

\bibitem[CR10]{ChiodoRuan}
Alessandro Chiodo and Yongbin Ruan.
\newblock Landau-{G}inzburg/{C}alabi-{Y}au correspondence for quintic
  three-folds via symplectic transformations.
\newblock {\em Invent. Math.}, 182(1):117--165, 2010.

\bibitem[DBKO{\etalchar{+}}15]{DKOSS13}
P.~Dunin-Barkowski, M.~Kazarian, N.~Orantin, S.~Shadrin, and L.~Spitz.
\newblock Polynomiality of {H}urwitz numbers, {B}ouchard-{M}ari\~{n}o
  conjecture, and a new proof of the {ELSV} formula.
\newblock {\em Adv. Math.}, 279:67--103, 2015.

\bibitem[DBLPS15]{DLPS}
P.~Dunin-Barkowski, D.~Lewanski, A.~Popolitov, and S.~Shadrin.
\newblock Polynomiality of orbifold {H}urwitz numbers, spectral curve, and a
  new proof of the {J}ohnson-{P}andharipande-{T}seng formula.
\newblock {\em J. Lond. Math. Soc. (2)}, 92(3):547--565, 2015.

\bibitem[DBOSS14]{DOSS14}
P.~Dunin-Barkowski, N.~Orantin, S.~Shadrin, and L.~Spitz.
\newblock Identification of the {G}ivental formula with the spectral curve
  topological recursion procedure.
\newblock {\em Comm. Math. Phys.}, 328(2):669--700, 2014.

\bibitem[DLN16]{Do-Orbifold}
Norman Do, Oliver Leigh, and Paul Norbury.
\newblock Orbifold {H}urwitz numbers and {E}ynard-{O}rantin invariants.
\newblock {\em Math. Res. Lett.}, 23(5):1281--1327, 2016.

\bibitem[DNO{\etalchar{+}}18]{DBNOPS-Primary}
P.~{Dunin-Barkowski}, P.~Norbury, N.~Orantin, A.~Popolitov, and S.~Shadrin.
\newblock Primary invariants of {H}urwitz {F}robenius manifolds.
\newblock In {\em Topological recursion and its influence in analysis,
  geometry, and topology}, volume 100 of {\em Proc. Sympos. Pure Math.}, pages
  297--331. Amer. Math. Soc., Providence, RI, 2018.

\bibitem[DNO{\etalchar{+}}19]{DBNOPS-Dubrovin}
P.~{Dunin-Barkowski}, P.~Norbury, N.~Orantin, A.~Popolitov, and S.~Shadrin.
\newblock Dubrovin's superpotential as a global spectral curve.
\newblock {\em J. Inst. Math. Jussieu}, 18(3):449--497, 2019.

\bibitem[DSS13]{DBSS-GivGraphs}
Petr {Dunin-Barkowski}, Sergey Shadrin, and Loek Spitz.
\newblock Givental graphs and inversion symmetry.
\newblock {\em Lett. Math. Phys.}, 103(5):533--557, 2013.

\bibitem[ELSV01]{ELSV01}
Torsten Ekedahl, Sergei Lando, Michael Shapiro, and Alek Vainshtein.
\newblock Hurwitz numbers and intersections on moduli spaces of curves.
\newblock {\em Invent. Math.}, 146(2):297--327, 2001.

\bibitem[EMS11]{EMS-Laplace}
Bertrand Eynard, Motohico Mulase, and Bradley Safnuk.
\newblock The {L}aplace transform of the cut-and-join equation and the
  {B}ouchard-{M}ari\~{n}o conjecture on {H}urwitz numbers.
\newblock {\em Publ. Res. Inst. Math. Sci.}, 47(2):629--670, 2011.

\bibitem[EO07]{EyOr07}
B.~Eynard and N.~Orantin.
\newblock Invariants of algebraic curves and topological expansion.
\newblock {\em Commun. Number Theory Phys.}, 1(2):347--452, 2007.

\bibitem[{Eyn}11]{Eyna11}
B.~{Eynard}.
\newblock {Intersection numbers of spectral curves}.
\newblock {\em arXiv e-prints}, page arXiv:1104.0176, Apr 2011.

\bibitem[Eyn14a]{Eynard2014}
B.~Eynard.
\newblock Invariants of spectral curves and intersection theory of moduli
  spaces of complex curves.
\newblock {\em Commun. Number Theory Phys.}, 8(3):541--588, 2014.

\bibitem[Eyn14b]{EynardSurvey}
Bertrand Eynard.
\newblock An overview of the topological recursion.
\newblock In {\em Proceedings of the {I}nternational {C}ongress of
  {M}athematicians---{S}eoul 2014. {V}ol. {III}}, pages 1063--1085. Kyung Moon
  Sa, Seoul, 2014.

\bibitem[GV03]{GraberVakil}
Tom Graber and Ravi Vakil.
\newblock Hodge integrals and {H}urwitz numbers via virtual localization.
\newblock {\em Compositio Math.}, 135(1):25--36, 2003.

\bibitem[Jar00]{Jarvis}
Tyler~J. Jarvis.
\newblock Geometry of the moduli of higher spin curves.
\newblock {\em Internat. J. Math.}, 11(5):637--663, 2000.

\bibitem[Joh15]{John15}
Paul Johnson.
\newblock Double {H}urwitz numbers via the infinite wedge.
\newblock {\em Trans. Amer. Math. Soc.}, 367(9):6415--6440, 2015.

\bibitem[JPPZ17]{JPPZ}
F.~Janda, R.~Pandharipande, A.~Pixton, and D.~Zvonkine.
\newblock Double ramification cycles on the moduli spaces of curves.
\newblock {\em Publ. Math. Inst. Hautes \'{E}tudes Sci.}, 125:221--266, 2017.

\bibitem[JPT11]{JohnsonPandharipandeTseng}
P.~Johnson, R.~Pandharipande, and H.-H. Tseng.
\newblock Abelian {H}urwitz-{H}odge integrals.
\newblock {\em Michigan Math. J.}, 60(1):171--198, 2011.

\bibitem[KLPS19]{KLPS17}
R.~Kramer, D.~Lewanski, A.~Popolitov, and S.~Shadrin.
\newblock Towards an orbifold generalization of {Z}vonkine's {$r$}-{ELSV}
  formula.
\newblock {\em Trans. Amer. Math. Soc.}, 372(6):4447--4469, 2019.

\bibitem[KLS19]{KLS}
Reinier Kramer, Danilo Lewanski, and Sergey Shadrin.
\newblock Quasi-polynomiality of monotone orbifold {H}urwitz numbers and
  {G}rothendieck's dessins d'enfants.
\newblock {\em Doc. Math.}, 24:857--898, 2019.

\bibitem[KM94]{KontsevichManin}
M.~Kontsevich and Yu. Manin.
\newblock Gromov-{W}itten classes, quantum cohomology, and enumerative
  geometry.
\newblock {\em Comm. Math. Phys.}, 164(3):525--562, 1994.

\bibitem[KO94]{KerovOlshanski1994}
S.~Kerov and G.~Olshanski.
\newblock Polynomial functions on the set of {Y}oung diagrams.
\newblock {\em C. R. Acad. Sci. Paris S\'er. I Math.}, 319(2):121--126, 1994.

\bibitem[{Lei}18]{Leigh}
Oliver {Leigh}.
\newblock {The Moduli Space of Stables Maps with Divisible Ramification}.
\newblock {\em arXiv e-prints}, page arXiv:1812.06933, Dec 2018.

\bibitem[Lew18]{LewanskiSurvey}
D.~Lewanski.
\newblock On {ELSV}-type formulae, {H}urwitz numbers and topological recursion.
\newblock In {\em Topological recursion and its influence in analysis,
  geometry, and topology}, volume 100 of {\em Proc. Sympos. Pure Math.}, pages
  517--532. Amer. Math. Soc., Providence, RI, 2018.

\bibitem[LPSZ17]{LPSZ-Chiodo}
Danilo Lewanski, Alexandr Popolitov, Sergey Shadrin, and Dimitri Zvonkine.
\newblock Chiodo formulas for the {$r$}-th roots and topological recursion.
\newblock {\em Lett. Math. Phys.}, 107(5):901--919, 2017.

\bibitem[MSS13]{MSS}
M.~Mulase, S.~Shadrin, and L.~Spitz.
\newblock The spectral curve and the {S}chr\"odinger equation of double
  {H}urwitz numbers and higher spin structures.
\newblock {\em Commun. Number Theory Phys.}, 7(1):125--143, 2013.

\bibitem[OP06]{OkPa06a}
A.~Okounkov and R.~Pandharipande.
\newblock Gromov-{W}itten theory, {H}urwitz theory, and completed cycles.
\newblock {\em Ann. of Math. (2)}, 163(2):517--560, 2006.

\bibitem[Ros08]{Rossi}
P.~Rossi.
\newblock Gromov-{W}itten invariants of target curves via symplectic field
  theory.
\newblock {\em J. Geom. Phys.}, 58(8):931--941, 2008.

\bibitem[SSZ12]{SSZ12}
S.~Shadrin, L.~Spitz, and D.~Zvonkine.
\newblock On double {H}urwitz numbers with completed cycles.
\newblock {\em J. Lond. Math. Soc. (2)}, 86(2):407--432, 2012.

\bibitem[SSZ15]{SSZ}
S.~Shadrin, L.~Spitz, and D.~Zvonkine.
\newblock Equivalence of {ELSV} and {B}ouchard-{M}ari\~{n}o conjectures for
  {$r$}-spin {H}urwitz numbers.
\newblock {\em Math. Ann.}, 361(3-4):611--645, 2015.

\bibitem[Vak03]{Vakil2003}
Ravi Vakil.
\newblock The moduli space of curves and its tautological ring.
\newblock {\em Notices Amer. Math. Soc.}, 50(6):647--658, 2003.

\bibitem[Vak08]{Vakil2008}
R.~Vakil.
\newblock The moduli space of curves and {G}romov-{W}itten theory.
\newblock In {\em Enumerative invariants in algebraic geometry and string
  theory}, volume 1947 of {\em Lecture Notes in Math.}, pages 143--198.
  Springer, Berlin, 2008.

\bibitem[Zvo06]{Zvonkine2006}
D.~Zvonkine.
\newblock A preliminary text on the \texorpdfstring{\( r\)}{r}-{ELSV} formula.
\newblock {\em Preprint}, 2006.

\bibitem[Zvo12]{Zvonkine2012}
Dimitri Zvonkine.
\newblock An introduction to moduli spaces of curves and their intersection
  theory.
\newblock In {\em Handbook of {T}eichm{\"u}ller theory. {V}olume {III}},
  volume~17 of {\em IRMA Lect. Math. Theor. Phys.}, pages 667--716. Eur. Math.
  Soc., Z{\"u}rich, 2012.

\end{thebibliography}

\end{document}